\numberwithin{equation}{section}
\numberwithin{figure}{section}
\theoremstyle{plain}
\newtheorem{thm}{\protect\theoremname}[section]
\theoremstyle{plain}
\newtheorem{proposition}[thm]{Proposition}
\newtheorem{corollary}[thm]{Corollary}
\theoremstyle{definition}
\theoremstyle{plain}
\newtheorem{lem}[thm]{\protect\lemmaname}
\newtheorem{lemma}[thm]{Lemma}
\theoremstyle{remark}
\newtheorem{rem}[thm]{\protect\remarkname}
\theoremstyle{definition}
\newtheorem{definition}[thm]{Definition}
\newtheorem*{claim*}{Claim}
\theoremstyle{remark}
\newtheorem{remark}[thm]{Remark}
\theoremstyle{definition}
\newtheorem*{defn*}{\protect\definitionname}
\providecommand{\definitionname}{Definition}
\providecommand{\lemmaname}{Lemma}
\providecommand{\propositionname}{Proposition}
\providecommand{\remarkname}{Remark}
\providecommand{\theoremname}{Theorem}
\newcommand{\Rmnum}[1]{\expandafter\@slowromancap\romannumeral #1@}
\newcommand{\real}{{\mathbb R}}
\newcommand{\nat}{{\mathbb N}}
\newcommand{\ent}{{\mathbb Z}}
\newcommand{\com}{{\mathbb C}}
\newcommand{\ce}{{\mathbb E}}
\newcommand{\A}{{\mathcal A}}
\newcommand{\B}{{\mathcal B}}
\newcommand{\D}{{\mathscr{D}}}
\newcommand{\F}{{\mathcal F}}
\renewcommand{\H}{ {\mathcal H} }
\renewcommand{\L}{{\mathcal L}}
\newcommand{\M}{{\mathcal M}}
\newcommand{\8}{\infty}
\newcommand{\la}{\langle}
\newcommand{\ra}{\rangle}
\newcommand{\be}{\begin{eqnarray*}}
\newcommand{\ee}{\end{eqnarray*}}
\newcommand{\beq}{\begin{equation}}
\newcommand{\eeq}{\end{equation}}
\newcommand{\beqn}{\begin{equation*}}
\newcommand{\eeqn}{\end{equation*}}
\newcommand{\bs}{\begin{split}}
\newcommand{\es}{\end{split}}
\newcommand\norm[1]{ \left\| #1 \right\| }
\newcommand\jdz[1]{ \left| #1 \right| }
\newcommand\sk[1]{ \left( #1 \right) }
\newcommand\mk[1]{ \left[ #1 \right] }
\newcommand\lk[1]{ \left\{ #1 \right\} }
\newcommand\supp{\text{supp}}
\renewcommand\st[1]{}
\begin{document}
	
\title[Multiplication between elements in Martingale Hardy spaces and their duals]{Multiplication between elements in Martingale Hardy spaces and their dual spaces}

\thanks{{\it 2020 Mathematics Subject Classification.} Primary: 47A07, 60G42, 60G46. Secondary:  42B30, 46E30, 46F10}
\thanks{{\it Key words:} Paraproducts; Martingales; Hardy--Orlicz spaces; Musielak--Orlicz spaces; Doubling spaces}

\author[O. Bakas]{Odysseas Bakas} 
\address[O.\ Bakas]{BCAM - Basque Center for Applied Mathematics, 48009 Bilbao, Spain}
\email{obakas@bcamath.org}
\thanks{O. Bakas is partially supported by the projects CEX2021-001142-S, RYC2018-025477-I, PID2021-122156NB-I00/AEI/10.13039/501100011033 funded by Agencia Estatal de Investigaci\'on and acronym ``HAMIP'', Juan de la Cierva Incorporaci\'on IJC2020-043082-I and grant BERC 2022-2025 of the Basque Government.}

\author[Z. Xu]{Zhendong Xu}
\address[Z. Xu]{Laboratoire de Math{\'e}matiques, Universit{\'e} de Bourgogne Franche-Comt{\'e}, 25030 Besan\c{c}on Cedex, France}
\email{xu.zhendong@univ-fcomte.fr}

\author[Y. Zhai]{Yujia Zhai}
\address[Y. Zhai]{School of Mathematical and Statistical Sciences, Clemson University, 29634 South Carolina, USA}
\email{zhai@clemson.edu}
\thanks{Y. Zhai acknowledges partial support from ERC project FAnFArE no. 637510 and the region Pays de la Loire.}

\author[H. Zhang]{Hao Zhang}
\address[H. Zhang]{Laboratoire de Math{\'e}matiques, Universit{\'e} de Bourgogne Franche-Comt{\'e}, 25030 Besan\c{c}on Cedex, France}
\email{hao.zhang@univ-fcomte.fr}


\maketitle

\begin{abstract} In this paper, we establish continuous bilinear decompositions that arise in the study of products between elements in martingale Hardy spaces $ H^p\ (0<p\leqslant 1) $ and functions in their dual spaces. Our decompositions are based on martingale paraproducts. As a consequence of our work, we also obtain analogous results for dyadic martingales on spaces of homogeneous type equipped with a doubling measure.
\end{abstract}


\section{Introduction}\label{Introduction}

The pointwise product of a function in the classical Hardy space $H^1(\mathbb{R}^n)$ and a function of bounded mean oscillation on $\mathbb{R}^n$ need not be in $L^1(\mathbb{R}^n)$; see e.g. \S 6.2 in Chapter IV in \cite{Stein}. However, using Fefferman's duality theorem \cite{F} and the fact that the pointwise product of a $BMO$-function and a $C_0^{\infty}$-function is in $BMO(\mathbb{R}^n)$, Bonami, Iwaniec, Jones and Zinsmeister defined in \cite{Bo1} the product $f \times g$ of a function $f \in H^1(\mathbb{R}^n)$ and a function $g \in BMO(\mathbb{R}^n)$ as a distribution given by
\begin{equation}\label{def_prod}
 \langle f \times g, \phi \rangle := \langle g \cdot \phi, f  \rangle, \quad \phi \in C_0^{\infty} (\mathbb{R}^n) ,
\end{equation}
where in the right-hand side of \eqref{def_prod} the duality between  $f \in H^1(\mathbb{R}^n)$ and $g \cdot \phi \in BMO(\mathbb{R}^n)$ is employed. Moreover, it is shown in \cite{Bo1} that for any fixed $f \in H^1(\mathbb{R}^n)$ there exist two linear continuous operators $S_f$ from $BMO(\mathbb{R}^n)$ to $L^1(\mathbb{R}^n)$ and $T_f$ from $BMO (\mathbb{R}^n)$ to a weighted Hardy--Orlicz space such that
$$  f \times g= S_f(g)+T_f(g)  $$
for all  $g\in BMO(\real^n)$; see \cite[Theorem 1.6]{Bo1}.

In \cite{Bo}, using wavelet analysis, Bonami, Grellier and Ky showed that there exist two bilinear continuous operators $S$ from $H^1(\mathbb{R}^n)\times BMO(\mathbb{R}^n)$ to $L^1(\mathbb{R}^n)$ and $T$ from $H^1(\mathbb{R}^n)\times BMO(\mathbb{R}^n)$ to $ H^{\log}(\real^n) $ such that
$$ f \times g= S(f,g)+T(f,g)  $$
 for all  $ f \in H^1(\real^n) $ and for all $g\in BMO(\real^n)$; see \cite[Theorem 1.1]{Bo}.  The Musielak Hardy--Orlicz space $H^{\log}(\real^n) $ is defined as the class consisting of all distributions $ h $ on $\mathbb{R}^n$ whose grand maximal function $ \M h $ satisfies
$$
\int_{\real^n} \frac{|\M h (x)|}{\log(e+|x|)+\log(e+|\M h (x)|)} dx < \infty
$$
and is smaller than the weighted Hardy--Orlicz space appearing in \cite{Bo1}. In fact, as explained in \cite{Bo1}, in view of the results of Nakai and Yabuta \cite{NY} on pointwise multipliers of $BMO(\real^n)$ and duality, the Musielak Hardy--Orlicz space $H^{\log}(\real^n) $ is optimal.

In addition, continuous bilinear decomposition theorems for products of elements in $H^p(\real^n)$, for $0<p<1$,  and their dual spaces were established in \cite{BoY}.  

Using the theory of wavelets on spaces of homogeneous type, which was developed by  Auscher and Hyt\"onen in \cite{AH}, the aforementioned results have been extended to spaces of homogeneous type by Liu, Yang and Yuan \cite{Liu} and  Xing, Yang and  Liang \cite{Xi}. More precisely, in \cite{Liu} and \cite{Xi}, continuous bilinear decompositions for products between elements in atomic Hardy spaces $H^p_{\rm at}(\Omega)$ (in the sense of Coifman and Weiss \cite{Co}) and their dual spaces were established in the case where $p\in (\frac{n}{n+1}, 1]$. Here $n$ is defined as the dimension of the homogeneous space $\Omega$.  

Recently, in \cite{BPRS}, a dyadic variant of the aforementioned results of Bonami, Grellier, and Ky was established; see \cite[Theorem 24]{BPRS}, which in turn was used to deduce a periodic version of \cite[Theorem 1.1]{Bo}; see \cite[Theorem 28]{BPRS}.

Motivated by \cite{BPRS}, the first part of this article is concerned with the study of multiplication between Hardy spaces and their dual spaces for martingales on a probability space $\Omega$. More specifically, we study multiplications between functions in the martingale Hardy space $H^1(\Omega)$ and its dual space $BMO(\Omega)$ as stated in our first result, Theorem \ref{Theorem A}. We also investigate the case $0<p<1$, namely multiplication between elements in $H^p(\Omega)$ and their dual spaces, the so-called martingale Lipschitz spaces $ \Lambda_1(\alpha_p)$ with $ \alpha_p := \frac{1}{p} -1 $, see Theorem \ref{Theorem C}. Since the dual space $(H^p(\Omega))^*$ could be $\{0\}$ for some irregular martingales, we shall only consider regular martingales where every $\sigma-$algebra $\F_k$ in the corresponding filtration is generated by countably many atoms.

We would like to mention that Yong Jiao, Guangheng Xie, Dachun Yang, and Dejian Zhou have independently obtained Theorem \ref{Theorem A}, and derived from it interesting applications on the boundedness of operators involving commutators in \cite{JXYZ}.

\begin{thm}\label{Theorem A} Let $(\Omega, \F, P)$ be a probability space equipped with the filtration $\{\F_k\}_{k \ge 1}$. 
	
There exist continuous bilinear operators $\Pi_1:H^1(\Omega)\times BMO(\Omega) \to L^1(\Omega), \ \Pi_2:H^1(\Omega)\times BMO(\Omega) \to H^1(\Omega)$ and $ \Pi_3:H^1(\Omega)\times BMO(\Omega) \to H^{\Phi}(\Omega) $  such that
\begin{equation*}\label{A}
  f\cdot g = \Pi_1(f,g)+\Pi_2(f,g) + \Pi_3(f,g)
\end{equation*}
for all $ f\in H^1(\Omega)$ and $g\in BMO(\Omega)$,  
where $f\cdot g$ is in the sense of the pointwise multiplication.
\end{thm}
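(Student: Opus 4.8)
The starting point is the algebraic \emph{summation by parts} identity for the product of two martingales. Writing $f_k=\ce(f\,|\,\F_k)$, $g_k=\ce(g\,|\,\F_k)$ and $d_kf=f_k-f_{k-1}$ for the martingale differences (with the convention $f_0=g_0=0$), the telescoping of $f_ng_n$ together with the pointwise identity $f_kg_k-f_{k-1}g_{k-1}=(d_kf)g_{k-1}+f_{k-1}(d_kg)+(d_kf)(d_kg)$ yields, after passing to the limit $n\to\infty$,
\begin{equation*}
 f\cdot g=\sum_k (d_kf)(d_kg)+\sum_k f_{k-1}\,d_kg+\sum_k g_{k-1}\,d_kf .
\end{equation*}
I would therefore \emph{define} $\Pi_1(f,g)=\sum_k (d_kf)(d_kg)$, $\Pi_2(f,g)=\sum_k f_{k-1}\,d_kg$ and $\Pi_3(f,g)=\sum_k g_{k-1}\,d_kf$, which are manifestly bilinear, so that the whole problem reduces to identifying the target space of each paraproduct. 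Note that $\Pi_2$ and $\Pi_3$ are themselves martingales, each summand being a martingale difference, whereas $\Pi_1$ is not, since the conditional covariances $\ce((d_kf)(d_kg)\,|\,\F_{k-1})$ need not vanish; hence only $\Pi_1$ has a chance of being merely an $L^1$ object, while $\Pi_2$ and $\Pi_3$ should be treated through their square functions. The convergence of the three series and the a.e.\ validity of the identity is where I would invoke the regularity hypothesis, each $\F_k$ being generated by countably many atoms, so that the atomic decomposition of $H^1(\Omega)$ is at my disposal.

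For the two ``good'' terms I would argue through the atomic decomposition $f=\sum_j\lambda_j a_j$, $\sum_j|\lambda_j|\approx\|f\|_{H^1}$, where $a_j$ is supported on an atom $A_j\in\F_{m_j}$ with $\ce(a_j\,|\,\F_{m_j})=0$ and $\|a_j\|_2\le P(A_j)^{-1/2}$. The single clean estimate driving everything is the Carleson-type bound $\ce\big(\sum_{k>m}|d_kg|^2\,\big|\,\F_m\big)\le\|g\|_{BMO}^2$. For $\Pi_1$, since $(d_ka)(d_kg)$ is supported on $A$ and vanishes for $k\le m$, Cauchy--Schwarz in $k$ gives the pointwise bound $|\Pi_1(a,g)|\le Sa\cdot(\sum_{k>m}|d_kg|^2)^{1/2}$; integrating over $A$ and applying Cauchy--Schwarz together with Doob's inequality and the Carleson bound gives $\|\Pi_1(a,g)\|_{L^1}\lesssim\|g\|_{BMO}$ uniformly in the atom, whence $\Pi_1\colon H^1\times BMO\to L^1$. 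For $\Pi_2$ the same computation applied to the square function, $S(\Pi_2(a,g))\le a^{\ast}\cdot(\sum_{k>m}|d_kg|^2)^{1/2}$ with $a^{\ast}=\sup_k|a_{k-1}|$, gives $\|\Pi_2(a,g)\|_{H^1}=\|S(\Pi_2(a,g))\|_{L^1}\lesssim\|g\|_{BMO}$, whence $\Pi_2\colon H^1\times BMO\to H^1$. Summing over $j$ produces the asserted bilinear bounds; that $\Pi_1,\Pi_2$ are well defined modulo constants in $g$, so that a $BMO$ bound is even possible, is visible from $d_k(g+c)=d_kg$.

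The heart of the matter is $\Pi_3(f,g)=\sum_k g_{k-1}\,d_kf$, a martingale transform of $f$ by the \emph{unbounded} predictable sequence $(g_{k-1})$; this is precisely the term that fails to be in $L^1$ and must be placed in the larger space $H^{\Phi}(\Omega)$. On the support $A=A_j$ of an atom I would freeze the local average $\bar g_A:=\ce(g\,|\,\F_m)|_A$ and write
\begin{equation*}
 \Pi_3(a,g)=\bar g_A\,a+\sum_{k>m}(g_{k-1}-\bar g_A)\,d_ka .
\end{equation*}
The remainder is again controlled in $H^1$: its square function is dominated by $\big(\sup_k|g_{k-1}-\bar g_A|\big)\,Sa$, and John--Nirenberg, namely the fact that the localised oscillation of a $BMO$ martingale lies in every $L^p(A)$ with the expected bound, yields $\big\|\sum_{k>m}(g_{k-1}-\bar g_A)\,d_ka\big\|_{H^1}\lesssim\|g\|_{BMO}$. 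The genuinely delicate contribution is the rank-one piece $\bar g_A\,a$: the constant $\bar g_A$ cannot be controlled by $\|g\|_{BMO}$ alone, and John--Nirenberg furnishes only the logarithmic bound $|\bar g_A|\lesssim\|g\|_{BMO}\,\log\!\big(e+P(A)^{-1}\big)$.

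Consequently $\Pi_3(f,g)$ is, up to an $H^1$ error, a sum $\sum_j\lambda_j\bar g_{A_j}a_j$ of atoms amplified by logarithmic factors, and the crux, which I would isolate as the main lemma, is the Hardy--Orlicz estimate that for atoms $a_j$ on $A_j$ one has $\big\|\sum_j\lambda_j\log(e+P(A_j)^{-1})\,a_j\big\|_{H^{\Phi}}\lesssim\sum_j|\lambda_j|$. This is exactly calibrated to $\Phi(t)=t/\log(e+t)$: a single atom $a$ on $A$ amplified by $\log(e+P(A)^{-1})$ satisfies $\int_A\Phi\big(\log(e+P(A)^{-1})\,|a|\big)\approx P(A)$, so the logarithmic gain of $\Phi$ precisely compensates the amplification. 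Making this summation rigorous, that is, converting the atom-by-atom information into a bound on the Luxemburg norm of the subadditive Orlicz functional and controlling the maximal (or square) function of the full series rather than of each atom separately, is the step I expect to be the main obstacle, and is where the Hardy--Orlicz space theory for $H^{\Phi}(\Omega)$ enters in an essential way.
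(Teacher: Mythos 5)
Your decomposition $f\cdot g=\Pi_1(f,g)+\Pi_2(f,g)+\Pi_3(f,g)$ is exactly the paper's, and your estimates for $\Pi_1$ and $\Pi_2$ on individual atoms (localization on the support, Cauchy--Schwarz in $k$, then the Carleson-type bound $\ce\big(\sum_{k>m}|d_kg|^2\,\big|\,\F_m\big)\le\|g\|_{BMO(\Omega)}^2$) coincide with what the paper does. The first genuine gap is structural: Theorem \ref{Theorem A} carries \emph{no} regularity or atomicity hypothesis on the filtration, yet you ``invoke the regularity hypothesis, each $\F_k$ being generated by countably many atoms, so that the atomic decomposition of $H^1(\Omega)$ is at my disposal.'' For a general filtration $H^1(\Omega)\neq h^1(\Omega)$, and the square-function space $H^1(\Omega)$ does \emph{not} admit a simple-atom decomposition with $\ell^1$ coefficients; only the conditional-square-function space $h^1(\Omega)=H^{1,2}_{\rm at}(\Omega)$ does. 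The paper bridges this by the Davis-type decomposition $H^1(\Omega)=h^1(\Omega)+h^1_d(\Omega)$ (Theorem \ref{dec of H}), decomposes only the $h^1$ part into atoms, and treats the diagonal part $f^d\in h^1_d(\Omega)$ separately in each $\Pi_i$, using crucially that $\sup_k\|d_kg\|_\infty\lesssim\|g\|_{BMO(\Omega)}$ (Proposition \ref{Bb}). Your argument says nothing about this diagonal component, so as written it proves the theorem only for regular filtrations, which is a strictly weaker statement.

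The second gap is in the heart of your treatment of $\Pi_3$. First, $\bar g_A:=\ce(g\,|\,\F_m)|_A$ is in general a \emph{function}, not a constant: the supports $A$ produced by the atomic decomposition are merely $\F_m$-measurable sets (stopping-time level sets), not atoms of $\F_m$, so the ``rank-one piece $\bar g_A\,a$'' is not of the form (constant)$\times$(atom). Second, the bound $|\bar g_A|\lesssim\|g\|_{BMO}\log(e+P(A)^{-1})$ is a dyadic heuristic: it comes from $|g_m|\le\sum_{k\le m}\|d_kg\|_\infty\le m\,\|g\|_{BMO}$ together with $m\approx\log P(A)^{-1}$, and the latter identification fails for general filtrations (the index $m$ bears no relation to $P(A)$ without geometric decay of the measures of atoms). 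Third, even granting that bound, your ``main lemma'' $\big\|\sum_j\lambda_j\log(e+P(A_j)^{-1})a_j\big\|_{H^{\Phi}}\lesssim\sum_j|\lambda_j|$ is exactly the step you leave unproven and identify as the main obstacle, so the proposal is incomplete even in the dyadic setting. The paper circumvents all three issues at once: it never freezes $g$ at time $m$ but instead isolates the term
\begin{equation*}
I_3=\Big(\sum_j|\lambda_j|\,1_{A_{n_j}}S(a^j)\Big)\cdot|g|,
\end{equation*}
whose first factor is in $L^1(\Omega)$ with norm $\lesssim\|f^h\|_{h^1(\Omega)}$, and applies the generalized H\"older inequality $\|h\cdot g\|_{L^\Phi(\Omega)}\lesssim\|h\|_1\|g\|_{BMO(\Omega)}$ (Lemma \ref{HolderP}); this is precisely where the John--Nirenberg exponential integrability is consumed, with no log-amplified atomic summation lemma needed. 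An even shorter route, given in \S\ref{new_proof_Pi_3}, avoids atoms for $\Pi_3$ entirely: $S(\Pi_3(f,g))\le g^*S(f)$, and since $\|g^*\|_{BMO(\Omega)}\lesssim\|g\|_{BMO(\Omega)}$ (Theorem \ref{mBMO}), Lemma \ref{HolderP} applied to $|g^*-(g^*)_0|\,S(f)$ finishes the proof. You would need to either adopt one of these two mechanisms or supply a proof of your lemma (and restrict to regular filtrations) for your argument to close.
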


In Theorem \ref{Theorem A}, $H^\Phi(\Omega)$ is a martingale Hardy--Orlicz space defined in terms of the growth function $\Phi (t) $; see Definition \ref{def_M-H-O} and \eqref{Phi}  below. We shall refer to the terms $\Pi_2(f,g)$ and $\Pi_3(f,g)$ as the martingale paraproducts. 

Theorem \ref{Theorem A} can be regarded as an extension of \cite[Theorem 24]{BPRS} to the general case of martingales.

For $0<p<1$, if $f\in H^p(\Omega)$, $g\in \Lambda_1(\alpha_p)$ and $f_0=g_0=0$, then their product can be regarded as a continuous linear functional on $L^\8(\Omega)\cap \Lambda_1(\alpha_p)$. To be more precise, for any $h\in L^\8(\Omega)\cap \Lambda_1(\alpha_p)$, define
$$ 
  \la f \times g, h\ra := \la h\cdot g, f\ra ,  
$$
where in the right-hand side the duality between $H^p(\Omega)$ and $\Lambda_1(\alpha_p)$ is invoked. Note that $h\cdot g$ belongs to $\Lambda_1(\alpha_p)$ since $h$ is a pointwise multiplier on $\Lambda_1(\alpha_p)$ (see \cite{NS}).

Our following theorem establishes a continuous bilinear decomposition for products between  elements in $H^p(\Omega)$ and functions in the dual space $\Lambda_1(\alpha_p)$ when $0<p<1$.

\begin{thm}\label{Theorem C}
Let $(\Omega, \F, P)$ be a probability space equipped with the filtration $\{\F_k\}_{k \ge 1}$, where $\F_k$ is generated by countably many atoms for any $k \ge 1$. 

If $H^p(\Omega)\ (0<p<1)$ are martingale Hardy spaces, then there exist continuous bilinear operators $\Pi_1:H^p(\Omega)\times \Lambda_1(\alpha_p) \to L^1(\Omega),\ \Pi_2:H^p(\Omega)\times \Lambda_1(\alpha_p) \to H^1(\Omega)$ and $ \Pi_3:H^p(\Omega)\times \Lambda_1(\alpha_p) \to H^{p}(\Omega) $   such that
\begin{equation*}
f \times g = \Pi_1(f,g)+\Pi_2(f,g) + \Pi_3(f,g) 
\end{equation*}
for all $ f\in H^p(\Omega) $ and $ g\in \Lambda_1(\alpha_p)$.
\end{thm}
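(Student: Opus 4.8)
The plan is to exhibit the three operators explicitly as martingale paraproducts and to analyse each piece of the product separately. Writing $f_k=\ce_kf$, $d_kf=f_k-f_{k-1}$ and likewise for $g$, and using $f_0=g_0=0$, the telescoping identity $f_ng_n=\sum_{k=1}^n(f_kg_k-f_{k-1}g_{k-1})$ yields the formal decomposition
\[
f\times g=\sum_k d_kf\,d_kg+\sum_k f_{k-1}\,d_kg+\sum_k g_{k-1}\,d_kf=:\Pi_1(f,g)+\Pi_2(f,g)+\Pi_3(f,g).
\]
I would take these as the definitions of $\Pi_1,\Pi_2,\Pi_3$ and establish the mapping properties $\Pi_1\colon\to L^1$, $\Pi_2\colon\to H^1$ and $\Pi_3\colon\to H^p$ one at a time. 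Note that $f_{k-1}d_kg$ and $g_{k-1}d_kf$ are themselves martingale differences, so $\Pi_2(f,g)$ and $\Pi_3(f,g)$ are martingales, while $\Pi_1(f,g)$ is the diagonal part.

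Since each $\F_k$ is generated by countably many atoms, I would invoke the atomic decomposition $f=\sum_i\lambda_ia_i$ with $\sum_i|\lambda_i|^p\lesssim\|f\|_{H^p}^p$, each $a_i$ a $(p,\infty)$-atom supported on an atom $B_i\in\F_{s_i}$ with $\ce_{s_i}a_i=0$ and $\|a_i\|_\infty\le P(B_i)^{-1/p}$. As the operators are linear in $f$, the $p$-subadditivity of $\|\cdot\|_{H^p}$ (and the embedding $\|\cdot\|_{\ell^1}\le\|\cdot\|_{\ell^p}$, valid for $0<p\le1$, for the $L^1$ and $H^1$ targets) reduces the claim to uniform single-atom estimates $\|\Pi_j(a,g)\|_{X_j}\lesssim\|g\|_{\Lambda_1(\alpha_p)}$. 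The key analytic input is the Campanato description of the dual space: for every atom $A\in\F_n$,
\[
\Big(\int_A\sum_{k>n}|d_kg|^2\,dP\Big)^{1/2}=\Big(\int_A|g-g_n|^2\,dP\Big)^{1/2}\lesssim P(A)^{\alpha_p+1/2}\,\|g\|_{\Lambda_1(\alpha_p)}.
\]
Setting $S(h)=(\sum_k|d_kh|^2)^{1/2}$ and $S_s(g)=(\sum_{k>s}|d_kg|^2)^{1/2}$, the first two estimates follow: for an atom $a$ on $B\in\F_s$ one has $\|\Pi_1(a,g)\|_1\le\|S(a)\|_2\,\|S_s(g)\mathbf 1_B\|_2\lesssim P(B)^{1/2-1/p}P(B)^{\alpha_p+1/2}\|g\|=\|g\|$ (as $1/2-1/p+\alpha_p+1/2=0$), and, since $\|\ce_{k-1}a\|_\infty\le P(B)^{-1/p}$ gives $S(\Pi_2(a,g))\le P(B)^{-1/p}S_s(g)\mathbf 1_B$, also $\|\Pi_2(a,g)\|_{H^1}\approx\|S(\Pi_2(a,g))\|_1\lesssim P(B)^{-1/p}P(B)^{\alpha_p+1}\|g\|=\|g\|$.

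The main obstacle is the third estimate $\Pi_3(a,g)=\sum_{k>s}g_{k-1}d_ka\in H^p$, since the multipliers $g_{k-1}$ are unbounded. I would split $g_{k-1}=g_s+(g_{k-1}-g_s)$, writing $\Pi_3(a,g)=\sum_{k>s}(g_{k-1}-g_s)d_ka+g_s\,a$; the oscillation piece has square function $(\sum_{k>s}|g_{k-1}-g_s|^2|d_ka|^2)^{1/2}$, controlled atom by atom via the Lipschitz regularity of $g$ so as to land in $H^p$, while the scalar piece $g_s\,a$, a multiple of the atom $a$, must be dominated using $g_0=0$ and the nesting of the atoms of the filtration. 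A cleaner alternative is to argue by $H^p$--$\Lambda_1(\alpha_p)$ duality: since $d_k\Pi_3(a,g)=g_{k-1}d_ka$, for $h\in\Lambda_1(\alpha_p)$ one has $\langle\Pi_3(a,g),h\rangle=\langle a,\sum_k g_{k-1}d_kh\rangle$, so the $H^p$ bound for $\Pi_3$ is dual to the boundedness of the paraproduct $h\mapsto\sum_k g_{k-1}d_kh$ on $\Lambda_1(\alpha_p)$, which rests on the pointwise-multiplier theory of \cite{NS}. Either way the homogeneity exponent $\alpha_p=\frac1p-1$ enters decisively here.

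Finally, I would justify that the decomposition holds as an identity of functionals on $L^\infty(\Omega)\cap\Lambda_1(\alpha_p)$: testing the three series against $h$ and using $\langle f\times g,h\rangle=\langle h\cdot g,f\rangle$ together with the $H^p$--$\Lambda_1(\alpha_p)$ pairing shows that $\Pi_1+\Pi_2+\Pi_3$ reconstructs $f\times g$, the convergence of the defining series being guaranteed by the boundedness just established.
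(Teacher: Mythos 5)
Your decomposition into the three paraproducts, the reduction to $(p,\infty)$-atoms, and the estimates for $\Pi_1$ and $\Pi_2$ are correct and essentially the same as the paper's. The genuine gap is in $\Pi_3$, and it stems from a wrong premise: you assert that ``the multipliers $g_{k-1}$ are unbounded'', whereas the key point of the paper (Corollary \ref{lam1}, proved from the embedding $L^1(\Omega)\subset H^p_m(\Omega)$ of Lemma \ref{lem4.2} together with the duality $(H^p(\Omega))^*=\Lambda_1(\alpha_p)$) is that on a probability space every $g\in\Lambda_1(\alpha_p)$ with $g_0=0$ \emph{is} bounded, with $\|g\|_\infty\lesssim_p\|g\|_{\Lambda_1(\alpha_p)}$; this is exactly where $\alpha_p=\frac1p-1>0$ enters. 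Granting this, $\Pi_3$ becomes the easiest term and needs no atoms at all: $S(\Pi_3(f,g))\le g^*S(f)\le\|g\|_\infty S(f)$, hence $\|\Pi_3(f,g)\|_{H^p(\Omega)}\lesssim_p\|f\|_{H^p(\Omega)}\|g\|_{\Lambda_1(\alpha_p)}$. Your first route (splitting $g_{k-1}=g_s+(g_{k-1}-g_s)$) could be completed, but the term $g_s\,a$ that you defer to ``$g_0=0$ and the nesting of the atoms of the filtration'' is precisely where the work lies: naive telescoping along the chain of atoms $A_0\supset A_1\supset\cdots\supset B$ gives a bound of the form $\sum_j P(A_j)^{\alpha_p}\|g\|_{\Lambda_1(\alpha_p)}$, which diverges when the atoms shrink slowly (e.g.\ $P(A_j)\approx j^{-1}$ with $p\ge\frac12$); to repair it one needs either the boundedness statement above, or regularity of the filtration plus a regrouping of levels by dyadic scales of measure. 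As written, that step is a placeholder, not a proof.

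Your ``cleaner alternative'' via duality does not work at all for $0<p<1$: $H^p(\Omega)$ is only a quasi-Banach space, and the quantity $\sup_h|\langle x,h\rangle|/\|h\|_{\Lambda_1(\alpha_p)}$ computes the norm of the Banach envelope of $H^p(\Omega)$, which is strictly weaker than $\|x\|_{H^p(\Omega)}$. So even a proof that $h\mapsto\sum_k g_{k-1}d_kh$ is bounded on $\Lambda_1(\alpha_p)$ would not yield the required quasi-norm bound $\|\Pi_3(a,g)\|_{H^p(\Omega)}\lesssim\|g\|_{\Lambda_1(\alpha_p)}$ --- testing against the dual cannot detect membership in $H^p$. (Separately, $\sum_k g_{k-1}d_kh$ is a paraproduct, not a pointwise product, so the pointwise-multiplier results of Nakai--Sadasue that you invoke do not cover it.) Finally, note that your atomic decomposition into $(p,\infty)$-atoms and the identification $H^p(\Omega)=h^p(\Omega)=H^{p,\infty}_{\rm at}(\Omega)$ require the filtration to be \emph{regular}, which is the paper's standing assumption in this section, not merely that each $\F_k$ is generated by countably many atoms.
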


In the second part of this paper, we study analogues of Theorems \ref{Theorem A} and \ref{Theorem C} for the case of dyadic martingales on spaces of homogeneous type. Such martingales were first constructed in \cite{Hy}. We investigate the corresponding martingale Hardy spaces and extend Mei's results in \cite{MT} to this general setting. 
Compared with the probability setting, the case of spaces of homogeneous type is more difficult to deal with since backward martingales arise, and the underlying measures on homogeneous spaces may be infinite.

The present paper is organized as follows. In section \ref{Preliminaries}, we set down notation and give some background on martingale Hardy--Orlicz spaces. 
In section \ref{case1}, we prove Theorem \ref{Theorem A}. In section \ref{case2}, we present a characterization of martingale Lipschitz spaces $\Lambda_{1}(\alpha_p)$, which is of independent interest  (see Theorem \ref{thm4.5} and Remark \ref{rmk_Lip} below), and then we show Theorem \ref{Theorem C}. The remaining sections are concerned with spaces of homogeneous type. For the convenience of the reader, in section \ref{case3}, we recall some definitions and facts regarding Hardy spaces and Lipschitz spaces on spaces of homogeneous type in the sense of Coifman and Weiss \cite{Co}. In section \ref{section6}, we give some new proofs of results in \cite{Co} based on martingale methods and the existence of dyadic martingales on homogeneous spaces. In section \ref{section7}, we establish analogues of Theorems \ref{Theorem A} and  \ref{Theorem C}  for dyadic martingales on spaces of homogeneous type; see Theorem \ref{thm7.5} below. In the last section, we apply Theorem \ref{thm7.5} to obtain a decomposition of products of functions in Hardy spaces and their dual spaces on spaces of homogeneous type.

\section{Notation and Background}\label{Preliminaries}

In this section, we provide some notation and background that will be used in this paper. 

\subsection{Notation}\label{notation}

In several parts of this paper, we  consider sums and intersections of quasi-normed vector spaces. For the convenience of the reader we recall these notions below.
 
\begin{definition}
Let $(X_1, \|\cdot\|_{X_1} ), (X_2, \|\cdot\|_{X_2})$ be two quasi-normed vector spaces and let $X$ be a topological vector space $X$ such that $X_1, X_2 \subset X$ continuously.
	
\begin{enumerate}
\item  $(X_1 \cap X_2, \|\cdot\|_{X_1 \cap x_2})$ is the intersection of $X_1$ and $X_2$, where 
$$\| x \|_{X_1 \cap X_2 } := \max\{\|x\|_{X_1}, \|x\|_{X_2}\}$$ 
for all $x\in X_1 \cap X_2$;
\item $(X_1+X_2, \|\cdot\|_{X_1+X_2})$ is the sum of $X_1$ and $X_2$, where 
$$\|x\|_{X_1+X_2}:=\inf\{\|x_1\|_{X_1}+\|x_2\|_{X_2} : \ x=x_1+x_2, \ x_1\in X_1, \ x_2\in X_2 \} $$
for all $x\in X_1 + X_2$.
\end{enumerate}

For convenience, the  sum $ X_1 + X_2 + \cdots + X_n $ and the intersection $ X_1 \cap X_2 + \cdots \cap X_n $  will also be denoted by $ \sum\limits_{k = 1}^{n} X_k $  and $ \bigcap_{k = 1}^{n} X_k $, respectively.

Note that  $(X_1 \cap X_2, \|\cdot\|_{X_1 \cap X_2 })$ and $(X_1+X_2, \|\cdot\|_{X_1+X_2})$ are both quasi-normed vector spaces. Moreover, if $(X_1, \|\cdot\|_{X_1})$ and $(X_2, \|\cdot\|_{X_2})$ are Banach spaces, then $(X_1 \cap X_2, \|\cdot\|_{X_1 \cap X_2})$ and $(X_1+X_2, \|\cdot\|_{X_1+X_2})$ are both Banach spaces.
\end{definition}

In this article we shall use the following standard notation: $A\lesssim B$ (resp. $A\lesssim_p B$) means that $A\le C B$ (resp. $A\le C_p B$) for some absolute positive constant $C$ (resp. a positive constant $C_p$ depending only on a parameter $p$). If  $A\lesssim B$ and $B \lesssim A$ (resp. $A \lesssim_p B$ and $B \lesssim_p A$), we write $A\approx B$ (resp. $A\approx_p B$).

Throughout the paper, the terms ``homogeneous spaces'' and ``spaces of homogeneous type'' will be used interchangeably.

\subsection{Musielak--Orlicz-type spaces}\label{M-O} We shall first recall some definitions and properties of Orlicz-type spaces and Musielak--Orlicz-type spaces. In what follows, $(\Omega, \F, \mu) $ denotes a $\sigma$-finite measure space.

A function $ \Phi: [0,\infty) \to [0,\infty) $ is called an Orlicz function if it is strictly positive on $(0,\infty)$, non-decreasing, unbounded and $\Phi(0)=0$. A measurable function $ \Psi: \Omega\times [0,\infty) \to [0,\infty) $ is called a Musielak--Orlicz function if for all $ x\in \Omega $, $ \Psi(x,\cdot) $ is an Orlicz function.

The Musielak--Orlicz-type space $L^{\Psi}(\Omega)$ is the set consisting of all measurable functions $f$ on $\Omega$ such that
$$
\int_{\Omega}\Psi(x,\lambda^{-1} |f(x)|) d\mu < \infty
$$
for some $ \lambda >0 $. We equip $L^{\Psi}(\Omega)$ with the Luxembourg quasi-norm
$$
\| f \|_{L^{\Psi}(\Omega)} := \inf\lk{\lambda >0 : \int_{\Omega}\Psi(x,\lambda^{-1} |f(x)|) d\mu \leqslant 1 }, \quad f \in L^{\Psi}(\Omega).
$$

Let $p \in \mathbb{R}$. A Musielak--Orlicz function is said to be of uniformly lower type (respectively, upper type) $p$ if there exists a positive constant $C$ such that
$$
\Psi(x,st) \leqslant Cs^p\Psi(x,t) 
$$
for all $ x\in \Omega$, $t\geqslant 0$ and $s\in (0,1)$ (respectively, $s\in[1,\infty)$).
In particular, if $\Psi$ is of uniformly lower type $p$ with $0<p<1$ and of uniformly upper type $1$ then
\begin{equation}\label{Psic}
\Psi(x,ct) \approx_c \Psi(x,t) \quad \text{for all } c>0.
\end{equation}

In the sequel, $\Psi(x,t)$ is always assumed to be of uniformly lower type $p$ with $0<p<1$ and of uniformly upper type $1$, and to be continuous in the $ t $ variable. For more information on Musielak--Orlicz spaces, we refer the reader to \cite{Bo} and \cite{YLK}.

\subsection{Martingales}\label{martingales}Let $(\Omega, \F, P)$ be a fixed probability
space. Given a filtration which consists of a sequence of $\sigma$-algebras
$$ \F_1\subset\dots\subset \F_k\subset \dots \subset \F $$
such that $\sigma(\cup_{k=1}^{\infty}\F_k)=\F$, for a random variable $f\in L^{1}(\Omega, \F, P)$ and $k \in \mathbb{N}_+ $, we set 
$$ f_k =\mathbb{E}\left(f \mid{\F}_{k}\right), \quad  d_k f  = f_k-f_{k-1}, $$
where we adopt the convention that $f_{0}=0$. We shall also denote $f_k $ by $\ce_{k}(f)$. The sequence $\{f_k\}_{k\geq 0}$ is called the martingale of $f$, and $ \{ d_k f \}_{k \geq 1} $ is called the martingale difference of $f$. If $f$ and $ \{f_k\}_{k\geq 0}$ are as above, we shall also write $f =  \{f_k\}_{k\geq 0}$. To simplify notation, we write $L^p(\Omega)$  instead of $ L^{p}(\Omega, \F, P)$, $0<p<\8$.

\begin{definition} If $f$, $\{f_k\}_{k\geq 0}$ and  $\{d_k f\}_{k\geq 1}$ are as above, we define:
\begin{enumerate}
\item the maximal function
$$ f^*:= \sup\limits_{k\geqslant 0} |f_k|; $$
\item the square function
$$ S(f):= \sk{ \sum_{k=1}^{\infty} |d_kf|^2 }^{\frac{1}{2}}; $$
\item the conditional square function
$$ s(f):= \sk{ \sum_{k=1}^{\infty} \ce_{k-1} |d_kf|^2 }^{\frac{1}{2}}. $$
\end{enumerate}
\end{definition}

There are several types of martingale Hardy spaces, which are defined in terms of maximal functions, square functions and conditional square functions.
\begin{definition}\label{hardy}
 For $ 1\leq p<\8$, the martingale Hardy spaces $h^p(\Omega), H^p(\Omega), H^p_m(\Omega)$ are defined as follows
 \begin{align*}
 h^p(\Omega)& :=\{f\in L^1(\Omega) : \| f \|_{h^p} := \| s(f) \|_p < \infty\}, \\
 H^p(\Omega)& : =\{f\in L^1(\Omega) :  \| f \|_{H^p} := \| S(f) \|_p < \infty\}, \\
 H^p_m(\Omega)&: =\{f\in L^1(\Omega) : \| f \|_{H^p_m} := \| f^* \|_p < \infty\},
 \end{align*}
 respectively.
  
For $0<p<1$, $h^p(\Omega)$ is defined as the completion of the space $\{f\in L^1(\Omega) : \| f \|_{h^p} := \| s(f) \|_p < \infty\}$ with respect to the norm $\| \cdot\|_{h^p}$. Similarly, $H^p(\Omega)$ is defined as the completion of the space $\{f\in L^1(\Omega) : \| f \|_{H^p} := \| S(f) \|_p < \infty\}$ with respect to the norm $\| \cdot\|_{H^p}$, and $H^p_m(\Omega)$ is defined as the completion of the space $\{f\in L^1(\Omega) : \| f \|_{H_m^p} := \| f^* \|_p < \infty\}$ with respect to the norm $\| \cdot \|_{H_m^p}$.
\end{definition}

In general, the above three martingale Hardy spaces are different. However, for $1\leq p<\8 $, $H^p(\Omega)=H^p_m(\Omega)$ (see \cite{BD}, \cite{DB}, \cite{WF2}).

\begin{definition}\label{reg_filt}(Regular filtration)
A filtration is regular if there exists a constant $C>0$ such that for all $k \geq 2, F_{k} \in \F_{k}$, there exists a $G_{k} \in \F_{k-1}$ satisfying
$$
F_{k} \subset G_{k}, \quad P(G_{k})\leq C \cdot P(F_{k}).
$$
In addition, a martingale $f=\{f_k\}_{k\geq 0}$ with respect to such a regular filtration is called a regular martingale.
\end{definition}

\begin{remark}  Suppose that for a positive random variable $ f \in L^1(\Omega)$ the corresponding martingale $\{ f_k \}_{k \geqslant 0}$ is regular. Then  for any $ k \geq 2$ 
\begin{equation*}\label{regm}
f_k \leqslant A \cdot f_{k-1},
\end{equation*} 
where $A>0$ is a constant that depends only on the constant $C$ of Definition \ref{reg_filt}.

See \cite{Long} for more information about regular filtrations and martingales.
\end{remark}

\begin{remark}
For regular martingale filtrations, $ H^p(\Omega) = h^p(\Omega) = H^p_m(\Omega) $ when $0<p<\8$. See \cite{WF1}, \cite{WF2} and \cite{Long} for more information.
\end{remark}

An important aspect of martingale Hardy spaces is that they admit atomic decompositions. The definition of atoms in the martingale setting is given below.

\begin{definition} A random variable $ a : \Omega \to \com $ is called a martingale simple $(p,q)$-atom ($ 0<p\leq 1, 1\leq q\leqslant \infty $) if there exist $k \in \nat$ and $ A \in \F_k $ such that
\begin{enumerate}
\item $ \ce_k(a) = 0 $;
\item $ \supp (a) \subset A $;
\item $ \| a \|_q \leqslant P(A)^{\frac{1}{q}-\frac{1}{p}}$,
\end{enumerate}
where $\frac{1}{q}:=0$ when $q=\8$ as convention.
\end{definition}

\begin{definition}\label{ath}
We define the martingale atomic Hardy spaces $ H_{\rm at}^{p,q}(\Omega) $ for $ 0 < p < 1 \leqslant q \leqslant \infty $ or $p=1, \ 1<q\leq \8$ as follows
$$
H_{\rm at}^{p,q}(\Omega) := \lk{f = \sum_{j=0}^{\infty} \lambda_j a^j  \text{  where $a^j$ is a simple $(p,q)$-atom and } \sum_{j=0}^\8|\lambda_j|^p < \infty },
$$
where for $f\in H_{\rm at}^{p,q}(\Omega)$
$$ \|f\|_{H^{p,q}_{\rm at}(\Omega)}:=\inf\lk{ \big(\sum_{j= 0}^\8|\lambda_j|^p\big)^\frac{1}{p} : f = \sum_{j=0}^{\infty} \lambda_j a^j, \text{ where $a^j$ is a simple $(p,q)$-atom} }. $$
\end{definition}

It is well-known that $h^p(\Omega)=H_{\rm at}^{p,2}(\Omega)$ when $0<p\leq1$ (see \cite{WF1}). In particular, if the martingale filtration is regular, then $h^p(\Omega)=H_{\rm at}^{p,q}(\Omega)$ when $0<p\leq1$ and $1<q\leq \8$. The following result is the atomic decomposition of $H^1(\Omega)$, which follows from the noncommutative result in \cite{PM}. In particular, it reveals the relationship between $H^1(\Omega)$ and $h^1(\Omega)$  and shows that $H^1(\Omega)\neq h^1(\Omega)$ for general martingales.

\begin{thm}\label{dec of H}
We have $H^1(\Omega)=h^1(\Omega)+h^1_d(\Omega)$, where $h^1_d$ denotes the diagonal Hardy space of martingale differences
$$ h^1_d(\Omega):=\lk{h\in L^1(\Omega) : \| h \|_{h^1_d(\Omega)} := \sum_{k= 1}^\8 \| d_kh \|_1 < \infty}. $$
\end{thm}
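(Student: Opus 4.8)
The plan is to prove the equality of quasi-normed spaces by establishing the two continuous inclusions $h^1(\Omega)+h^1_d(\Omega)\hookrightarrow H^1(\Omega)$ and $H^1(\Omega)\hookrightarrow h^1(\Omega)+h^1_d(\Omega)$ separately; combined with the definition of the sum of two quasi-normed spaces, these yield the asserted identity with equivalent norms.

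For the easy inclusion it suffices to control each summand. Since $h^1(\Omega)=H^{1,2}_{\rm at}(\Omega)$, I would first check that every simple $(1,2)$-atom $a$ obeys $\|a\|_{H^1}\leqslant 1$: if $\supp(a)\subset A\in\F_k$ with $\ce_k(a)=0$, then $d_ja=0$ for $j\leqslant k$ and $S(a)$ is supported in $A$, so Cauchy--Schwarz together with the $L^2$-orthogonality of martingale differences gives $\|S(a)\|_1\leqslant P(A)^{1/2}\|S(a)\|_2=P(A)^{1/2}\|a\|_2\leqslant 1$; summing an atomic decomposition yields $\|f\|_{H^1}\lesssim\|f\|_{h^1}$. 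For the diagonal part, the pointwise bound $S(f)=\sk{\sum_k|d_kf|^2}^{1/2}\leqslant\sum_k|d_kf|$ gives $\|f\|_{H^1}\leqslant\sum_k\|d_kf\|_1=\|f\|_{h^1_d}$. Hence $\|f\|_{H^1}\lesssim\|f\|_{h^1+h^1_d}$.

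The substantial inclusion $H^1\hookrightarrow h^1+h^1_d$ is a Davis-type decomposition. Given $f\in H^1$, set the running maxima $d_k^{*}:=\sup_{j\leqslant k}|d_jf|$ (with $d_0^{*}=0$) and split each difference according to the size of its jump by defining martingales $g,h$ through
\[
d_kh:=d_kf\,\un_{\{|d_kf|>2d_{k-1}^{*}\}}-\ce_{k-1}\sk{d_kf\,\un_{\{|d_kf|>2d_{k-1}^{*}\}}},\qquad d_kg:=d_kf-d_kh .
\]
Because $\ce_{k-1}(d_kf)=0$, both $g$ and $h$ are martingales with $f=g+h$. For $h$, the key observation is that on the large-jump set one has $|d_kf|<2(d_k^{*}-d_{k-1}^{*})$, so after telescoping $\sum_k|d_kf|\un_{\{|d_kf|>2d_{k-1}^{*}\}}\leqslant 2\sup_k|d_kf|\leqslant 2S(f)$; this gives $\|h\|_{h^1_d}=\sum_k\|d_kh\|_1\lesssim\ce\sk{\sup_k|d_kf|}\leqslant\|f\|_{H^1}$, so $h\in h^1_d$.

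The delicate point, which I expect to be the main obstacle, is showing $g\in h^1$, i.e. $\ce\,s(g)\lesssim\|f\|_{H^1}$: here one must upgrade the easy $H^1$-control of $g$ to $h^1$-control using only the predictable smallness of its jumps, namely $|d_kg|\leqslant 4d_{k-1}^{*}$. My approach is to write $s(g)^2=S(g)^2-M_\8$, where $M$ is the auxiliary martingale with differences $|d_kg|^2-\ce_{k-1}|d_kg|^2$, so that $s(g)\leqslant S(g)+|M_\8|^{1/2}$. Applying the Burkholder--Davis--Gundy inequality to $M$ with exponent $1/2$, then using $|d_kg|\leqslant 4d_{k-1}^{*}\leqslant 4S(f)$, the crude bound $|dM_k|\leqslant|d_kg|^2+\ce_{k-1}|d_kg|^2$ and two applications of Cauchy--Schwarz, reduces everything to an absorption inequality of the form $X\lesssim\|f\|_{H^1}+\|f\|_{H^1}^{1/2}X^{1/2}$ for $X:=\ce\,s(g)$, which forces $X\lesssim\|f\|_{H^1}$ and completes the proof. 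I note finally that the whole statement also follows more quickly by specializing the noncommutative Davis decomposition of \cite{PM} to the commutative algebra $L^{\infty}(\Omega)$, where the column and row conditioned square functions both collapse to $s(f)$, so that $\mathcal H_1$ becomes $H^1(\Omega)$ and the decomposition reads $H^1(\Omega)=h^1(\Omega)+h^1(\Omega)+h^1_d(\Omega)=h^1(\Omega)+h^1_d(\Omega)$.
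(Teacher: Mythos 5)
Your overall architecture is sound and genuinely different from the paper: the paper gives no self-contained proof at all, but simply quotes the noncommutative Davis decomposition of \cite{PM} (so your closing remark is, in effect, the paper's entire argument). Your easy inclusion $h^1(\Omega)+h^1_d(\Omega)\subset H^1(\Omega)$ is correct, as are the Davis-type splitting along $\lk{|d_kf|>2d_{k-1}^*}$, the telescoping bound $\sum_k|d_kf|\un_{\{|d_kf|>2d_{k-1}^*\}}\le 2\sup_k|d_kf|$, and the predictable bound $|d_kg|\le 4d^*_{k-1}$. The genuine gap is the key step for $g$: the ``Burkholder--Davis--Gundy inequality with exponent $1/2$'' that you apply to the auxiliary martingale $M$ is \emph{false} for general martingales. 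The equivalence $\|M^*\|_p\approx\|S(M)\|_p$ holds for arbitrary filtrations only when $1\le p<\infty$; for $0<p<1$ it requires regularity (Burkholder--Gundy), which is exactly what is unavailable here --- Theorem \ref{dec of H} is interesting precisely for non-regular filtrations, where $H^1(\Omega)\neq h^1(\Omega)$. Concretely, at exponent $1/2$: over $N$ steps let $d_k=\frac1N$ with probability $1-\frac1{NJ}$ and $d_k=\frac1N-J$ with probability $\frac1{NJ}$, independently. With probability at least $1-\frac1J$ no big jump occurs and then $f^*=1$ while $S(f)^2=\frac1N$; hence $\ce\mk{(f^*)^{1/2}}\ge\frac12$, whereas $\ce\mk{S(f)^{1/2}}\lesssim N^{-1/4}+J^{-1/2}\to 0$. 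So BDG cannot be cited as a black box in this range of exponents, and as written your absorption scheme rests on a false inequality.

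The step is repairable, because your $M$ is not an arbitrary martingale: its jumps admit a nondecreasing \emph{predictable} majorant, namely $|d_kM|\le |d_kg|^2+\ce_{k-1}|d_kg|^2\le 32\,(d^*_{k-1})^2$, using $|d_kg|\le 4d^*_{k-1}$ and the $\F_{k-1}$-measurability of $d^*_{k-1}$. For martingales whose jumps are dominated by a predictable nondecreasing process $\lambda_{k-1}$, the good-$\lambda$ argument does localize correctly (stopping the first time $\lambda$ or the running square function exceeds $\delta t$, the jump at the stopping time is still $\le\delta t$ by predictability), and one gets Burkholder's refined inequality $\ce\mk{(M^*)^{p}}\lesssim_p\ce\mk{S(M)^{p}}+\ce\mk{\lambda_\infty^{p}}$ for all $0<p<\infty$. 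Here the extra term is harmless: $\ce\mk{\lambda_\infty^{1/2}}\lesssim\ce\mk{d^*_\infty}\le\|f\|_{H^1(\Omega)}$, so your absorption inequality still closes (you should also secure $X=\ce\, s(g)<\infty$ a priori by first treating finite $L^2$ martingales and then passing to the limit). With the predictable-majorant form of the inequality cited in place of plain BDG, your constructive proof becomes correct --- and it is essentially the classical commutative Davis argument, which would make the theorem self-contained rather than an appeal to \cite{PM}.
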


We shall now introduce the martingale $BMO$ and $bmo$ spaces, which are the duals of $H^1(\Omega)$ and $h^1(\Omega)$, respectively (see Theorem \ref{duality} below).

\begin{definition} Assume $f, g \in L^2(\Omega)$. We say that $f$ is a martingale $BMO$ function if
$$ \| f \|_{BMO(\Omega)} := \sup_{n\geqslant 1}\| \ce_{n}|f-f_{n-1}|^2 \|_{\infty}^{\frac{1}{2}} < \infty. $$
We say that $g$ is a martingale $bmo$ function  if
$$ \| g \|_{bmo(\Omega)} := \sup_{n\geqslant 0}\| \ce_{n}|g-g_n|^2 \|_{\infty}^{\frac{1}{2}} < \infty. $$

Denote by $BMO(\Omega)$ and $bmo(\Omega)$ the spaces consisting of all martingale $BMO$ and $bmo$ functions, respectively.
\end{definition}

For regular martingales, $BMO(\Omega)=bmo(\Omega)$. The following result is the so-called martingale John--Nirenberg inequality and can be found in \cite{Ga}.

\begin{thm}\label{MJN}
There exists a sufficiently small constant $\kappa>0$ such that for any $f\in BMO(\Omega)$ with $\|f\|_{BMO(\Omega)}\leq \kappa$, we have
$$ \ce\sk{ e^{|f|} } \leqslant 1. $$
\end{thm}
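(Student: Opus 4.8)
The plan is to prove the martingale John--Nirenberg inequality by reducing it to a good-$\lambda$ type exponential decay estimate on the distribution of $|f|$, exploiting the defining control that $BMO(\Omega)$ gives on conditional $L^2$-oscillations. First I would fix $f\in BMO(\Omega)$ with $\|f\|_{BMO(\Omega)}\le\kappa$ and, without loss of generality, assume $f_0=0$ (so that $f-f_0=f$ and $\ce|f|^2\le\|f\|_{BMO}^2\le\kappa^2$). The heart of the matter is the classical John--Nirenberg distributional bound: there are absolute constants $c_1,c_2>0$ such that
\begin{equation*}
P\bigl(|f|>\lambda\bigr)\le c_1\,\exp\!\bigl(-c_2\lambda/\|f\|_{BMO(\Omega)}\bigr)
\end{equation*}
for all $\lambda>0$. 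I would establish this in the martingale setting via a stopping-time argument. Given a level $\lambda$, define the stopping time $\tau=\inf\{k:|f_k|>\lambda\}$; then $\{f^*>\lambda\}=\{\tau<\infty\}$ decomposes into the disjoint atoms where $\tau$ first exceeds $\lambda$, and on each such piece one controls the remaining oscillation of $f$ above $\lambda$ by the $BMO$ norm, iterating to pass from level $\lambda$ to level $2\lambda$ (or $\lambda+m\|f\|_{BMO}$) with a fixed proportional loss of measure. The Cauchy--Schwarz inequality applied to $\ce_{\tau}|f-f_{\tau-1}|^2\le\|f\|_{BMO(\Omega)}^2$ is what converts the $L^2$-control in the definition of $BMO(\Omega)$ into the geometric decay needed for the iteration.

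Once the exponential distributional estimate is in hand, the passage to $\ce(e^{|f|})\le 1$ is a routine integration. Writing
\begin{equation*}
\ce\bigl(e^{|f|}\bigr)=1+\int_0^{\infty} e^{\lambda}\,P\bigl(|f|>\lambda\bigr)\,d\lambda
\end{equation*}
and inserting the bound $P(|f|>\lambda)\le c_1 e^{-c_2\lambda/\kappa}$, the integral converges precisely when $c_2/\kappa>1$, i.e.\ when $\kappa$ is small enough; choosing $\kappa$ sufficiently small then forces the whole expression to be at most $1$. This is where the smallness hypothesis on $\|f\|_{BMO(\Omega)}$ is consumed, and it is essentially an explicit computation once the constants $c_1,c_2$ are tracked.

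I expect the main obstacle to be the stopping-time iteration itself, since one must be careful that the filtration is only assumed to be a general (not necessarily regular) martingale filtration. In the classical dyadic or continuous setting the self-similar structure of the stopping cubes makes the geometric decay transparent, but for an abstract filtration one must instead argue atom-by-atom using the conditional expectations $\ce_{\tau-1}$ and verify that the proportionality constant in $P(\tau_{j+1}<\infty)\le\theta\,P(\tau_j<\infty)$ (with $\theta<1$) is genuinely uniform and independent of the starting atom. A clean way to organize this is to prove the inequality first for the truncated martingales $f^{(n)}=f_n$, obtaining constants independent of $n$, and then let $n\to\infty$ using Fatou's lemma together with the $L^2$-convergence $f_n\to f$ guaranteed by $f\in L^2(\Omega)$; alternatively one may cite the reference \cite{Ga} for the distributional estimate and supply only the integration step. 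Since the statement only asks for the exponential integrability bound with an unspecified small $\kappa$, I would not attempt to optimize the constant and would keep the stopping-time bookkeeping as lightweight as possible.
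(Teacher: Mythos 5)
Your stopping-time outline is, in substance, the standard route to the martingale John--Nirenberg inequality, and it is worth noting that the paper itself gives no proof of Theorem \ref{MJN} at all --- it simply cites Garsia \cite{Ga} --- so you are supplying an argument where the paper supplies a reference. The distributional part of your plan is sound: with the paper's convention $f_0=0$, the $BMO$ norm dominates both the conditional oscillations $\ce_{\tau}|f-f_{\tau-1}|^2$ and the jumps $\sup_k\|d_kf\|_\infty$ (since $|d_kf|^2=|\ce_k(f-f_{k-1})|^2\le\ce_k|f-f_{k-1}|^2$), and it is precisely this jump control that makes the level-crossing iteration uniform over the atoms of a general, not necessarily regular, filtration; this is also why the argument works for $BMO(\Omega)$ but fails for $bmo(\Omega)$, consistent with the remark following Theorem \ref{MJN}.

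The genuine gap is in your last step. From $P(|f|>\lambda)\le c_1e^{-c_2\lambda/\kappa}$ the layer-cake formula gives
\begin{equation*}
\ce\bigl(e^{|f|}\bigr)=1+\int_0^\infty e^{\lambda}\,P\bigl(|f|>\lambda\bigr)\,d\lambda\le 1+\frac{c_1\kappa}{c_2-\kappa},
\end{equation*}
which is strictly larger than $1$, and no choice of small $\kappa$ can repair this: since $e^{|f|}\ge 1+|f|$ pointwise, one always has $\ce(e^{|f|})\ge 1+\ce|f|>1$ unless $f=0$ almost surely. So your claim that choosing $\kappa$ small ``forces the whole expression to be at most $1$'' is false; in fact the statement as printed can only hold for $f\equiv 0$, so it is the statement itself that is (harmlessly) misstated. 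What your argument actually proves --- and what is actually needed --- is $\ce(e^{|f|})\le C$ for an absolute constant (say $C=2$) once $\kappa$ is small enough; this corrected form is exactly what Lemma \ref{HolderP} requires, since the bound in \eqref{2.5} then becomes $\le C+1$ and the conclusion follows unchanged via the scaling property \eqref{Psic}. A complete write-up should therefore either restate the theorem with such a constant $C$ on the right-hand side (equivalently, as $\ce(e^{|f|}-1)\le 1$), or explicitly flag the misprint, rather than assert the literal bound $\le 1$.
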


\begin{rem} From the martingale John--Nirenberg inequality, we have for any $1\leq p<\8$,
$$  \| f \|_{BMO(\Omega)} \approx_p \sup_{n\geqslant 1}\| \ce_{n}|f-f_{n-1}|^p \|_{\infty}^{\frac{1}{p}} . $$
However, the above John--Nirenberg inequality fails for $bmo(\Omega)$ in the general martingale setting. 
\end{rem}
 
For the following duality theorem, see \cite{Ga}, \cite{Long}, \cite{WF2}.
\begin{thm}\label{duality}
  $ (H^1(\Omega))^* = BMO(\Omega)$ and $\ (h^1(\Omega))^* = bmo(\Omega) $.
\end{thm}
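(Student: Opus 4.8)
The plan is to establish the two duality statements separately: first $(h^1(\Omega))^* = bmo(\Omega)$ by means of the atomic decomposition $h^1(\Omega) = H_{\rm at}^{1,2}(\Omega)$, and then to bootstrap to $(H^1(\Omega))^* = BMO(\Omega)$ through the decomposition $H^1(\Omega) = h^1(\Omega) + h^1_d(\Omega)$ of Theorem \ref{dec of H} together with the duality of sums of Banach spaces recalled in Section \ref{notation}. In both cases all pairings are first defined on the dense subspace $L^2(\Omega)$ via $\la f, g\ra := \ce(fg)$ and then extended by continuity, in accordance with the completion-based definitions of the Hardy spaces.

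For the conditional duality, to see $bmo(\Omega) \hookrightarrow (h^1(\Omega))^*$, given $g \in bmo(\Omega)$ I would test $\ell_g(f) := \ce(fg)$ on a simple $(1,2)$-atom $a$ with $\ce_m(a) = 0$ and $\supp(a)\subset A \in \F_m$. Since $g_m$ is $\F_m$-measurable and $\ce_m(a)=0$, one has $\la a, g\ra = \ce(a(g-g_m))$, and Cauchy--Schwarz together with $\supp(a)\subset A$, the size bound $\|a\|_2 \le P(A)^{-1/2}$, and the estimate $\ce(\un_A|g-g_m|^2) = \ce(\un_A\,\ce_m|g-g_m|^2) \le P(A)\,\|g\|_{bmo(\Omega)}^2$ give $|\la a, g\ra| \le \|g\|_{bmo(\Omega)}$; summing over an atomic decomposition yields $|\ell_g(f)| \lesssim \|f\|_{h^1}\,\|g\|_{bmo(\Omega)}$. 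For the converse, every $\ell \in (h^1(\Omega))^*$ restricts to a bounded functional on $L^2(\Omega)$ (which embeds continuously into $h^1(\Omega)$ since $\|f\|_{h^1}\le\|f\|_2$ on a probability space), so Riesz representation furnishes $g \in L^2(\Omega)$ with $\ell(f)=\ce(fg)$. Testing $\ell$ on $f = \un_B(g-g_n)$ for $B \in \F_n$ gives $\ell(f) = \ce(\un_B|g-g_n|^2)$, while $\|f\|_{h^1}\le P(B)^{1/2}\|f\|_2 = P(B)^{1/2}\ce(\un_B|g-g_n|^2)^{1/2}$; combining these yields $\ce(\un_B|g-g_n|^2)\le\|\ell\|^2 P(B)$ for every $B\in\F_n$, which is precisely $\ce_n|g-g_n|^2\le\|\ell\|^2$ and hence $\|g\|_{bmo(\Omega)}\le\|\ell\|$.

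To pass to $(H^1(\Omega))^* = BMO(\Omega)$, I would invoke Theorem \ref{dec of H} and the general identity $(X_1+X_2)^* = X_1^*\cap X_2^*$ (with the maximum norm) valid when $X_1\cap X_2$ is dense in both $X_1$ and $X_2$; applied with $X_1 = h^1(\Omega)$ and $X_2 = h^1_d(\Omega)$ this gives $(H^1(\Omega))^* = bmo(\Omega)\cap(h^1_d(\Omega))^*$. The dual of the diagonal space is computed directly from its $\ell^1$-type norm $\|h\|_{h^1_d} = \sum_k\|d_kh\|_1$: the pairing $\la h, g\ra = \sum_k\ce(d_kh\,d_kg)$ together with $|\ce(d_kh\,d_kg)|\le\|d_kh\|_1\,\|d_kg\|_\8$ identifies $(h^1_d(\Omega))^*$ with $\{g : \sup_k\|d_kg\|_\8<\8\}$. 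Finally, the elementary pointwise identity $\ce_n|g-g_{n-1}|^2 = |d_ng|^2 + \ce_n|g-g_n|^2$ produces the norm equivalence
$$ \|g\|_{BMO(\Omega)} \approx \|g\|_{bmo(\Omega)} + \sup_n\|d_ng\|_\8, $$
which is exactly the statement that $bmo(\Omega)\cap(h^1_d(\Omega))^* = BMO(\Omega)$, completing the argument.

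The main obstacle I expect is the converse inclusion in each duality, namely certifying that the function produced by Riesz representation genuinely lies in $bmo(\Omega)$ (resp. $BMO(\Omega)$) with norm controlled by $\|\ell\|$; this hinges on choosing the test martingales correctly and on the sharp control of their $h^1$ (resp. $H^1$) norms, as in the estimate above. A secondary technical point is verifying the density and compatibility hypotheses required for the sum-duality identity $(X_1+X_2)^*=X_1^*\cap X_2^*$, so that the intersection $bmo(\Omega)\cap(h^1_d(\Omega))^*$ is correctly matched with $BMO(\Omega)$ at the level of equivalent norms rather than merely as sets.
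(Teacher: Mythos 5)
The paper does not actually prove Theorem \ref{duality}; it quotes it from the literature (\cite{Ga}, \cite{Long}, \cite{WF2}), so there is no in-paper argument to compare against line by line. Your proof reconstructs the standard route and is essentially correct: the atomic estimate giving $bmo(\Omega)\hookrightarrow (h^1(\Omega))^*$ is right, and so is the converse via Riesz representation with the test martingales $f=\un_B(g-g_n)$ --- indeed $d_kf=\un_B\,d_kg$ for $k>n$, so $s(f)$ is supported in $B$ and $\|f\|_{h^1}\le P(B)^{1/2}\|f\|_2$ as you claim. A genuine strength of your write-up is that the paper states Proposition \ref{Bb} as a \emph{consequence} of Theorems \ref{dec of H} and \ref{duality}, so invoking it here would be circular; you instead derive the equivalence $\|g\|_{BMO(\Omega)}\approx\|g\|_{bmo(\Omega)}+\sup_k\|d_kg\|_\infty$ directly from the pointwise identity $\ce_n|g-g_{n-1}|^2=|d_ng|^2+\ce_n|g-g_n|^2$, which is correct and keeps the logic non-circular.

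Two caveats, neither fatal but both requiring attention. First, the identification $(h^1_d(\Omega))^*=\{g:\sup_k\|d_kg\|_\infty<\infty\}$ is not literally true if $g$ is required to be a function: a bounded functional on $h^1_d(\Omega)$ is represented (via Hahn--Banach in $\ell^1(L^1)$) by an adapted sequence $(w_k)$ with $w_k$ being $\F_k$-measurable, $\ce_{k-1}w_k=0$ and $\sup_k\|w_k\|_\infty<\infty$, and such a sequence need not be the difference sequence of any integrable function (take the $w_k$ to be independent Rademacher variables). This is harmless for your purpose, but it forces you to prove the compatibility step explicitly: if $\ell\in(H^1(\Omega))^*$ restricts on $h^1(\Omega)$ to $\ce(\cdot\,g)$ with $g\in bmo(\Omega)$ and on $h^1_d(\Omega)$ to $h\mapsto\sum_k\ce(d_kh\,w_k)$, then testing on single-difference martingales $u$ (bounded, $\F_k$-measurable, $\ce_{k-1}u=0$; these lie in $h^1(\Omega)\cap h^1_d(\Omega)$) yields $\ce\bigl(v(w_k-d_kg)\bigr)=0$ for all bounded $\F_k$-measurable $v$, hence $w_k=d_kg$; only after this does the intersection consist of genuine functions, so that your norm equivalence can conclude. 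Second, the sum-duality identity you invoke requires $h^1(\Omega)\cap h^1_d(\Omega)$ to be dense in both summands, and density in $h^1_d(\Omega)$ is not immediate: for $h\in h^1_d(\Omega)$ the truncation $\ce_Nh$ need not belong to $h^1(\Omega)$, since $\ce_{k-1}|d_kh|^2$ may be infinite when $d_kh\in L^1\setminus L^2$. This is repaired by approximating $d_kh$ by $T_M(d_kh)-\ce_{k-1}T_M(d_kh)$ with $T_M$ a truncation at height $M$, or avoided entirely by running the direct restriction-and-testing argument sketched above instead of citing the abstract identity.
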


The following proposition, which can be found in \cite{Al} and \cite{Ga}, is a consequence of Theorems \ref{dec of H} and \ref{duality} and it gives a description of the relationship between $BMO(\Omega)$ and $bmo(\Omega)$. In particular, it implies that $BMO(\Omega)\subsetneqq bmo(\Omega)$ for general martingales.

\begin{proposition}\label{Bb} Assume $f$ is a martingale $ BMO $ function. Then
\begin{equation}
\| f \|_{BMO(\Omega)} \approx \| f \|_{bmo(\Omega)} + \sup\limits_{k\geq 1} \| d_kf \|_{\infty}.
\end{equation}
\end{proposition}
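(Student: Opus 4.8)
The plan is to reduce the whole equivalence to the elementary orthogonality identity
\[
  \ce_n|f-f_{n-1}|^2 = |d_nf|^2 + \ce_n|f-f_n|^2, \qquad n\ge 1,
\]
which says that each density defining the $BMO(\Omega)$-norm splits as the squared martingale difference plus the corresponding density of the $bmo(\Omega)$-norm. To prove it I would write $f-f_{n-1}=d_nf+(f-f_n)$, observe that $d_nf=f_n-f_{n-1}$ is $\F_n$-measurable whereas $\ce_n(f-f_n)=f_n-f_n=0$, so that after expanding the square and applying $\ce_n$ the cross term vanishes and $\ce_n|d_nf|^2=|d_nf|^2$ remains. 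Here $f\in L^2(\Omega)$ guarantees that all the conditional expectations are meaningful.

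Granting the identity, the inequality $\|f\|_{BMO(\Omega)}\lesssim \|f\|_{bmo(\Omega)}+\sup_{k\ge1}\|d_kf\|_\infty$ follows by taking $L^\infty$-norms and using subadditivity: for each $n\ge1$,
\[
  \|\ce_n|f-f_{n-1}|^2\|_\infty \le \sup_{k\ge1}\|d_kf\|_\infty^2 + \|f\|_{bmo(\Omega)}^2 ,
\]
since the second summand on the right of the identity is dominated by the $bmo$-norm. Taking the supremum over $n$ and a square root, together with $\sqrt{a^2+b^2}\le a+b$ for $a,b\ge0$, gives the claim.

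For the reverse inequality I would treat the two terms on the right separately. Dropping the non-negative summand $\ce_n|f-f_n|^2$ in the identity gives $|d_nf|^2\le \ce_n|f-f_{n-1}|^2$, hence $\sup_{k\ge1}\|d_kf\|_\infty\le\|f\|_{BMO(\Omega)}$. For the $bmo$-part I would use only the tower property: for every $n\ge0$,
\[
  \ce_n|f-f_n|^2 = \ce_n\big(\ce_{n+1}|f-f_n|^2\big),
\]
and since $\ce_{n+1}|f-f_n|^2=\ce_{n+1}|f-f_{(n+1)-1}|^2$ is exactly the $BMO$-density at index $n+1$, we get $\|\ce_n|f-f_n|^2\|_\infty\le\|f\|_{BMO(\Omega)}^2$ for all $n\ge0$; this handles uniformly both the range $n\ge1$ and the boundary term $n=0$ coming from the convention $f_0=0$. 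Taking the supremum yields $\|f\|_{bmo(\Omega)}\le\|f\|_{BMO(\Omega)}$, and adding the two estimates proves $\|f\|_{bmo(\Omega)}+\sup_{k\ge1}\|d_kf\|_\infty\lesssim\|f\|_{BMO(\Omega)}$.

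I do not expect a genuine obstacle: the entire argument hinges on spotting the orthogonality identity, and the only point requiring a little care is matching the index ranges ($n\ge1$ for $BMO$ against $n\ge0$ for $bmo$), which the tower-property estimate settles at once.
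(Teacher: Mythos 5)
Your proof is correct, and it takes a genuinely different route from the paper. The paper does not prove Proposition \ref{Bb} directly: it records it as a consequence of the Davis-type decomposition $H^1(\Omega)=h^1(\Omega)+h^1_d(\Omega)$ (Theorem \ref{dec of H}) and the duality theorem $(H^1(\Omega))^*=BMO(\Omega)$, $(h^1(\Omega))^*=bmo(\Omega)$ (Theorem \ref{duality}), citing \cite{Al} and \cite{Ga}; in that route one identifies $BMO(\Omega)$ with the dual of the sum $h^1(\Omega)+h^1_d(\Omega)$, hence with the intersection of $bmo(\Omega)$ and $(h^1_d(\Omega))^*$, whose norm is equivalent to $\|\cdot\|_{bmo(\Omega)}+\sup_{k\geq 1}\|d_k(\cdot)\|_{\infty}$. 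Your argument instead works entirely at the level of the definitions, via the pointwise orthogonality identity $\ce_n|f-f_{n-1}|^2=|d_nf|^2+\ce_n|f-f_n|^2$ (the cross term vanishes because $d_nf$ is $\F_n$-measurable while $\ce_n(f-f_n)=0$), together with the tower property $\ce_n=\ce_n\ce_{n+1}$ to reconcile the index shift between the two norms. What your approach buys: it is elementary and self-contained, requires neither atomic decompositions nor duality, gives explicit constants (your two directions come with constants $1$ and $2$), and is robust to the interpretation of $\ce_0$ and of the convention $f_0=0$, since the $n=0$ term of the $bmo$-norm is controlled through $\ce_0\ce_1$ for any choice of $\F_0\subset\F_1$. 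What the paper's route buys: it exhibits the proposition as precisely the dual statement of the decomposition $H^1=h^1+h^1_d$, which is the conceptual point the paper wants (in particular why $BMO(\Omega)\subsetneqq bmo(\Omega)$ for general martingales mirrors $H^1\neq h^1$), at the price of invoking heavier machinery and non-explicit constants.
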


We end this section with the definition of martingale Musielak--Orlicz Hardy spaces and the generalized H\"{o}lder inequality.
\begin{definition}\label{def_M-H-O}
The martingale Musielak--Orlicz Hardy space $H^{\Psi}(\Omega)$ (where $\Psi$ is described in Section \ref{M-O}) is the space consisting of all martingales $f=\{f_k\}_{k\geq 0}$ such that the square function $S(f) \in L^{\Psi}(\Omega)$. Moreover, we define the quasi-norm
$$ \|f\|_{H^\Psi(\Omega)}:=\|S(f)\|_{L^\Psi(\Omega)}. $$
If $\Psi$ is replaced by an Orlicz function $\Phi$, the corresponding Hardy--Orlicz space $H^{\Phi}(\Omega)$ is defined in an analogous way.
\end{definition}

To obtain the generalized H\"{o}lder inequality, we shall introduce a particular Orlicz space $L^\Phi(\Omega)$, where
\begin{equation}\label{Phi}
 \Phi(t): = \frac{t}{\log(e+t)}, \quad t\geq 0.
 \end{equation}
Note that $\Phi$ is an Orlicz function of uniformly lower type $p \ (0<p<1)$ and upper type $1$, which guarantees that the vector space $L^\Phi(\Omega)$ is a quasi-normed space. Note that $L^1(\Omega)\subset L^\Phi(\Omega)$.

\begin{remark}
It follows from \cite{Mi} that if $f=\{f_k\}_{k\geq 0}$ is a regular martingale, then the martingale Hardy--Orlicz space $H^{\Phi}(\Omega)$ can also be characterized by martingale maximal functions and conditional square functions. For any $f\in H^\Phi(\Omega)$ one has
$$ \|f\|_{H^\Phi(\Omega)}= \| S(f)\|_{L^\Phi(\Omega)}\approx \| f^*\|_{L^\Phi(\Omega)}\approx \| s(f)\|_{L^\Phi(\Omega)}. $$
\end{remark}

The following lemma is a variant of \cite[Proposition 2.1]{Bo1} in the martingale setting.

\begin{lemma}\label{HolderP}
Assume $ (\Omega, \F , P) $ is a probability space, $f \in L^1(\Omega)$ and $ g \in BMO(\Omega) $. Then $f\cdot g\in L^{\Phi}(\Omega)$ and
\begin{equation}\label{2.4}
  \| f\cdot g \|_{L^{\Phi}(\Omega)} \lesssim \| f \|_{1}\| g \|_{BMO(\Omega)}.
\end{equation}

\end{lemma}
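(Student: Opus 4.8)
The plan is to prove the generalized Hölder inequality $\| f\cdot g \|_{L^{\Phi}(\Omega)} \lesssim \| f \|_{1}\| g \|_{BMO(\Omega)}$ by first reducing to the normalized case and then exploiting the John--Nirenberg-type exponential integrability guaranteed by Theorem \ref{MJN}. By homogeneity of the Luxembourg quasi-norm and bilinearity in $f$ and $g$, I may assume $\| f \|_1 = 1$ and $\| g \|_{BMO(\Omega)} = \kappa$, where $\kappa$ is the small constant from Theorem \ref{MJN}; it then suffices to show that the modular $\int_\Omega \Phi(|f\cdot g|)\, dP \leqslant 1$ (up to an absolute constant absorbed into the $\lesssim$), since this directly bounds the Luxembourg quasi-norm. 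Recalling the definition $\Phi(t) = t/\log(e+t)$, the key pointwise inequality I want to invoke is the elementary bound
\begin{equation*}
\Phi(st) = \frac{st}{\log(e+st)} \lesssim s\,\big(e^{t} - 1\big) + \Phi(s),
\end{equation*}
or a similar Young-type splitting that separates the $L^1$ factor from the exponentially-integrable $BMO$ factor.

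The central mechanism is the standard duality between $L\log L$ and $\exp L$: the function $\Phi(t) = t/\log(e+t)$ is (equivalent to) the complementary Orlicz function to $e^{t}-1$, so products of an $L^1$ function and an $\exp L$ function land in $L^\Phi$. Concretely, I would write, for appropriate $\lambda>0$,
\begin{equation*}
\int_\Omega \Phi\!\left( \frac{|f|\,|g|}{\lambda} \right) dP \lesssim \frac{1}{\lambda}\int_\Omega |f|\,\big(e^{|g|/\kappa} - 1\big)\, dP + \int_\Omega \Phi\!\left(\frac{|f|}{\lambda}\right) dP,
\end{equation*}
choosing $\lambda$ comparable to $\| f \|_1 \| g \|_{BMO}$ so that both terms on the right are controlled. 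The first term is handled by Theorem \ref{MJN}: since $\|g\|_{BMO} = \kappa$ forces $\ce(e^{|g|}) \leqslant 1$ after rescaling $g$ to have $BMO$-norm $\kappa$, the exponential moment is bounded, and pairing it against $|f|$ via the $L^1$-bound on $f$ (together with $\int |f| = 1$) yields a finite constant. The second term is controlled using $L^1(\Omega)\subset L^\Phi(\Omega)$ together with the lower/upper type estimates \eqref{Psic} for $\Phi$.

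The main obstacle I anticipate is making the exponential-integrability argument genuinely work against the $L^1$ weight $|f|$ rather than against the probability measure $P$ alone. Theorem \ref{MJN} controls $\ce(e^{|g|})$ under the uniform measure, but in the display above I need $\int |f|\,e^{|g|/\kappa}\, dP$, where $|f|$ is an arbitrary $L^1$ density that need not be bounded or have good distributional properties. The cleanest route is probably to avoid this pairing entirely and instead argue via the distribution function: estimate $P(|f\cdot g| > \lambda)$ by decomposing on the sets where $|g|$ is large versus small, use Chebyshev together with the subexponential tail $P(|g| > t) \leqslant e^{-t/\kappa}$ coming from Theorem \ref{MJN}, and then integrate $\Phi$ against the resulting layer-cake. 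Alternatively, one can follow the scheme of \cite[Proposition 2.1]{Bo1} and split $g = g\mathbbm{1}_{\{|g|\leqslant M\}} + g\mathbbm{1}_{\{|g|>M\}}$ at a threshold $M \approx \log(e + 1/|f|)$ tailored pointwise to $f$, so that on the bounded part $\Phi(|f g|) \lesssim |f|$ directly, while the unbounded part is summably small by the exponential decay. I expect that balancing this threshold correctly — so that the logarithmic loss in $\Phi$ exactly cancels against the exponential gain from John--Nirenberg — is the delicate computational heart of the proof, and the rest is routine bookkeeping with the type estimates.
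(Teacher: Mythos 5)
Your skeleton (normalize, bound the modular, conclude via the Luxembourg norm and \eqref{Psic}) is the same as the paper's, and you correctly diagnose the fatal obstacle yourself: $\int_\Omega |f|\,e^{|g|/\kappa}\,dP$ is \emph{not} controlled by $\|f\|_1$ times a John--Nirenberg constant, since $f$ may concentrate exactly where $g$ is large. The problem is that none of the routes you then offer gets around this, so the proposal never arrives at a proof. The missing ingredient is the \emph{additive} pointwise inequality of \cite[Lemma 2.1]{Bo1}, which the paper uses with $M=0$:
\begin{equation*}
\frac{st}{\log(e+st)}\;\leqslant\; e^{t}+s,\qquad s,t\geqslant 0 .
\end{equation*}
Taking $s=|f(x)|/\|f\|_1$ and $t=\kappa|g(x)|/\|g\|_{BMO(\Omega)}$, the modular splits into $\int_\Omega e^{t}\,dP$, a term depending on $g$ alone which Theorem \ref{MJN} bounds with no pairing against $f$, plus $\int_\Omega s\,dP=1$; then \eqref{Psic} yields \eqref{2.4}. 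Your substitute $\Phi(st)\lesssim s(e^{t}-1)+\Phi(s)$ is true but useless: its first term is a \emph{product}, which is exactly the forbidden pairing. (Also, $\Phi(t)=t/\log(e+t)$ is not the Young conjugate of $e^{t}-1$; that conjugate is of order $t\log^{+}t$. The space $L^{\Phi}$ is where products $L^{1}\cdot \exp L$ land --- which is the content of the lemma --- not a member of a complementary Orlicz pair.)

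Both of your fallbacks fail concretely. (a) The distributional route cannot work even in principle: Chebyshev retains only the weak-$L^{1}$ bound $P(|f|>u)\leqslant \|f\|_1/u$, and that information is insufficient. Indeed, take $g\equiv 1$ (so $\|g\|_{BMO(\Omega)}=1$ under the convention $g_0=0$, with trivially subexponential tails) and $f\geqslant 0$ with $P(f>\lambda)=\min(1,1/\lambda)$; then $fg=f\notin L^{\Phi}(\Omega)$, although every tail bound your scheme would use is satisfied, so no argument running only on those bounds can prove the lemma. Quantitatively, the union bound gives at best $P(|fg|>\lambda)\lesssim \lambda^{-1}\log\lambda$, and $\int^{\infty}\Phi'(\lambda)\,\lambda^{-1}\log\lambda \,d\lambda \approx \int^{\infty}\lambda^{-1}\,d\lambda$ diverges. (b) In the pointwise splitting, your threshold is inverted: it must be $M\approx\log\big(e+|f|/\|f\|_1\big)$, not $\log(e+1/|f|)$. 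With $s,t$ as above: if $t\leqslant\log(e+s)$ then $\Phi(st)\leqslant s$ (for $t\leqslant 1$ use $\Phi(st)\leqslant st\leqslant s$; for $t\geqslant 1$ use $\log(e+st)\geqslant\log(e+s)\geqslant t$), while if $t>\log(e+s)$ then $s<e^{t}$, hence $\Phi(st)\leqslant st\leqslant te^{t}\leqslant e^{2t}$, which Theorem \ref{MJN} integrates after replacing $\kappa$ by $\kappa/2$ in the normalization of $g$. With your threshold, by contrast, the ``unbounded part'' contains all of $\{|f|\geqslant 1\}\cap\{|g|>\log(e+1)\}$, where there is no exponential decay to exploit and one is again forced to integrate $|f|$ against an unbounded function of $g$; moreover the claim $\Phi(|fg|)\lesssim|f|$ on the bounded part fails pointwise where $|f|$ is small (there $\Phi(|fg|)$ can be of size $|f|\log(1/|f|)\gg|f|$). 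Note that executing (b) with the corrected threshold is precisely a proof of the displayed additive inequality, i.e., it recovers the paper's argument.
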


\begin{proof}

The proof is similar to the proof of the corresponding Euclidean result and we shall only outline it here for the convenience of the reader.  
By  \cite[Lemma 2.1]{Bo1}, one has
\begin{equation}\label{Mst}
\dfrac{st}{ M + \log(e+st) } \leq   e^{t-M} + s .
\end{equation}
for all $M\geq 0, s\geq 0, t\geq 0$.

When $\|f\|_1=0$ or $\|g\|_{BMO(\Omega)}=0$,  $(\ref{2.4})$ trivially holds. Assume $g\in BMO(\Omega)$ with $\|g\|_{BMO(\Omega)}>0$ and $f\in L^1(\Omega)$ with $\|f\|_1>0$. Let $ \kappa $ be the constant in Theorem \ref{MJN}, $M=0$, $t=\frac{ \kappa |g(x)| }{ \|g\|_{BMO(\Omega)} }$ and $s=\frac{|f(x)|}{\|f\|_1}$. Then by Theorem \ref{MJN} and \eqref{Mst}, we have
\begin{align}\label{2.5}
\int_{\Omega} \Phi\sk{ \frac{ |f(x)\cdot g(x)|}{ \kappa^{-1}\| f \|_1 \| g \|_{BMO(\Omega)} }  } dP & \leq  \int_{\Omega} e^{ \frac{ \kappa |g(x)| }{ \|g\|_{BMO(\Omega)} } }dP +  \norm{\frac{f}{\|f\|_1}}_1 \leq 2.
\end{align}
Hence, from (\ref{Psic}) we conclude
$$   \| f\cdot g \|_{L^{\Phi}(\Omega)} \lesssim \kappa^{-1}\| f \|_{1}\| g \|_{BMO(\Omega)}, $$
which completes the proof of the lemma.
\end{proof}

We shall refer to \eqref{2.4} as the generalized H\"{o}lder inequality.

\section{Bilinear decompositions on $H^1(\Omega)\times BMO(\Omega)$}\label{case1}
In this section we prove Theorem \ref{A}. Let $ (\Omega, \F, P) $ be a fixed probability space and let $f\in H^1(\Omega), \ g\in BMO(\Omega)$. If we assume that $f $ and $g$ have finite martingale expansions, then we may write their pointwise product $f \cdot g$ as follows
\begin{equation}\label{dec}
  f\cdot g  = \Pi_1(f,g) + \Pi_2(f,g) + \Pi_3(f,g), 
\end{equation}
where
$$ \Pi_1(f,g) : =  \sum_{k=1}^{\infty} d_k f d_k g, \quad \Pi_2(f,g) : = \sum_{k=1}^{\infty} f_{k-1}d_k g  \quad \text{and} \quad \Pi_3(f,g) : =  \sum_{k=1}^{\infty} g_{k-1}d_k f. $$

We shall estimate $\Pi_1(f,g), \Pi_2(f,g)$, $\Pi_3(f,g)$ separately. To do so, we shall make use of the atomic decomposition of $H^1(\Omega)$. It follows from our arguments below that the operators $\Pi_1 $, $ \Pi_2 $ and $\Pi_3 $ are well-defined (in a pointwise sense) on the product space $H^1(\Omega) \times BMO(\Omega)$. Hence, the proof of Theorem will follow from the boundedness properties of  $\Pi_1 $, $ \Pi_2 $ and $\Pi_3 $, \eqref{dec} and a limiting argument.

In \S \ref{new_proof_Pi_3}, we present a direct way to deal with $\Pi_3(f,g)$, which avoids the use of the atomic decomposition.

\begin{proof}[Proof of Theorem \ref{Theorem A}]
By Theorem \ref{dec of H}, there always exist two functions $ f^h $ and $ f^d $ such that $ f= f^h + f^d $, where $ f^h \in h^1(\Omega) $ and $ f^d \in h^1_d(\Omega) $. For any such decomposition of $f$, since $f^h\in h^1(\Omega)$, there exist $ \lk{\lambda_j}_{j\geqslant 1} \subset \real$ and simple $(1,2)$-atoms $ \lk{a^j}_{j \geqslant 1} $ such that
\begin{equation}\label{atom_dec}
 f^h = \sum_{j=1}^{\infty} \lambda_j a^j,\quad \| f^h \|_{h^1(\Omega)} \approx \sum_{j=1}^{\infty} |\lambda_j|,
\end{equation}
where we assume $\supp (a^j) \subset A_{n_j} $ and $ A_{n_j} \in \F_{n_j} $ with $P(A_{n_j})>0$ for $j\geq 1$. Then
\begin{equation}\label{dec of pi}
\Pi_i(f,g) = \sum_{j=1}^{\infty}\lambda_j \Pi_i(a^j,g) + \Pi_i(f^d,g),\quad i = 1,2,3.
\end{equation}

\subsection{Estimates for $ \Pi_1(f^h,g) $ and $ \Pi_1(f^d,g) $}\label{Pi_1_est}

We are going to show that $ \Pi_1 $ is a bounded bilinear operator from $ H^1(\Omega) \times BMO(\Omega) $ to $ L^1(\Omega) $. In fact, the boundedness of $\Pi_1$ follows naturally from the duality between $H^1(\Omega)$ and $BMO(\Omega)$, i.e. Theorem \ref{duality} (see \cite{Ga}).  For the reader's convenience, we give a proof based on atomic decompositions.

We first focus on $\Pi_1(f^h,g)$, which can further be decomposed into atoms as described in \eqref{atom_dec}. It thus suffices to consider
 $$\Pi_1(a^j,g) = \sum_{k=1}^{\infty} d_k a^j d_k g,  $$
 which can further be localized as $d_k a^j=1_{A_{n_j}}d_k a^j$ when $k\geq n_j+1$ since $A_{n_j}\in \F_{n_j}$, namely
$$
\Pi_1(a^j,g)  = \sum_{k=n_j+1}^{\infty} 1_{A_{n_j}} d_k a^j d_k g.
$$
Now, by applying the Cauchy-Schwarz inequality, we derive the estimate

\begin{align*} 
   \| \Pi_1(a^j,g) \|_1 & = \ce\sk{ \jdz{  \sum_{k=n_j+1}^{\infty} 1_{A_{n_j}} d_k a^j d_k g } }  \\
   & \leqslant \mk{ \ce \sk{ \sum_{k=n_j+1}^{\infty} |d_ka^j|^2 } }^{\frac{1}{2}} \mk{ \ce \sk{ \sum_{k=n_j+1}^{\infty}1_{A_{n_j}} |d_kg|^2 } }^{\frac{1}{2}}   \\
   & \leqslant \| a^j \|_2\mk{ \ce \ce_{n_j} \sk{ \sum_{k=n_j+1}^{\infty}1_{A_{n_j}} |d_kg|^2 } }^{\frac{1}{2}}  \\
   & \leqslant P(A_{n_j})^{-\frac{1}{2}}\mk{ \ce \sk{ 1_{A_{n_j}}\ce_{n_j} \sk{ \sum_{k=n_j+1}^{\infty} |d_kg|^2} } }^{\frac{1}{2}} \notag\\
   & \leqslant P(A_{n_j})^{-\frac{1}{2}}\| g \|_{bmo(\Omega)}P(A_{n_j})^{\frac{1}{2}} 
 \end{align*}
where the fourth inequalitiy follows from the definition of the atom. Hence, we deduce from the definition of the $bmo-$norm that 
\begin{equation}\label{pi-1aj}
\| \Pi_1(a^j,g) \|_1 \leq \| g \|_{bmo(\Omega)}. 
\end{equation}
By using (\ref{pi-1aj}) and (\ref{dec of pi}), we have by Theorem \ref{Bb}
\begin{align*}
  \| \Pi_1(f,g) \|_1 & \leqslant \sum_{j=1}^{\infty} |\lambda_j|\| g \|_{bmo(\Omega)} + \norm{ \sum_{k=1}^{\infty} d_kf^d d_k g }_1 \notag\\
  &\lesssim \| f^h \|_{h^1}\| g \|_{bmo(\Omega)} + \sk{\sup\limits_{k\geq 1} \| d_k g \|_{\infty}}\sk{\sum_{k=1}^{\infty} \| d_kf^d \|_1 } \notag\\
  & \lesssim \sk{\| f^h \|_{h^1(\Omega)} + \| f^d \|_{h_1^d(\Omega)}}  \| g \|_{BMO(\Omega)}.
\end{align*}
Since the decomposition of $ f = f^h + f^d $ is arbitrary, by Theorem \ref{dec of H} we conclude
\begin{equation}\label{pi-1}
  \| \Pi_1(f,g) \|_1 \lesssim \| f \|_{H^1(\Omega)} \| g \|_{BMO(\Omega)}.
\end{equation}

\subsection{Estimates for $ \Pi_2(f^h,g) $ and $ \Pi_2(f^d,g) $} We are going to show that $ \Pi_2 $ is a bounded bilinear operator from $ H^1(\Omega) \times BMO(\Omega) $ to $ H^1(\Omega) $. Arguing as in section \ref{Pi_1_est}, we perform the localization on each term
$$
\Pi_2(a^j,g) = \sum_{k=1}^{\infty} a^j_{k-1} d_k g = \sum_{k=n_j+2}^{\infty} 1_{A_{n_j}} a^j_{k-1} d_k g.
$$
It is easy to verify that
$$
d_k(\Pi_2(a^j,g)) = a^j_{k-1} d_k g, \quad k \geq n_j+2 \ \text{ and } \ d_k(\Pi_2(a^j,g)) = 0 ,\quad 1\leqslant k \leqslant n_j+1 .
$$
We consider the corresponding square function
\begin{align*}
S\sk{ \Pi_2(a^j,g) } & = \sk{ \sum_{k=n_j+2}^{\infty} \sk{|a^j_{k-1}|^21_{A_{n_j}} |d_k g|^2} }^{\frac{1}{2}} \\
& \leqslant |(a^j)^*|\sk{ \sum_{k=n_j+2}^{\infty} 1_{A_{n_j}} \sk{|d_k g|^2} }^{\frac{1}{2}}.
\end{align*}
Then by invoking the Cauchy-Schwarz inequality, we have that 
\begin{align*} 
  \| \Pi_2(a^j,g) \|_{H^1(\Omega)} & = \ce\mk{ S\sk{ \Pi_2(a^j,g) } }    \\
   & \leqslant \| (a^j)^* \|_2 \mk{ \ce\sk{ \sum_{k=n_j+2}^{\infty} 1_{A_{n_j}} (|d_k g|^2)}  }^{\frac{1}{2}}  \\
   & \leqslant \| a^j \|_2 \mk{ \ce \sk{1_{A_{n_j}} \ce_{n_j}\sk{\sum_{k=n_j+2}^{\infty} |d_k g|^2}} }^{\frac{1}{2}} \\
   & \leqslant P(A_{n_j})^{-\frac{1}{2}}\| g \|_{BMO(\Omega)}P(A_{n_j})^{\frac{1}{2}} \end{align*}
and hence,
\begin{equation}\label{pi-2}
 \| \Pi_2(a^j,g) \|_{H^1(\Omega)} \leqslant \| g \|_{BMO(\Omega)}.
\end{equation}\label{pi-2}

Similarly, by Theorem \ref{Bb}
\begin{align*} 
\norm{ \Pi_2(f^d,g) }_{H^1(\Omega)} & =  \ce\mk{ S\sk{ \Pi_2(\sum_{m=1}^{\infty}d_mf^d,g) } } \\
 & \leqslant \sum_{m=1}^{\infty}\ce\mk{ S\sk{ \Pi_2(d_mf^d,g) } }  \\
 & = \sum_{m=1}^{\infty}\ce\mk{ \ce_m\sk{ \sum_{k=m+1}^{\infty} |d_mf^d|^2 |d_k g|^2 }^{\frac12}}   \\
 & =  \sum_{m=1}^{\infty}\ce\mk{ |d_mf^d| \ce_m\sk{ \sum_{k=m+1}^{\infty} |d_k g|^2 }^{\frac12} }  \\
   & \leqslant  \sum_{m=1}^{\infty}\sk{\| d_mf^d \|_1   \norm{ \ce_m \sk{\sum_{k=m+1}^{\infty} |d_k g|^2} }_{\infty}^{\frac12} } 
\end{align*}
and hence,
\begin{equation}\label{pi-2fd}
\norm{ \Pi_2(f^d,g) }_{H^1(\Omega)}   \leqslant \| f^d \|_{h^1_d(\Omega)} \|g\|_{BMO(\Omega)}.
\end{equation}
By using \eqref{pi-2fd}, \eqref{pi-2} and \eqref{dec of pi}, we have by Theorem \ref{dec of H}
\begin{align*}
\| \Pi_2(f,g) \|_{H^1(\Omega)} &\leq \norm{ \Pi_2(f^h,g) }_{H^1(\Omega)}+\norm{ \Pi_2(f^d,g) }_{H^1(\Omega)}\notag\\
&\leqslant \sum_{j=1}^{\infty}|\lambda_j|\| g \|_{BMO(\Omega)} + \| f^d \|_{h_1^d(\Omega)}\|g\|_{BMO(\Omega)}\\ &\lesssim \sk{\| f^h \|_{h^1(\Omega)} + \| f^d \|_{h^1_d(\Omega)} } \| g \|_{BMO(\Omega)}.
\end{align*}
Since the decomposition of $ f = f^h + f^d $ is arbitrary, by Theorem \ref{dec of H} we conclude
\begin{equation}\label{pi--2}
  \| \Pi_2(f,g) \|_{H^1(\Omega)} \lesssim \| f \|_{H^1(\Omega)}\|g\|_{BMO(\Omega)}.
\end{equation}

\subsection{Estimates for $ \Pi_3(f^h,g) $ and $ \Pi_3(f^d,g) $}\label{pi}
We are going to show that $ \Pi_3 $ is a bounded bilinear operator from $ H^1(\Omega) \times BMO(\Omega) $ to $ H^{\Phi}(\Omega) $. To this end, we first deal with
$ \Pi_3(f^h,g) $. Note that
\begin{align*}
S(\Pi_3(f^h,g)) & = S\sk{ \sum_{k=1}^{\infty} \sum_{j=1}^{\infty} \lambda_jg_{k-1} d_ka^j }
  \leq  \sum_{j=1}^{\infty} \lambda_jS\sk{\sum_{k=1}^{\infty}g_{k-1} d_ka^j}  \\
 & = \sum_{j=1}^{\infty}\lambda_j\sk{ \sum_{k=n_j+1}^{\infty} |g_{k-1}|^2 |d_ka^j|^2 }^{\frac{1}{2}}  \\
& \leq \sum_{j=1}^{\infty}\lambda_j\sk{ \sum_{k=n_j+1}^{\infty} |g_{k-1}-g_{n_j}|^2| d_ka^j|^2 }^{\frac{1}{2}} + \sum_{j=1}^{\infty}\lambda_j|g_{n_j}|S( a^j)  \\
& =: I_1 + I_2 .
\end{align*}
It thus suffices to handle $I_1$ and $I_2$. For $I_1$, we have
\begin{align*} 
\ce(I_1) & \leqslant \sum_{j=1}^{\infty}|\lambda_j| \ce \sk{ \sum_{k=n_j+1}^{\infty} 1_{A_{n_j}} |g_{k-1}-g_{n_j}|^2 |d_ka^j|^2 }^{\frac{1}{2}}  \\
 & \leqslant \sum_{j=1}^{\infty}|\lambda_j| \mk{\ce \sk{\sum_{k=n_j+1}^{\infty} 1_{A_{n_j}}|g_{k-1}-g|^2 |d_ka^j|^2}^{\frac{1}{2}} + \ce \sk{\sum_{k=n_j+1}^{\infty} 1_{A_{n_j}}|g-g_{n_j}|^2 |d_ka^j|^2}^{\frac{1}{2}} } \\
&\leqslant \sum_{j=1}^{\infty}|\lambda_j|\lk{ P(A_{n_j})^{\frac{1}{2}} \mk{\ce \sk{\sum_{k=n_j+1}^{\infty} |g_{k-1}-g|^2 |d_ka^j|^2}}^{\frac{1}{2}} + \ce \sk{1_{A_{n_j}}|g-g_{n_j}|S(a^j)}  }  \\
 & \leqslant \sum_{j=1}^{\infty}|\lambda_j| \lk{ P(A_{n_j})^{\frac{1}{2}} \mk{\ce \sk{\sum_{k=n_j+1}^{\infty}|d_ka^j|^2 \ce_{k} (|g_{k-1}-g|^2) } }^{\frac{1}{2}} + \| a^j \|_2P(A_{n_j})^{\frac{1}{2}}\| g \|_{BMO(\Omega)} }  \\
 & \leqslant 2\sum_{j=1}^{\infty}|\lambda_j| P(A_{n_j})^{\frac{1}{2}}\| g \|_{BMO(\Omega)}\| a^j \|_2 
 \end{align*}
and so,
\begin{equation}\label{Pi-3-1}
 \ce(I_1) \lesssim \| f^h \|_{h^1(\Omega)}\| g \|_{BMO(\Omega)}.
 \end{equation}
Next, we obtain an estimate for $I_2$. To this end, notice that
\begin{align*}
  I_2  &\leq \sk{\sum_{j=1}^{\infty} 1_{A_{n_j}}|\lambda_j| S(a^j)}\cdot |g| + \sum_{j=1}^{\infty} |\lambda_j|1_{A_{n_j}}|g_{n_j}-g| S(a^j) \\
   & =: I_3 + I_4.
\end{align*}
Since $a^j$ is a simple $(1,2)$-atom, we have $ \|1_{A_{n_j}}S(a^j)\|_1\leq 1$ and
$$\norm{ \sum\limits_{j=1}^{\infty}1_{A_{n_j}} |\lambda_j| S(a^j)}_1\leq \sum\limits_{j=1}^{\infty} |\lambda_j| \lesssim  \| f^h \|_{h^1(\Omega)}. $$
By Lemma \ref{HolderP}, we have
\begin{equation}\label{i-3}
\| I_3 \|_{L^{\Phi}(\Omega)} \lesssim \norm{ \sum_{j=1}^{\infty}1_{A_{n_j}} |\lambda_j| S(a^j) }_1\| g \|_{BMO(\Omega)} \lesssim \| f^h \|_{h^1(\Omega)}\| g \|_{BMO(\Omega)}.
\end{equation}

The following estimate is implicit in the proof of \eqref{Pi-3-1}:
\begin{equation}\label{i-4}
  \ce(I_4) \leqslant \sum_{j=1}^{\infty}|\lambda_j| P(A_{n_j})^{\frac{1}{2}}\| g \|_{BMO(\Omega)}\| a^j \|_2 \lesssim \| f^h \|_{h^1(\Omega)}\| g \|_{BMO(\Omega)}.
\end{equation}

By combining \eqref{i-3} and \eqref{i-4}, we deduce that
\begin{equation}\label{Pi-3-2}
  \| I_2 \|_{L^{\Phi}(\Omega)} \lesssim \| f^h \|_{h^1(\Omega)} \| g \|_{BMO(\Omega)}.
\end{equation}
In conclusion, by (\ref{Pi-3-1}) and (\ref{Pi-3-2}) we get
\begin{equation}\label{Pi-3-fh}
	\|\Pi_3(f^h,g)\|_{H^\Phi(\Omega)} \lesssim \| f^h \|_{h^1(\Omega)} \| g \|_{BMO(\Omega)}.
\end{equation}

It remains to deal with $ \Pi_3(f^d,g) $. We have
\begin{align*}
S(\Pi_3(f^d,g)) & = \sk{ \sum_{k=1}^{\infty}|g_{k-1}|^2|d_kf^d|^2  }^{\frac{1}{2}} \leqslant \sum_{k=1}^{\infty}|g_{k-1}||d_kf^d| \\
& \leq \sum_{k=1}^{\infty}|g_{k-1}-g||d_kf^d| + |g|\sk{\sum_{k=1}^{\infty}|d_kf^d|}.
\end{align*}
By Lemma \ref{HolderP},
\begin{equation}\label{i-6}
  \norm{ g\sk{\sum_{k=1}^{\infty}|d_kf^d|} }_{L^{\Phi}(\Omega)} \lesssim \sk{ \sum_{k=1}^{\infty} \| d_kf^d \|_1 }\| g \|_{BMO(\Omega)}=\|f^d\|_{h^1_d(\Omega)}\| g \|_{BMO(\Omega)}.
\end{equation}

For the remaining term, we have
\begin{align*}\label{i-5}
  \ce \sk{ \sum_{k=1}^{\infty}|g_{k-1}-g||d_kf^d| } & = \ce \sk{ \sum_{k=1}^{\infty}|d_kf^d|\ce_{k}|g_{k-1}-g| }  \\
   & \leqslant \| g \|_{BMO(\Omega)}\sk{\sum_{k=1}^{\infty}\| d_kf^d \|_1} 
\end{align*}
and so
\begin{equation}\label{i-5}
  \ce \sk{ \sum_{k=1}^{\infty}|g_{k-1}-g||d_kf^d| }  
 \leqslant \|f^d\|_{h^1_d(\Omega)}\| g \|_{BMO(\Omega)}.
\end{equation}
Hence, by \eqref{i-6} and \eqref{i-5}, we get
\begin{equation}\label{Pi-3-fd}
\| \Pi_3(f^d,g) \|_{H^{\Phi}(\Omega)} \lesssim\|f^d\|_{h^1_d(\Omega)}\| g \|_{BMO(\Omega)}.
\end{equation}
By \eqref{Pi-3-fh} and \eqref{Pi-3-fd}, we obtain
$$
\| \Pi_3(f,g) \|_{H^{\Phi}(\Omega)} \lesssim \sk{\|f^h\|_{h^1(\Omega)} + \| f^d \|_{h^1_d(\Omega)}}\| g \|_{BMO(\Omega)}.
$$
Thus we conclude
\begin{equation}\label{pi-3}
\| \Pi_3(f,g) \|_{H^{\Phi}(\Omega)} \lesssim \| f \|_{H^1(\Omega)}\| g \|_{BMO(\Omega)}.
\end{equation}
This completes the proof of Theorem \ref{Theorem A}
\end{proof}

\subsection{Another method for handling $\Pi_3(f,g)$}\label{new_proof_Pi_3}

In this section we present a different method for dealing with $\Pi_3(f,g)$, which is much neater and simpler than the one presented above, and it relies on the following theorem which has been shown in \cite{Ga}.
\begin{thm}\label{mBMO}
  If $ g \in BMO(\Omega) $ and $g_0=0$, then  $(g^*)_0\lesssim \|g\|_{BMO(\Omega)}$ and $ g^* \in BMO(\Omega) $.  Moreover, $ \| g^* \|_{BMO(\Omega)} \lesssim \| g \|_{BMO(\Omega)}. $
\end{thm}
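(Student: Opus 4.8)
The plan is to reduce the statement to two quantitative estimates and to prove both using only Doob's $L^2$ maximal inequality in its conditional form together with the definition of the martingale $BMO$ norm; in particular the John--Nirenberg inequality of Theorem \ref{MJN} is not needed. First I record that, by definition, $BMO(\Omega)\subset L^2(\Omega)$, so $g\in L^2(\Omega)$ and hence $g^*\in L^2(\Omega)$ by Doob's inequality; this makes all the conditional second moments below finite. The two estimates to establish are $\ce(g^*)\lesssim \|g\|_{BMO(\Omega)}$, which gives the bound on $(g^*)_0=\ce(g^*)$, and $\sup_{n\ge 1}\big\|\ce_n|g^*-(g^*)_{n-1}|^2\big\|_\infty\lesssim \|g\|_{BMO(\Omega)}^2$, which gives both $g^*\in BMO(\Omega)$ and the asserted norm bound. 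Throughout I use the paper's convention $(g^*)_{n-1}=\ce_{n-1}(g^*)$.

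The engine is an elementary estimate for centered increments: for $m\le k$ one has $\ce_m|g_k-g_{m-1}|^2\le \ce_m|g-g_{m-1}|^2\le \|g\|_{BMO(\Omega)}^2$. The first inequality comes from the orthogonality of the martingale differences of $g$ under $\ce_m$ (after conditioning at level $k$ the cross term in $g-g_{m-1}=(g-g_k)+(g_k-g_{m-1})$ vanishes), and the second is exactly the definition of the $BMO$ norm, combined with the tower property when $m=n-1$. Next I separate the ``past'' from the ``oscillating tail'' of the maximal function by setting, for each fixed $n\ge 1$,
$$ M_{n-1}:=\sup_{0\le k\le n-1}|g_k| \qquad\text{and}\qquad G_n:=\sup_{k\ge n}|g_k-g_{n-1}|. $$
Here $M_{n-1}$ is $\F_{n-1}$-measurable, and a direct pointwise check gives $0\le g^*-M_{n-1}\le G_n$. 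Applying the conditional Doob $L^2$ inequality to the martingale $(g_k-g_{n-1})_{k\ge n-1}$, first from level $n$ and then from level $n-1$, and inserting the centered increment estimate, I obtain $\ce_n(G_n^2)\le 4\|g\|_{BMO(\Omega)}^2$ and $\ce_{n-1}(G_n^2)\le 4\|g\|_{BMO(\Omega)}^2$.

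It remains to assemble these bounds. For the estimate on $(g^*)_0$ I specialize to $n=1$: since $g_0=0$ we have $M_0=0$ and $G_1=g^*$, so $\ce_1\big((g^*)^2\big)\le 4\|g\|_{BMO(\Omega)}^2$, whence $\ce(g^*)\le \big(\ce((g^*)^2)\big)^{1/2}\le 2\|g\|_{BMO(\Omega)}$ by Jensen's inequality. For the $BMO$ bound I use $M_{n-1}$ as an auxiliary $\F_{n-1}$-measurable center and then change centers to the genuine one: writing $g^*-(g^*)_{n-1}=(g^*-M_{n-1})+(M_{n-1}-(g^*)_{n-1})$, the first summand is controlled by $\ce_n(G_n^2)$, while the second is $\F_{n-1}$-measurable and equals $\ce_{n-1}(M_{n-1}-g^*)$, hence is controlled by $\ce_{n-1}(G_n^2)$ via Jensen. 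Combining yields $\ce_n|g^*-(g^*)_{n-1}|^2\le 16\|g\|_{BMO(\Omega)}^2$ for every $n$, i.e. $\|g^*\|_{BMO(\Omega)}\le 4\|g\|_{BMO(\Omega)}$.

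I expect the main subtlety to be precisely this last center-change: the $BMO$ norm is measured against $(g^*)_{n-1}=\ce_{n-1}(g^*)$, the conditional expectation of the \emph{full} maximal function, whereas the natural comparison object is the running maximum $M_{n-1}$. Reconciling the two forces one to estimate the tail oscillation $G_n$ both at level $n$ and at level $n-1$, which is why the conditional Doob inequality must be invoked at two consecutive times. A minor point to state explicitly is the interpretation $(g^*)_0=\ce(g^*)$, under which the first assertion is simply the $n=1$ instance of the tail estimate.
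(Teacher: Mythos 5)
Your proof is correct, but there is nothing in the paper to compare it against: the paper does not prove Theorem \ref{mBMO} at all, it quotes it from Garsia's book \cite{Ga} and uses it as a black box in \S\ref{new_proof_Pi_3} to simplify the treatment of $\Pi_3$. What your argument supplies is therefore a self-contained, purely $L^2$ proof of the cited result. Your route --- the splitting $g^*-(g^*)_{n-1}=(g^*-M_{n-1})+(M_{n-1}-(g^*)_{n-1})$ with the running maximum $M_{n-1}$ as an $\F_{n-1}$-measurable intermediate center, combined with the two conditional Doob estimates $\ce_n(G_n^2)\le 4\|g\|_{BMO(\Omega)}^2$ and $\ce_{n-1}(G_n^2)\le 4\|g\|_{BMO(\Omega)}^2$ --- is sound: I checked the orthogonality estimates, the pointwise bound $0\le g^*-M_{n-1}\le G_n$, the identity $M_{n-1}-(g^*)_{n-1}=\ce_{n-1}(M_{n-1}-g^*)$, and the final assembly giving $\ce_n|g^*-(g^*)_{n-1}|^2\le 16\|g\|_{BMO(\Omega)}^2$, and they all hold. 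Your reading of $(g^*)_0$ as $\ce(g^*)$ is also the right one; it is the only reading under which the first assertion is non-trivial, and it is consistent with how the theorem is used to bound $J_1,J_2$ in \S\ref{new_proof_Pi_3}. The merits of your approach are that it avoids the John--Nirenberg inequality (Theorem \ref{MJN}) entirely and yields explicit constants, namely $(g^*)_0\le 2\|g\|_{BMO(\Omega)}$ and $\|g^*\|_{BMO(\Omega)}\le 4\|g\|_{BMO(\Omega)}$.

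One point deserves a more careful write-up. Your ``engine'' is stated with the centering one level below the conditioning, $\ce_m|g_k-g_{m-1}|^2\le\ce_m|g-g_{m-1}|^2\le\|g\|_{BMO(\Omega)}^2$, but the Doob application ``from level $n-1$'' requires the same-level bound $\ce_{n-1}|g_k-g_{n-1}|^2\le\|g\|_{BMO(\Omega)}^2$, which is not literally an instance of that display. This is exactly where the tower property enters:
\begin{equation*}
\ce_{n-1}|g_k-g_{n-1}|^2\;\le\;\ce_{n-1}|g-g_{n-1}|^2\;=\;\ce_{n-1}\bigl(\ce_n|g-g_{n-1}|^2\bigr)\;\le\;\|g\|_{BMO(\Omega)}^2 .
\end{equation*}
Your parenthetical ``combined with the tower property when $m=n-1$'' gestures at this, but the indices as written do not match the bound actually used; spelling out the line above removes the only ambiguity in an otherwise complete argument.
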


We begin with a pointwise estimate for $S ( \Pi_3(f,g))$. Towards this aim, note that $ d_k(\Pi_3(f,g)) = g_{k-1}d_kf $, which implies that
  $$ S(\Pi_3(f,g)) = \sk{ \sum_{k=1}^{\infty} |g_{k-1}|^2|d_kf|^2 }^{\frac12}
  \leqslant |g^*|S(f) \leqslant J_1+J_2, $$
 where
 $$  J_1 :=  |g^*-(g^*)_0|S(f) \quad \text{and} \quad  J_2 := S(f)\|g\|_{BMO(\Omega)}. $$
 Clearly,
\begin{equation}\label{ms2}
\|J_2\|_1\lesssim \| f \|_{H^1(\Omega)} \| g \|_{BMO(\Omega)}.
\end{equation}

By Theorem \ref{mBMO}, we get $ g^* \in BMO(\Omega) $, and hence by Lemma \ref{HolderP}
\begin{equation}\label{ms1}
  \|J_1 \|_{L^{\Phi}(\Omega)} \lesssim \| g^* \|_{BMO(\Omega)}\| S(f) \|_1 \lesssim \| f \|_{H^1(\Omega)}\| g \|_{BMO(\Omega)}.
\end{equation}
As $S(\Pi_3(f,g)) \leq J_1 + J_2$, by combining 
\eqref{ms2} with \eqref{ms1}, and by the fact $L^1(\Omega)\subset L^\Phi(\Omega)$, we conclude
$$ \| \Pi_3(f,g)\|_{H^\Phi(\Omega)}=\| S(\Pi_3(f,g))\|_{L^\Phi(\Omega)} \lesssim \| f \|_{H^1(\Omega)}\| g \|_{BMO(\Omega)}, $$
as desired.

We would like to end this section with the comparison between our proof and the one provided in \cite{JXYZ}. Though both arguments heavily rely on the atomic decomposition of $H^1(\Omega)$, they further use weak atom decomposition for the diagonal Hardy space while our proof proceeds more directly. Moreover, the treatment of the most technical term $\Pi_3$ is significantly simplified in this section thanks to Theorem \ref{mBMO}.   

\section{Bilinear decompositions on $H^p(\Omega)\times \Lambda_1(\alpha_p)$ for $ 0<p<1 $ }\label{case2}
In this section, we give a proof of Theorem \ref{Theorem C}. Arguing as in the proof of  Theorem \ref{Theorem A}, it suffices to establish appropriate estimates for the bilinear operators $\Pi_1$, $\Pi_2$ and $\Pi_3$.

Let $(\Omega, \F, P)$ be a fixed probability space.
If we consider the filtration $ \F_0 = \{ \emptyset, \Omega \} $ and $ \F_k = \F$ for all $ k \geqslant 1 $,  then $H^p(\Omega) = L^p(\Omega)$ for $0<p<\infty$. It is well-known that $ (L^p(\Omega))^* \neq \{ 0 \} $ if and only if the probability space $ (\Omega, \F, P) $ contains at least one atom with non-zero measure when $0<p<1$. This means that $(H^p(\Omega))^*=\{0\}$ may occur. Therefore, we are only concerned with regular martingales where every $ \F_k$ is generated by countably many atoms.

To prove Theorem \ref{Theorem C}, we start with the following lemma, which holds for general martingales that are not necessarily regular. This shall be familiar to the experts in the area, but we will enclose the proof here for the sake of completeness. 
\begin{lemma}\label{lem4.2}
For any $0<p<1$, we have $L^1(\Omega)\subset H^p_m(\Omega)$.
\end{lemma}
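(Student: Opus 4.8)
The plan is to reduce everything to the quantitative estimate $\| f^* \|_p \lesssim_p \| f \|_1$ for each $f \in L^1(\Omega)$. Indeed, by Definition \ref{hardy} the space $\{ f \in L^1(\Omega) : \| f^* \|_p < \infty \}$ is, by construction, dense in $H^p_m(\Omega)$, so once I know that every $L^1$-function already belongs to this subspace with a controlled quasi-norm, the continuous inclusion $L^1(\Omega) \subset H^p_m(\Omega)$ follows at once. Thus the entire content of the lemma is the maximal estimate, and the mechanism is the classical passage from a weak-type bound to an $L^p$-bound for $p<1$, which is available precisely because we work on a \emph{probability} space.

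First I would invoke Doob's weak-type $(1,1)$ maximal inequality. For $f \in L^1(\Omega)$ the associated martingale $\{ f_k \}_{k \geq 0}$ is $L^1$-bounded, since $\| f_k \|_1 = \ce | \ce_k(f) | \leq \ce\, \ce_k(|f|) = \| f \|_1$ for every $k$. Doob's inequality then yields
$$ \lambda \, P(f^* > \lambda) \leq \sup_{k \geq 0} \| f_k \|_1 \leq \| f \|_1 \qquad \text{for all } \lambda > 0, $$
so that $f^*$ lies in weak-$L^1$ with weak norm at most $\| f \|_1$.

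Next I would convert this into an $L^p$-bound. Writing the quasi-norm through the distribution function and using the two-sided control $P(f^* > \lambda) \leq \min\{ 1,\ \| f \|_1 / \lambda \}$ (the bound $P \leq 1$ being exactly where finiteness of the measure enters), I split the integral at the threshold $\lambda = \| f \|_1$:
$$ \| f^* \|_p^p = \int_0^\infty p \lambda^{p-1} P(f^* > \lambda) \, d\lambda \leq \int_0^{\| f \|_1} p \lambda^{p-1} \, d\lambda + \| f \|_1 \int_{\| f \|_1}^\infty p \lambda^{p-2} \, d\lambda. $$
Both integrals converge precisely because $0 < p < 1$, and a direct computation gives $\| f^* \|_p^p \leq (1-p)^{-1} \| f \|_1^p$, i.e. $\| f^* \|_p \leq (1-p)^{-1/p} \| f \|_1 < \infty$.

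There is no serious obstacle here; the only point that deserves a line of care is the interaction with the completion in Definition \ref{hardy}. To guarantee that $L^1(\Omega)$ embeds genuinely into the completion $H^p_m(\Omega)$ rather than through a non-injective map, I would observe that $\| \cdot \|_{H^p_m}$ separates points on $L^1(\Omega)$: if $f^* = 0$ almost everywhere, then every $f_k = \ce_k(f)$ vanishes, whence $f = \lim_k f_k = 0$ by the martingale convergence theorem (recall $\sigma(\cup_k \F_k) = \F$). Together with the estimate $\| f^* \|_p \lesssim_p \| f \|_1$ established above, this gives the desired continuous inclusion $L^1(\Omega) \subset H^p_m(\Omega)$.
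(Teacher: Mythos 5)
Your proof is correct and follows essentially the same route as the paper: Doob's maximal inequality combined with the layer-cake formula for $\|f^*\|_p^p$, splitting the integral of the distribution function at a threshold, which yields the identical bound $\|f^*\|_p \le (1-p)^{-1/p}\|f\|_1$. The only (harmless) difference is that the paper keeps the refined form $P(f^*>\lambda)\le \lambda^{-1}\int_{\{f^*>\lambda\}}|f|\,dP$ and applies Fubini in the large-$\lambda$ part, whereas you use the plain weak-type bound $P(f^*>\lambda)\le \|f\|_1/\lambda$; your closing remark that the quasi-norm separates points on $L^1(\Omega)$, so that the map into the completion is injective, is a careful touch the paper leaves implicit.
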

\begin{proof}
 By Doob's maximal inequality, for any $f\in L^1(\Omega)$ and for any $ \lambda > 0 $ we have
\begin{equation}\label{JN}
  P(f^*>\lambda) \leqslant \dfrac{1}{\lambda}\int_{\lk{f^*>\lambda}}|f|dP.
\end{equation}
Without loss of generality, we may assume $ \| f \|_1 \leqslant 1 $. Then
\begin{align*}
  \| f^* \|_p^p & = \int_{\Omega}|f^*|^p dP = p \int_{0}^{\infty} P(|f^*|>\lambda)\lambda^{p-1} d\lambda \\
   & = p \int_{0}^{1} P(f^*>\lambda)\lambda^{p-1} d\lambda + p\int_{1}^{\infty}P(f^*>\lambda)\lambda^{p-1} d\lambda \\
   & \leqslant p \int_{0}^{1} \lambda^{p-1} d\lambda + p\int_{1}^{\infty}\dfrac{1}{\lambda}\sk{ \int_{\lk{f^*>\lambda}}|f|dP }\lambda^{p-1} d\lambda \\
   & = 1 + p\int_{\{f^*>1\}}|f|\sk{ \int_{1}^{f^*} \lambda^{p-2}d\lambda }dP \\
   & = 1 + \dfrac{p}{1-p}\int_{\{f^*>1\}}|f|\sk{ 1-|f^*|^{p-1} }dP \\
   & \leqslant 1 + \dfrac{p}{1-p}\int_{\{f^*>1\}}|f|dP \leqslant \dfrac{1}{1-p}.
\end{align*}
This implies that for any $f\in L^1(\Omega)$
$$
  \| f \|_{H^p_m(\Omega)} \leqslant \big(\dfrac{1}{1-p}\big)^\frac{1}{p} \| f \|_1,$$
which yields the desired result.
\end{proof}

For regular martingales, we have $L^1(\Omega)\subset H^p_m(\Omega)=H^p(\Omega)=h^p(\Omega)$. In what follows, the martingales are always assumed to be regular and every $ \F_k$ is generated by countable atoms.

\begin{corollary}\label{at of hp} For $0<p<1$ and $1\leq q\leq \8$, $H^p(\Omega) = H^{p, q}_{\rm at}(\Omega)$.
\end{corollary}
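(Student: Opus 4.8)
The plan is to establish the corollary by proving the two inclusions $H^{p,q}_{\rm at}(\Omega) \subset H^p(\Omega)$ and $H^p(\Omega) \subset H^{p,q}_{\rm at}(\Omega)$, relying on the already-recorded facts that for regular filtrations $H^p(\Omega) = h^p(\Omega)$ and that $h^p(\Omega) = H^{p,q}_{\rm at}(\Omega)$ for $0<p\leqslant 1$ and $1<q\leqslant \infty$. The only genuinely new content is the endpoint case $q=1$, together with reconciling the completed space $H^p(\Omega)$ from Definition \ref{hardy} with the atomic space of Definition \ref{ath}; everything else is a citation.

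First I would treat the direction $H^{p,q}_{\rm at}(\Omega) \subset H^p(\Omega)$. Given a simple $(p,q)$-atom $a$ with $\supp(a)\subset A \in \F_n$ and $\ce_n(a)=0$, I would show $\|a\|_{H^p(\Omega)}\lesssim 1$ with a constant independent of the atom and of $q$. For $q\geqslant 2$ this is standard: by the support and cancellation conditions $S(a) = 1_A S(a)$, so H\"older's inequality on the set $A$ gives $\|S(a)\|_p \leqslant \|S(a)\|_q\, P(A)^{1/p - 1/q} \lesssim \|a\|_q\, P(A)^{1/p-1/q} \leqslant 1$, using the $L^2$-boundedness of the square function (hence $L^q$ for $q\geqslant 2$). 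For the remaining endpoint $1\leqslant q<2$, I would instead invoke Lemma \ref{lem4.2}: since $a\in L^1(\Omega)\subset L^q(\Omega)$, regularity gives $L^1(\Omega)\subset H^p_m(\Omega)=H^p(\Omega)$, and the quantitative bound $\|a\|_{H^p(\Omega)}\lesssim_p \|a\|_1\leqslant \|a\|_q\, P(A)^{1-1/q}$ combined with the normalization $\|a\|_q \leqslant P(A)^{1/q-1/p}$ yields $\|a\|_{H^p(\Omega)}\lesssim_p P(A)^{1-1/p}$, which is not bounded — so for the low-$q$ case one must instead argue through $h^p$ directly. The cleaner route for all $1\leqslant q\leqslant\infty$ is: a $(p,q)$-atom is automatically a $(p,2)$-atom up to a constant when $q\geqslant 2$, and for $q<2$ one uses that for regular martingales the atomic spaces $H^{p,q}_{\rm at}(\Omega)$ all coincide with $h^p(\Omega)$ by the cited result of \cite{WF1}; then summing $f=\sum_j \lambda_j a^j$ with $\sum_j|\lambda_j|^p<\infty$ gives $\|f\|_{H^p(\Omega)}^p \leqslant \sum_j |\lambda_j|^p \|a^j\|_{H^p(\Omega)}^p \lesssim \sum_j|\lambda_j|^p$, using $p$-subadditivity of $\|\cdot\|_{H^p}^p$.

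For the reverse inclusion $H^p(\Omega)\subset H^{p,q}_{\rm at}(\Omega)$, I would use the equalities $H^p(\Omega)=h^p(\Omega)=H^{p,2}_{\rm at}(\Omega)$ valid for regular martingales, and then note that because the filtration is regular with every $\F_k$ generated by countably many atoms, the standard atomic decomposition of \cite{WF1} actually produces atoms satisfying the stronger $(p,\infty)$ normalization; an $L^\infty$-normalized atom is, after rescaling, a $(p,q)$-atom for every $q\in[1,\infty]$. Thus the decomposition realizing $f\in h^p(\Omega)$ can be taken with $(p,q)$-atoms for any prescribed $q$, giving $\|f\|_{H^{p,q}_{\rm at}(\Omega)}\lesssim \|f\|_{h^p(\Omega)}\approx\|f\|_{H^p(\Omega)}$. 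The main obstacle I anticipate is bookkeeping the uniformity of constants across the full range $1\leqslant q\leqslant\infty$ — in particular ensuring the endpoint $q=1$ is handled, since the cited coincidence $h^p=H^{p,q}_{\rm at}$ in \cite{WF1} is typically stated for $1<q\leqslant\infty$. To close the $q=1$ endpoint I would argue that a $(p,1)$-atom is weaker than a $(p,q)$-atom for $q>1$, so $H^{p,q}_{\rm at}(\Omega)\subset H^{p,1}_{\rm at}(\Omega)$ trivially, while the forward bound $H^{p,1}_{\rm at}(\Omega)\subset H^p(\Omega)$ follows from the per-atom estimate combined with Lemma \ref{lem4.2} and regularity; together these squeeze $H^{p,1}_{\rm at}(\Omega)$ between spaces all equal to $H^p(\Omega)$, completing the proof.
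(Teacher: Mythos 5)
Your reduction is the same as the paper's: the chain $H^{p,\infty}_{\rm at}(\Omega)\subset H^{p,q}_{\rm at}(\Omega)\subset H^{p,1}_{\rm at}(\Omega)$, the cited identities $H^p(\Omega)=h^p(\Omega)=H^{p,q}_{\rm at}(\Omega)$ for $1<q\leq\infty$ under regularity, and hence everything rests on the single endpoint inclusion $H^{p,1}_{\rm at}(\Omega)\subset H^p(\Omega)$. Exactly there your proposal has a genuine gap, and you in fact exhibit it yourself: you correctly compute that the global form of Lemma \ref{lem4.2} applied to a simple $(p,1)$-atom $a$ with $\supp(a)\subset A\in\F_n$ yields only $\|a\|_{H^p(\Omega)}\lesssim_p\|a\|_1\leq P(A)^{1-1/p}$, a quantity that blows up as $P(A)\to0$ since $1-1/p<0$; but your closing sentence then asserts that the inclusion ``follows from the per-atom estimate combined with Lemma \ref{lem4.2} and regularity'', i.e.\ from the very estimate you had just flagged as unbounded. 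The uniformity is not bookkeeping: to pass from atoms to the space one needs $\|f\|^p_{H^p(\Omega)}\leq\sum_j|\lambda_j|^p\,\|a^j\|^p_{H^p(\Omega)}\lesssim_p\sum_j|\lambda_j|^p$, and your estimate gives only the bound $\sum_j|\lambda_j|^p\,P(A_{n_j})^{p-1}$, which is not controlled by $\sum_j|\lambda_j|^p$. So the one step that constitutes new content beyond the citations is not proved.

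The missing idea is that a $(p,1)$-atom must be estimated using its support and cancellation, not merely its $L^1$ norm: apply Lemma \ref{lem4.2} \emph{after conditioning on $A$}, which is precisely the localization used in the paper's proof of Theorem \ref{thm4.5}. Since $\ce_n(a)=0$ and $\supp(a)\subset A$, the maximal function $a^*$ vanishes off $A$, and on $A$ it coincides with the maximal function of $a$ relative to the filtration $\F^A_k:=\{B\in\F_{k+n}:B\subseteq A\}$ of the probability space $(A,\F^A,P_A)$ with $P_A:=P(\cdot)/P(A)$. Lemma \ref{lem4.2} on that space, together with $\|a\|_{L^1(P_A)}=\|a\|_1/P(A)\leq P(A)^{-1/p}$, gives
\begin{equation*}
\|a^*\|^p_{L^p(\Omega)}=P(A)\int_A(a^*)^p\,dP_A\;\leq\;\frac{P(A)}{1-p}\,\|a\|^p_{L^1(P_A)}\;\leq\;\frac{1}{1-p},
\end{equation*}
a bound uniform over all simple $(p,1)$-atoms; equivalently, rerun the layer-cake argument in the proof of Lemma \ref{lem4.2} using both $P(a^*>\lambda)\leq\lambda^{-1}\|a\|_1$ and $P(a^*>\lambda)\leq P(A)$, splitting the integral at $\lambda=P(A)^{-1/p}$. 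Since regularity gives $H^p_m(\Omega)=H^p(\Omega)$, this is the uniform estimate $\|a\|_{H^p(\Omega)}\lesssim_p1$ that makes the summation, and hence your squeeze, actually close; it is also the reading one should give to the paper's one-line appeal to Lemma \ref{lem4.2}. The remainder of your proposal (the $q\geq2$ square-function computation, the observation that $(p,\infty)$-atoms are $(p,q)$-atoms for every $q$) is correct but already subsumed by the sandwich.
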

\begin{proof}
By considering the aforementioned atomic decomposition of $H^p(\Omega) $ and Definition \ref{ath}, we have $H^p(\Omega)=H^{p, \8}_{\rm at}(\Omega)$. It is easy to see $H^{p, \8}_{\rm at}(\Omega)\subset H^{p, q}_{\rm at}(\Omega)\subset H^{p, 1}_{\rm at}(\Omega)$. It thus suffices to show that $H^{p, 1}_{\rm at}(\Omega)\subset H^p(\Omega)$. By Lemma \ref{lem4.2}, if $a$ is a simple $(p,1)$-atom, then
$$ \|a\|_{H^p(\Omega)}\lesssim_p \|a\|_1 , $$
which implies that $a\in H^p(\Omega)$. Hence, $H^{p,1}_{\rm at}(\Omega)\subset H^{p}(\Omega)$ and so, $H^p(\Omega)=H^{p, q}_{\rm at}(\Omega)$.
\end{proof}

\subsection{Characterization of martingale Lipschitz spaces}
In this subsection, we give a characterization of martingale Lipschitz spaces that appears to be new and useful in our argument.
We shall first recall the definition of martingale Lipschitz spaces. For $0 < p < 1$ define
\begin{equation}\label{Lip}
  \Lambda_q(\alpha_p):=\lk{ f\in L^2(\Omega): \|f\|_{\Lambda_q(\alpha_p)}=\sup_{n\geq 0}\sup_{A\in\F_n} P(A)^{-\frac1q-\alpha_p} \sk{ \int_A |f-f_n|^q dP }^{\frac1q} <\infty },
\end{equation}	
where $q=1$ or $q=2$, $\alpha_p:= \frac{1}{p}-1 >0$.

In \cite{WF1}, Weisz showed that $ (H^p(\Omega))^* = \Lambda_1(\alpha_p) $ and $ \Lambda_1(\alpha_p)=\Lambda_2(\alpha_p) $.

\begin{corollary}\label{lam1}
For any $ g\in \Lambda_1(\alpha_p)$, we have $ \| g-g_0 \|_{\infty} \lesssim_p \|  g \|_{\Lambda_1(\alpha_p)} $.
\end{corollary}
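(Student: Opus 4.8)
The plan is to reduce the $L^\infty$-estimate to a pointwise bound holding almost everywhere and to exploit the fact that, for a regular filtration in which each $\F_n$ is generated by countably many atoms, the atoms containing a fixed point shrink geometrically. For $\omega\in\Omega$ and $n\ge 0$ let $A_n(\omega)$ be the atom of $\F_n$ containing $\omega$; these sets are nested, and $g_n(\omega)=\langle g\rangle_{A_n(\omega)}:=P(A_n(\omega))^{-1}\int_{A_n(\omega)}g\,dP$. Since $\F=\sigma(\cup_k\F_k)$ and $g\in L^2\subset L^1$, the martingale convergence theorem gives $g_n(\omega)\to g(\omega)$ for a.e.\ $\omega$, so it suffices to bound $\sup_n|g_n(\omega)-g_0(\omega)|$ by $C_p\|g\|_{\Lambda_1(\alpha_p)}$ uniformly in $\omega$.

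First I would telescope along the \emph{distinct} atoms in this chain. Writing $B_0\supsetneq B_1\supsetneq\cdots$ for the strictly decreasing subsequence of $\{A_n(\omega)\}$, realized at the successive splitting times $m_0=0<m_1<\cdots$ (so $B_i=A_{m_i}(\omega)$), we have $g_n(\omega)-g_0(\omega)=\sum_i(\langle g\rangle_{B_i}-\langle g\rangle_{B_{i-1}})$. Because $\langle g\rangle_{B_{i-1}}$ is exactly the constant value of $g_{m_{i-1}}$ on $B_{i-1}\in\F_{m_{i-1}}$, applying the defining inequality \eqref{Lip} of $\Lambda_1(\alpha_p)$ at level $n=m_{i-1}$ with $A=B_{i-1}$ gives
\[
|\langle g\rangle_{B_i}-\langle g\rangle_{B_{i-1}}|\le\frac{1}{P(B_i)}\int_{B_{i-1}}|g-g_{m_{i-1}}|\,dP\le\frac{\|g\|_{\Lambda_1(\alpha_p)}\,P(B_{i-1})^{1+\alpha_p}}{P(B_i)}.
\]
Regularity (Definition \ref{reg_filt}) applied to the child atom $B_i$ yields $P(B_{i-1})\le C\,P(B_i)$, so each term is bounded by $C\|g\|_{\Lambda_1(\alpha_p)}P(B_{i-1})^{\alpha_p}$.

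The step I expect to be the crux is the convergence of $\sum_i P(B_{i-1})^{\alpha_p}$, since the bound above is useless without geometric decay of the $P(B_i)$. I would obtain this decay from regularity once more, now used as a \emph{lower} bound on children: exactly as in the previous paragraph, every atom $A$ of $\F_{m_i}$ contained in $B_{i-1}$ satisfies $P(A)\ge C^{-1}P(B_{i-1})$. Since $B_{i-1}$ genuinely splits at step $m_i$, it has, besides $B_i$, at least one further child of measure $\ge C^{-1}P(B_{i-1})$; as the children partition $B_{i-1}$ this forces $P(B_i)\le(1-C^{-1})P(B_{i-1})$. With $\theta:=1-C^{-1}\in(0,1)$ we get $P(B_{i-1})\le\theta^{\,i-1}$, whence
\[
\sup_n|g_n(\omega)-g_0(\omega)|\le C\|g\|_{\Lambda_1(\alpha_p)}\sum_{i\ge 1}\theta^{(i-1)\alpha_p}=\frac{C}{1-\theta^{\alpha_p}}\,\|g\|_{\Lambda_1(\alpha_p)}=:C_p\|g\|_{\Lambda_1(\alpha_p)}.
\]
Letting $n\to\infty$ and taking the essential supremum over $\omega$ gives the claim. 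The remaining points are routine: a.e.\ convergence is covered by the martingale convergence theorem (the chain simply terminating when it admits only finitely many strict decreases), and all atoms entering the argument have positive measure, so the averages and the use of \eqref{Lip} are legitimate.
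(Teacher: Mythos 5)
Your argument is correct, but it follows a genuinely different route from the paper's. The paper proves the corollary by duality in two lines: for $f\in L^2(\Omega)$, Weisz's duality theorem $(H^p(\Omega))^*=\Lambda_1(\alpha_p)$ gives $|\ce\big(f(g-g_0)\big)|=|\ce\big(g(f-f_0)\big)|\lesssim_p \|f\|_{H^p}\|g\|_{\Lambda_1(\alpha_p)}$, Lemma \ref{lem4.2} together with the identity $H^p(\Omega)=H^p_m(\Omega)$ for regular martingales gives $\|f\|_{H^p}\lesssim_p\|f\|_1$, and then $\big(L^1(\Omega)\big)^*=L^\infty(\Omega)$ yields the claim. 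You instead argue pointwise: martingale convergence reduces the statement to a uniform a.e.\ bound on $\sup_n|g_n-g_0|$, obtained by telescoping over the distinct atoms containing a point, controlling each increment by the defining inequality \eqref{Lip} plus regularity, and summing a geometric series whose ratio comes from a second use of regularity (every child of a splitting atom has mass at least $C^{-1}$ times the parent's, so the surviving atom loses a fixed proportion of mass at each genuine split). Both of your key steps check out: the comparison $P(B_{i-1})\le C\,P(B_i)$ is a legitimate one-level application of Definition \ref{reg_filt} because the atom chain is constant strictly between splitting times, so $B_{i-1}$ is itself an atom of $\F_{m_i-1}$; and the decay $P(B_{i-1})\le\theta^{i-1}$ with $\theta=1-C^{-1}$ makes the series converge precisely because $\alpha_p>0$. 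The paper's route buys brevity at the price of invoking the nontrivial duality theorem; yours is self-contained, shows exactly where atomicity, regularity and $p<1$ enter, and produces an explicit constant $C/(1-\theta^{\alpha_p})$ that blows up as $p\to1^-$, consistent with the fact that the statement fails at $p=1$ (functions of bounded mean oscillation need not be bounded). One caveat, shared with the paper rather than specific to your proof: if the first splitting time is $m_1=1$, you need regularity between $\F_0$ and $\F_1$, which Definition \ref{reg_filt} literally requires only for $k\ge2$; this extra hypothesis is genuinely needed, since otherwise the corollary is false (on $[0,1]$ take $\F_k\equiv\sigma\big(\{[0,\epsilon],(\epsilon,1]\}\big)$ for all $k\ge1$ and $g=1_{[0,\epsilon]}$, for which $\|g-g_0\|_\infty\approx 1$ while $\|g\|_{\Lambda_1(\alpha_p)}\approx\epsilon$), and the paper's own proof uses it implicitly through the duality theorem and the equivalence $H^p(\Omega)=H^p_m(\Omega)$.
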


\begin{proof}
By duality and Lemma \ref{lem4.2}, for any $ f \in L^2(\Omega)$,
$$ | \ce\big(f(g-g_0)\big)|=\big| \ce\sk{g(f-f_0)} \big|\lesssim_p \| f \|_{H^p} \| g \|_{\Lambda_1(\alpha_p)} \lesssim_p \| f \|_{1} \| g \|_{\Lambda_1(\alpha_p)}. $$
The above estimate together with the fact $\big(L^1\big(\Omega))^*=L^\8(\Omega)$ yields
 $$ \| g-g_0 \|_{\infty} \lesssim_p \| g \|_{\Lambda_1(\alpha_p)}, $$
  which finishes the proof.
\end{proof}
 By virtue of Corollary \ref{lam1}, we have the following  property of martingale Lipschitz spaces.
\begin{thm}\label{thm4.5}
If $ g\in \Lambda_1(\alpha_p) $, we have $ \|1_A \cdot | g-g_n |\|_\8 \lesssim_p P(A)^{\alpha_p} \| g \|_{\Lambda_1(\alpha_p)} $,  for any $ n\in \nat$ and any $ A \in \F_n$.
\end{thm}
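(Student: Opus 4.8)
The plan is to upgrade the averaged $L^1$-type control provided by the definition of $\Lambda_1(\alpha_p)$ into an $L^\infty$ estimate for $1_A|g-g_n|$, and the key is to reduce the statement for arbitrary $n$ and $A\in\F_n$ to the already-established case $n=0$ recorded in Corollary \ref{lam1}. The natural device is localization of the martingale to the atom $A$. First I would fix $n\in\nat$ and an atom $A\in\F_n$ (since each $\F_k$ is generated by countably many atoms, it suffices to treat atoms and then pass to general $A\in\F_n$). On the set $A$ the martingale $\{g_k\}_{k\geq n}$ behaves like a martingale started at level $n$; concretely, I would consider the conditional object $g-g_n$ restricted to $A$ and realize it, up to normalization, as an element whose $\Lambda_1(\alpha_p)$-seminorm relative to the localized filtration on $A$ is controlled by $\|g\|_{\Lambda_1(\alpha_p)}$.

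The main step is to set up the localized probability space $(A,\F|_A,P(\cdot\mid A))$ with the restricted filtration $\{\F_k\cap A\}_{k\geq n}$ and to define $\tilde g:=(g-g_n)1_A$, viewed through the renormalized measure $P_A:=P(\cdot)/P(A)$ on $A$. The defining supremum for $\|g\|_{\Lambda_1(\alpha_p)}$ in \eqref{Lip}, taken over all $B\in\F_m$ with $B\subset A$ and $m\geq n$, gives exactly the $\Lambda_1(\alpha_p)$-bound for $\tilde g$ on the localized space: for such $B$,
\begin{equation*}
P(B)^{-1-\alpha_p}\int_B |\tilde g - \tilde g_m|\,dP \leq \|g\|_{\Lambda_1(\alpha_p)},
\end{equation*}
since $\tilde g_m = (g_m-g_n)1_A$ for $m\geq n$ because $A\in\F_n\subset\F_m$. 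Rewriting this in terms of $P_A$ introduces the scaling factors $P(A)^{-1-\alpha_p}$, and a short bookkeeping check shows $\|\tilde g\|_{\Lambda_1(\alpha_p;P_A)}\lesssim_p P(A)^{\alpha_p}\|g\|_{\Lambda_1(\alpha_p)}$, where the $P(A)^{\alpha_p}$ is precisely the weight that survives the change of the total mass from $1$ to $P(A)$ in the normalizing power $P(\cdot)^{-1-\alpha_p}$.

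With the localized function in hand, I would apply Corollary \ref{lam1} on the localized probability space $(A,P_A)$: its zeroth level (relative to the filtration starting at index $n$) is $\tilde g_n=0$, so Corollary \ref{lam1} yields
\begin{equation*}
\|\tilde g\|_{\infty} \lesssim_p \|\tilde g\|_{\Lambda_1(\alpha_p;P_A)} \lesssim_p P(A)^{\alpha_p}\|g\|_{\Lambda_1(\alpha_p)}.
\end{equation*}
Since $\tilde g = (g-g_n)1_A$ and the $L^\infty$-norm is unaffected by passing from $P$ to the equivalent measure $P_A$ on $A$, this is exactly the claimed bound $\|1_A\cdot|g-g_n|\|_\infty\lesssim_p P(A)^{\alpha_p}\|g\|_{\Lambda_1(\alpha_p)}$. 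Finally, a general $A\in\F_n$ is a countable disjoint union of atoms of $\F_n$, and $\|1_A|g-g_n|\|_\infty$ is the supremum over these atoms of the corresponding local quantities; since each atomic bound carries the same constant and each atom $A'$ satisfies $P(A')\leq P(A)$, the estimate passes to $A$.

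The hard part will be verifying the scaling identity for the seminorms cleanly, i.e.\ confirming that restricting to $A$ and renormalizing the measure produces exactly the factor $P(A)^{\alpha_p}$ and that the $n=0$ reduction is legitimate once the filtration is re-indexed to start at $n$; the subtlety is that Corollary \ref{lam1} was proved for the original probability space, so one must make sure its proof (which rests only on Lemma \ref{lem4.2} and duality, both of which are insensitive to this kind of localization and renormalization) genuinely applies to the localized space. Everything else is routine bookkeeping with the definition \eqref{Lip}.
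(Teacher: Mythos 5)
Your proposal is correct and follows essentially the same route as the paper: the paper also localizes to $(A,\F^A,P_A)$ with $P_A:=P(\cdot)/P(A)$ and the re-indexed filtration $\F^A_k=\{B\in\F_{k+n}:B\subseteq A\}$, verifies that the localized seminorm picks up exactly the factor $P(A)^{\alpha_p}$, and then applies Corollary \ref{lam1} on the localized probability space (noting, as you do, that the localized martingale inherits regularity and countable atomicity). The only cosmetic difference is your initial reduction to atoms of $\F_n$, which is unnecessary since the localization argument works verbatim for any $A\in\F_n$ with $P(A)>0$.
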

\begin{proof}
  Note that when $P(A)=0$, the desired result holds trivially. Fix $n\in \nat$ and $ A \in \F_n$ with $ P(A)\neq 0 $. For $k\geq 0$, let  $ \F_k^A := \lk{ B \in \F_{k+n}: B \subseteq A }$. Note that the union $\F^A$ of all $ \F_k^A$ is exactly  $\{B\in \F | B\subset A\}$. Hence, if we define 
$$ P_A(B) := \dfrac{P(B)}{P(A)} \quad (B\in \F^A)  $$
then $(A, \F^A, P_A)$ is a probability space. Note that for any $g\in L^1(A, \F^A, P_A)$ one has
$$ \mathbb{E}(g| \F^A_k)=1_A \cdot \ce(g| \F_{k+n}) . $$
Denote $ \mathbb{E}( \cdot | \F^A_k)$ by $\ce_k^A$. It is easy to verify $\{ \ce_k^A(g) \}_{k\geq0}$ is also a regular martingale on $(A, \F^A, P_A)$.  If $g\in  \Lambda_1(\alpha_p)$, then for $B\in \F_k^A$ with $P(B)\neq 0$,
\begin{equation*}
 \begin{aligned}
 P_A(B)^{-1-\alpha_p}\sk{\int_B |g-\ce_k^A (g)| dP_A}&=P(A)^{\alpha_p} \sk{P(B)^{-1-\alpha_p}\sk{\int_B |g-g_{k+n}| dP}}\\
&\leq P(A)^{\alpha_p} \|g\|_{\Lambda_1(\alpha_p)}
\end{aligned}
\end{equation*}
which implies that by Corollary \ref{lam1},
\begin{equation*}
 \| 1_A \cdot |g-g_n|\|_\8=\| 1_A \cdot |g-\ce_0^A (g)|\|_\8 \lesssim_p P(A)^{\alpha_p}\| g \|_{\Lambda_1(\alpha_p)}.
\end{equation*}
This completes the proof of the theorem.
\end{proof}

\begin{remark}\label{rmk_Lip}
By Theorem \ref{thm4.5} and \eqref{Lip}, we conclude that for $g\in \Lambda_1(\alpha_p)$ we have the characterization
\begin{equation}\label{rmk_Lip_eq}
 \|g\|_{\Lambda_1(\alpha_p)}\approx_p \sup_{n\geq 0}\sup_{A\in\F_n} P(A)^{-\alpha_p}  \| 1_A \cdot |g-g_n|\|_\8. 
\end{equation}
Note that the results in \cite{MN} can be deduced from \eqref{rmk_Lip_eq}.
\end{remark}

\subsection{Proof of Theorem \ref{Theorem C}}
As in the proof of Theorem \ref{Theorem A}, we divide the proof into three parts. Without loss of generality, we may assume that $f_0=g_0=0$.

\subsubsection{Estimates for $ \Pi_1(f,g) $ and $ \Pi_3(f,g) $} The boundedness of $\Pi_1$ from $H^p(\Omega)\times \Lambda_1(\alpha_p)$ to $L^1(\Omega)$ follows directly from the duality between $H^p(\Omega)$ and $\Lambda_1(\alpha_p)$,   
 we omit the details.

We shall also prove that $ \Pi_3 $ is a bounded bilinear operator from $ H^p(\Omega) \times \Lambda_1(\alpha_p) $ to $ H^p(\Omega) $. Note that 
\begin{equation}\label{pi-3-3}
  S(\Pi_3(f,g))^2 = \sum_{k=1}^{\infty} |g_{k-1}|^2 |d_kf|^2 \leqslant (g^*) ^2 S(f)^2 .
\end{equation}
Hence we conclude from Corollary \ref{lam1} and the $L^\infty$ boundedness of the maximal function that 
\begin{equation}\label{pi-3-2}
  \| \Pi_3(f,g) \|^p_{H^p(\Omega)} \lesssim \|g^*\|_\infty^p  \ce (S(f)^p) \leq \|g\|_\infty^p  \| f \|_{H^p(\Omega)}^p   \lesssim_p \| f \|_{H^p(\Omega)}^p \| g \|_{\Lambda_1(\alpha_p)}^p,
\end{equation}
as desired. 

\subsubsection{Estimates for $ \Pi_2(f,g) $.}  
We will show that $ \Pi_2 $ is a bounded bilinear operator from $ H^p(\Omega) \times \Lambda_1(\alpha_p) $ to $ H^1(\Omega) $.
Note that $H^p(\Omega)=h^p(\Omega)$, and $h^p(\Omega)$ admits an atomic decomposition.

Then there exist $ \lk{\lambda_j}_{j\geqslant 1} \subset \real$ and simple $(p,\8)$-atoms $ \lk{a^j}_{j \geqslant 1} $ such that
\begin{equation}\label{atom dec}
f = \sum_{j=1}^{\infty} \lambda_j a^j,\quad \| f \|_{H^p(\Omega)} \approx_p \sk{\sum_{j=1}^{\infty} |\lambda_j|^p}^\frac{1}{p},
\end{equation}
where we assume $\supp (a^j) \subset A_{n_j} $ and $ A_{n_j} \in \F_{n_j} $ with $P(A_{n_j})>0$ for $j\geq 1$.
By arguing as in the corresponding case in the proof of Theorem \ref{Theorem A},
\begin{equation}
  S(\Pi_2(a^j,g)) = \sk{\sum_{k=n_j+1}^\8 1_{A_{n_j}}|a^j_{k-1}|^2|d_kg|^2}^{\frac{1}{2}} \leqslant |(a^j)^*|\sk{\sum_{k=n_j+1}^\8 1_{A_{n_j}}|d_kg|^2}^{\frac{1}{2}}.
\end{equation}
Hence,
\begin{align*}
\ce\mk{S(\Pi_2(a^j,g))} & \leqslant \| (a^j)^* \|_{\infty}\mk{ \ce\sk{1_{A_{n_j}} \sum_{k=n_j+1}^\8|d_kg|^2 }^{\frac{1}{2}} } \\
& \leqslant \| a^j \|_{\infty}\sk{P(A_{n_j})\sk{ \ce \sum_{k=n_j+1}^\8|d_kg|^2 }}^{\frac{1}{2}} \\
& \leqslant P(A_{n_j})^{-\frac{1}{p}}\sk{P(A_{n_j})\| g \|_{\Lambda_2(\alpha_p)}^2 P(A_{n_j})^{1+2\alpha_p} }^{\frac{1}{2}} \\
& = \| g \|_{\Lambda_2(\alpha_p)} P(A_{n_j})^{-\frac{1}{p}}P(A_{n_j})^{1+\alpha_p} \\
& \leqslant \| g \|_{\Lambda_2(\alpha_p)} \lesssim_p \| g \|_{\Lambda_1(\alpha_p)},
\end{align*}
where the last inequality follows from the condition that $\alpha_p = \frac{1}{p}-1$. As a consequence of the above estimates, we have that
\begin{equation}\label{pip-2}
\| \Pi_2(f,g) \|_{H^1(\Omega)}^p \leqslant \sum_{j=1}^\8|\lambda_j|^p\mk{\ce S(\Pi_2(a^j,g))}^p \lesssim_p \| f \|_{H^p(\Omega)}^p\| g\|_{\Lambda_1(\alpha_p)}^p.
\end{equation}
This completes the proof of the theorem.

\section{Homogeneous spaces}\label{case3}
In this section, we introduce some fundamental concepts and important theorems for homogeneous spaces, which can be found in \cite{Co}. We begin with the definition of homogeneous spaces. Recall that $ d $ is a quasi-metric on $\Omega$ if
\begin{enumerate}
\item $d(x,x)\geq 0$, \ $\forall x \in \Omega$;
\item $d(x,y)=d(y,x)$, \ $\forall x,y \in \Omega$;
\item there exists a constant $ A_0 \geq 1$ such that
\begin{equation}\label{metric}
d(x,y) \leqslant A_0(d(x,z)+ d(z,y)) , \quad \forall x,y,z \in \Omega.
\end{equation}
\end{enumerate}

Denote by $B(x,r):= \{y \in \Omega : d(y, x) < r\}$ the open ball centered at $x$ with radius $r$. In this paper, all quasi-metric spaces are assumed to have the doubling property: there exists a positive integer $A_1 \in \nat$ such that for every $ x \in \Omega $ and for every
$r > 0$, the ball $B(x, r) $ can be covered by at most $A_1$ balls $B(x_i
, \frac{r}{2})$ for some $x_i\in \Omega$.

\begin{definition}
A $ \sigma$-finite measure space $ ( \Omega, \F, \mu ) $ equipped with a quasi-metric $d$ is called a homogeneous space if $ \mu $ is a Borel measure of homogeneous type:
\begin{equation}\label{double}
0<\mu \sk{B(x,2r)} \leqslant 2^{C_\mu} \mu \sk{B(x,r)}<\8 ,\quad \forall x \in \Omega, r>0, 
\end{equation}
where the constant $ C_{\mu} $ is independent of $ x $ and $ r $.
\end{definition}

In \cite{Co}, Coifman and Weiss defined Hardy spaces on homogeneous spaces by regarding their elements as linear functionals acting on some appropriate quasi-normed spaces. In order to state the definition of Coifman and Weiss, we need to introduce the notions of atoms, $BMO$ and Lipschitz spaces on homogeneous spaces.

\begin{definition} 
If $ 0 <  p \leqslant 1 \leqslant q \leq \infty $ and $p<q$, we say that a function $ a $ is a $(p, q)$-atom if
\begin{enumerate}
\item $ \supp (a) \subset B $ where $ B $ is a ball;
\item $\|a\|_q \leqslant \sk{ \mu(B) }^{\frac{1}{q}-\frac1p}$;
\item $ \int_\Omega a d\mu =\int_B a d\mu = 0. $
\end{enumerate}
\end{definition}

\begin{definition}
A locally integrable function $f$ is called a $BMO$ function if
$$ \| f \|_{BMO} := \sup_{B} \dfrac{1}{\mu(B)} \int_{B} |f-f_B|d\mu < \infty, $$
where $f_B:=\dfrac{1}{\mu(B)} \int_{B} fd\mu$, and the supremum runs over all balls $B$. Denote by $ BMO(\mu) $ the  $BMO$ space consisting of all $BMO$ functions.
\end{definition}

\begin{definition} 
For $ \alpha > 0 $, a locally integrable function $ l  $ is called a Lipschitz function if
\begin{equation}\label{lip}
| l(x) - l(y) | \leqslant C_{\alpha}\sk{ \mu(B) }^{\alpha} \text{ for any $ x,y \in \Omega $ and any ball $  B $ containing $ x,y $.  }
\end{equation}
Moreover,
\begin{equation}
\| l \|_{\L_\alpha} := \inf\{ C_\alpha:| l(x) - l(y) | \leqslant C_{\alpha}\sk{ \mu(B) }^{\alpha}, \ \forall x,y \in B \},
\end{equation}
where the infimum runs over all balls $ B $. Denote by $ \L_\alpha(\mu) $ the space consisting of all Lipschitz functions.
\end{definition}

It is well-known that each $ BMO $ function can be regarded as a continuous linear functional on the vector space generated by finite linear combinations of $(1, q)$-atoms for $1<q\leq \8$ (cf. \cite{Co}). Hence we can define the atomic Hardy space $ H^{1, q}_{\rm at}(\mu) $ $(1<q\leq \8)$ as follows:
\begin{multline}\label{De1}
H^{1, q}_{\rm at}(\mu) := \\
 \left\{ f\in \sk{BMO(\mu)}^*  : \  
  f = \sum_{j=0}^{\infty} \lambda_j a^j, \text{ where $a^j$ is a $(1,q)$-atom and } \sum_{j=0}^\8|\lambda_j| < \infty \right\}
\end{multline}
endowed with the norm
$$ \|f\|_{H^{1, q}_{\rm at}(\mu)}:= \inf \lk{\sum_{j=0}^\8|\lambda_j|: f = \sum_{j=0}^{\infty} \lambda_j a^j, \text{ where $a^j$ is a $(1,q)$-atom} } . $$

Similarly, each Lipschitz function $ l \in \L_{\alpha_p}(\mu) $ can be also regarded as a continuous linear functional of the vector space generated by finite linear combinations of $ (p,q) $-atoms where $ 0<p<1\leq q\leq \8$ and $ \alpha_p = \frac{1}{p} - 1 $ (cf. \cite{Co}). We define the atomic Hardy spaces $ H^{p, q}_{\rm at}(\mu) $ as follows:
\begin{multline}\label{De2}
H^{p, q}_{\rm at}(\mu) \\ := \left\{ f\in \sk{\L_{\alpha_p}(\mu)}^* : \  f = \sum_{j=0}^{\infty} \lambda_j a^j, \text{ where $a^j$ is a $(p,q)$-atom and } \sum_{j=0}^\8|\lambda_j|^p < \infty \right\}
\end{multline}
endowed with the quasi-norm
$$ \|f\|_{H^{p, q}_{\rm at}(\mu)}:= \inf \lk{\sk{\sum_{j=0}^\8|\lambda_j|^p}^\frac{1}{p}: f = \sum_{j=0}^{\infty} \lambda_j a^j, \text{ where $a^j$ is a $(p,q)$-atom} } . $$

Although the Hardy spaces vary with $p$ and $q$ according to the above definitions, the following theorem, which can be found in \cite{Co}, shows that the Hardy spaces actually depend only on $p$. Consequently, this enables us to define the Hardy spaces $ H^p_{\rm at}(\mu) $ for $ 0<p\leqslant 1 $ to be any one of the  spaces $ H^{p, q}_{\rm at}(\mu) $ for $0< p<q \leqslant \infty, 1\leqslant q \leq \8 $.

\begin{thm}\label{thm2.15} $ H^{p, q}_{\rm at}(\mu) = H_{\rm at}^{p,\infty}(\mu) $ whenever $ 0 < p \leq 1 \leqslant q \leqslant \infty $ and $p<q$.
\end{thm}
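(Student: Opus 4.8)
The plan is to prove the two inclusions separately; the only substantial content is a Calder\'on--Zygmund decomposition of a single atom. The inclusion $H^{p,\infty}_{\rm at}(\mu)\subseteq H^{p,q}_{\rm at}(\mu)$ is immediate: every $(p,\infty)$-atom $a$ on a ball $B$ is automatically a $(p,q)$-atom, since H\"older's inequality gives $\|a\|_q\leqslant \mu(B)^{1/q}\|a\|_\infty\leqslant \mu(B)^{1/q-1/p}$, while the support and cancellation conditions are identical; hence $\|f\|_{H^{p,q}_{\rm at}(\mu)}\leqslant\|f\|_{H^{p,\infty}_{\rm at}(\mu)}$. For the reverse inclusion it suffices, by the definition of the atomic quasi-norm and the subadditivity of $t\mapsto t^p$, to produce for each $(p,q)$-atom $a$ (with $1\leqslant q<\infty$; the case $q=\infty$ is trivial) a decomposition $a=\sum_j \mu_j b^j$ into $(p,\infty)$-atoms $b^j$ with $\sum_j|\mu_j|^p\leqslant C$, where $C$ is independent of $a$. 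Indeed, substituting this into an arbitrary atomic representation $f=\sum_i\lambda_i a^i$ produces a $(p,\infty)$-atomic representation with $\ell^p$-coefficient bound $\lesssim\sum_i|\lambda_i|^p$.

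To build the decomposition, fix a $(p,q)$-atom $a$ supported in $B_0$ with $\int_\Omega a\,d\mu=0$ and $\|a\|_q\leqslant\mu(B_0)^{1/q-1/p}$; H\"older also yields $\|a\|_1\leqslant\mu(B_0)^{1-1/p}$. Let $M$ be the uncentred Hardy--Littlewood maximal operator on $(\Omega,d,\mu)$, which satisfies the weak-type $(1,1)$ and (for $q>1$) strong-type $(q,q)$ bounds thanks to the doubling property \eqref{double}. Choose $\lambda_0:=C_*\mu(B_0)^{-1/p}$ with $C_*$ large; the bound on $\|a\|_1$ forces the open set $\{Ma>\lambda_0\}$ to lie in a fixed dilate $\kappa B_0$. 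For $k\geqslant 0$ set $\lambda_k:=2^k\lambda_0$ and $\mathcal O_k:=\{Ma>\lambda_k\}$, and perform a Whitney/Calder\'on--Zygmund decomposition of each $\mathcal O_k$ into balls $\{Q_{k,i}\}_i$ of controlled eccentricity and bounded overlap, with a subordinate partition of unity $\{\varphi_{k,i}\}$. Writing $a=g_k+b_k$ with $b_k=\sum_i(a-c_{k,i})\varphi_{k,i}$, where $c_{k,i}$ is chosen so that $\int(a-c_{k,i})\varphi_{k,i}\,d\mu=0$, the standard estimates give $\|g_k\|_\infty\lesssim\lambda_k$ and $\sum_i\mu(Q_{k,i})\lesssim\mu(\mathcal O_k)\lesssim\lambda_k^{-q}\|a\|_q^q$ (via the maximal theorem).

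Since $Ma\in L^q$ we have $\mu(\mathcal O_k)\to 0$, hence $g_k\to a$ in $L^q$, and one telescopes $a=g_0+\sum_{k\geqslant0}(g_{k+1}-g_k)$. The term $g_0$ is supported in $\kappa B_0$, has mean zero, and satisfies $\|g_0\|_\infty\lesssim\lambda_0\approx\mu(\kappa B_0)^{-1/p}$, so $g_0$ is a bounded multiple of a $(p,\infty)$-atom. Each difference $g_{k+1}-g_k=b_k-b_{k+1}$ is supported in $\mathcal O_k$, is bounded by $\lesssim\lambda_k$, and decomposes along the Whitney balls $Q_{k,i}$ into mean-zero pieces $h_{k,i}$ with $\supp h_{k,i}\subset Q_{k,i}$ and $\|h_{k,i}\|_\infty\lesssim\lambda_k$. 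Normalising, $b^{k,i}:=h_{k,i}/\mu_{k,i}$ with $\mu_{k,i}:=C\lambda_k\,\mu(Q_{k,i})^{1/p}$ is a $(p,\infty)$-atom, and
\[
\sum_{k,i}|\mu_{k,i}|^p \;\lesssim\; \sum_{k}\lambda_k^{p}\sum_i \mu(Q_{k,i}) \;\lesssim\; \sum_{k}\lambda_k^{p-q}\,\|a\|_q^q \;=\; \|a\|_q^q\,\lambda_0^{p-q}\sum_{k\geqslant 0}2^{k(p-q)}.
\]
Because $p<q$ the geometric series converges, and since $\lambda_0^{p-q}=\mu(B_0)^{(q-p)/p}$ while $\|a\|_q^q\leqslant\mu(B_0)^{1-q/p}$, the right-hand side is $\lesssim 1$; this is precisely where the hypothesis $p<q$ enters.

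The only genuinely technical point is the construction of the Whitney/Calder\'on--Zygmund decomposition on the quasi-metric measure space $(\Omega,d,\mu)$ --- a Whitney covering of each $\mathcal O_k$ with bounded overlap, a subordinate partition of unity, the verification that $g_k$ is bounded by the stopping height, and the splitting of $g_{k+1}-g_k$ into localised mean-zero pieces --- all of which depend only on the doubling property \eqref{double} and are classical. I expect this geometric bookkeeping, rather than the estimates themselves, to be the main obstacle. Once it is in place, the normalisations and the geometric summation above finish the proof; a closing remark should note that the series $\sum_j\mu_j b^j$ converges to $a$ not only in $L^q$ but also in the duality pairing against $\L_{\alpha_p}(\mu)$ (respectively $BMO(\mu)$ when $p=1$), so that the identity indeed holds in $H^{p,\infty}_{\rm at}(\mu)$.
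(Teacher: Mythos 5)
Your proposal is correct in substance, but it takes a genuinely different route from the paper --- indeed it is essentially the classical Calder\'on--Zygmund argument of Coifman and Weiss \cite{Co}, the very proof the paper describes as ``very technical'' and sets out to replace. The paper never decomposes a $(p,q)$-atom directly. Instead it argues through martingale theory: Doob's maximal inequality gives $L^1(\Omega)\subset H^p_m(\Omega)$ for $0<p<1$ (Lemma \ref{lem4.2}), so every simple $(p,1)$-atom lies in the martingale Hardy space with uniformly bounded norm and hence the dyadic atomic spaces coincide for all admissible $q$ (Corollary \ref{at of hp}, extended to dyadic filtrations, together with Proposition \ref{decHm}); then the Hyt\"onen--Kairema theorem on adjacent dyadic systems (Theorem \ref{kdya}) --- every ball is contained in a cube of comparable measure from one of finitely many grids $\D^t$ --- converts Coifman--Weiss atoms into dyadic martingale atoms and back, yielding $H^p_{\rm at}(\mu)=\sum_{t=1}^{K}H^p_{{\rm at},\D^t}(\mu)$ (Theorem \ref{Hardy}), from which Theorem \ref{thm2.15} drops out immediately. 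What each approach buys: yours is self-contained within classical real analysis on doubling spaces and needs no dyadic machinery, but the Whitney covering of each $\mathcal{O}_k$, the subordinate partition of unity, and the splitting of $g_{k+1}-g_k$ into localized mean-zero pieces on a quasi-metric space are exactly the heavy geometric bookkeeping the paper avoids; the paper's route outsources all geometry to \cite{Hy} and reduces the analysis to one elementary martingale computation, which is why it is much shorter. Two points to watch if you execute your plan: for $q=1$ (hence $p<1$) the maximal operator is only of weak type $(1,1)$, which suffices for $\mu(\mathcal{O}_k)\lesssim\lambda_k^{-1}\|a\|_1$, but the convergence $g_k\to a$ in $L^1$ then needs the refinement $\lambda\,\mu(\{Ma>\lambda\})\le C\int_{\{|a|>c\lambda\}}|a|\,d\mu\to 0$ rather than ``$Ma\in L^q$'', since $Ma\notin L^1$ in general; and, as you note at the end, the identity $a=\sum_j\mu_j b^j$ must be verified in $\bigl(\L_{\alpha_p}(\mu)\bigr)^*$ (resp.\ $\bigl(BMO(\mu)\bigr)^*$ when $p=1$), because that is how $H^{p,\infty}_{\rm at}(\mu)$ is defined in \eqref{De1}--\eqref{De2}; this follows from $L^q$-convergence of the partial sums on the fixed bounded set $\kappa B_0$ after subtracting a constant from the test function.
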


We end this section with the following duality theorem obtained in \cite{Co}.
 
\begin{thm}\label{dualhp}
$ \sk{H^1_{\rm at}(\mu)}^*= BMO(\mu) $, and 	$ \sk{H^p_{\rm at}(\mu)}^*= \L_{\alpha_p}(\mu) $ for $ 0<p<1 $.
\end{thm}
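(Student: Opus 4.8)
The plan is to prove both dualities by verifying the two opposite inclusions, throughout using the pairing $\langle f,g\rangle=\int_\Omega fg\,d\mu$, defined first on finite linear combinations of atoms. The easy inclusions $BMO(\mu)\hookrightarrow(H^1_{\rm at}(\mu))^*$ and $\L_{\alpha_p}(\mu)\hookrightarrow(H^p_{\rm at}(\mu))^*$ follow from testing against atoms. By Theorem \ref{thm2.15} I may use $(p,\infty)$-atoms. If $a$ is supported in a ball $B$ with $\int_B a\,d\mu=0$ and $g\in BMO(\mu)$, cancellation gives $|\langle a,g\rangle|=|\int_B a(g-g_B)\,d\mu|\le \|a\|_\infty\int_B|g-g_B|\,d\mu\le \mu(B)^{-1}\cdot\mu(B)\|g\|_{BMO}=\|g\|_{BMO}$; for $g\in\L_{\alpha_p}(\mu)$ the Lipschitz bound \eqref{lip} yields $|\langle a,g\rangle|\le \mu(B)^{-1/p}\cdot\mu(B)^{1+\alpha_p}\|g\|_{\L_{\alpha_p}}=\|g\|_{\L_{\alpha_p}}$ since $\alpha_p=\frac1p-1$. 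Summing over an atomic decomposition and using $\sum_j|\lambda_j|\le(\sum_j|\lambda_j|^p)^{1/p}$ for $0<p\le1$, then taking infima, gives boundedness of the associated functional; density of finite atomic sums completes this inclusion.

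For the hard inclusion, given $L\in(H^1_{\rm at}(\mu))^*$ I construct a representing $BMO$ function. Fix a ball $B$ and let $L^2_0(B)$ be the mean-zero $L^2$ functions supported in $B$. Every $h\in L^2_0(B)$ is a multiple of a $(1,2)$-atom, namely $h=\lambda a$ with $\lambda=\|h\|_2\,\mu(B)^{1/2}$, so $\|h\|_{H^1_{\rm at}}\le\|h\|_2\,\mu(B)^{1/2}$ and $L|_{L^2_0(B)}$ is bounded. Since $(L^2_0(B))^*=L^2(B)/\text{constants}$, there is $g^B\in L^2(B)$, unique modulo constants, with $L(h)=\int_B hg^B\,d\mu$ on $L^2_0(B)$. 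If $B\subset B'$ the two representatives differ by a constant on $B$, so exhausting $\Omega$ by an increasing sequence of balls and patching by additive constants produces a single locally integrable $g$ with $L(h)=\langle h,g\rangle$ for every mean-zero $h\in L^2$ of bounded support; in particular $L(a)=\langle a,g\rangle$ for every atom.

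To bound $\|g\|_{BMO}$, fix $B$ and compute $\frac1{\mu(B)}\int_B|g-g_B|\,d\mu$ by duality: choosing $\|h\|_\infty\le1$ supported in $B$ and replacing $h$ by $h-h_B\in L^2_0(B)$ (which leaves $\int_B(g-g_B)h\,d\mu$ unchanged, as $\int_B(g-g_B)\,d\mu=0$), one has $\int_B(g-g_B)h\,d\mu=L(h-h_B)$, while $\|h-h_B\|_{H^1_{\rm at}}\le\|h-h_B\|_2\,\mu(B)^{1/2}\le2\mu(B)$. Taking $h=\mathrm{sgn}(g-g_B)$ gives $\frac1{\mu(B)}\int_B|g-g_B|\,d\mu\le2\|L\|$, hence $\|g\|_{BMO}\le2\|L\|$.

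For $L\in(H^p_{\rm at}(\mu))^*$ the construction of $g$ is identical, now with $\|h\|_{H^p_{\rm at}}\le\|h\|_2\,\mu(B)^{1/p-1/2}$ for $h\in L^2_0(B)$, and the same duality computation yields the Campanato-type estimate $\sup_B\mu(B)^{-\alpha_p}\frac1{\mu(B)}\int_B|g-g_B|\,d\mu\le2\|L\|$. The main obstacle is to upgrade this integral oscillation bound to the \emph{pointwise} Lipschitz inequality \eqref{lip} defining $\L_{\alpha_p}(\mu)$: this requires the equivalence, valid on spaces of homogeneous type, between the Campanato condition and the pointwise condition, which rests on the Lebesgue differentiation theorem and the doubling geometry (one tests against mean-zero differences of normalized indicators concentrating at two points and passes to the limit at Lebesgue points). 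I expect this identification of $g$ with a genuinely Lipschitz representative to be the only delicate step; the remaining bookkeeping is routine.
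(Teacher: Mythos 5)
Your proposal takes a genuinely different route from the paper. The paper proves Theorem \ref{dualhp} by reduction to dyadic martingales: $H^p_{\rm at}(\mu)=\sum_{t=1}^{K}H^p_{{\rm at},\D^t}(\mu)$ (Theorem \ref{Hardy}, via the adjacent dyadic systems of Theorem \ref{kdya}), the Weisz-type martingale duality $(H^p_{{\rm at},\D^t}(\mu))^*=\Lambda_2^{\D^t}(\alpha_p)$, the elementary identity $\sk{\sum_t X_t}^*=\bigcap_t X_t^*$, and finally $\bigcap_{t=1}^{K}\Lambda_2^{\D^t}(\alpha_p)=\L_{\alpha_p}(\mu)$ (Theorem \ref{Lips}), respectively \eqref{bmo} when $p=1$. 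You instead reconstruct the classical Coifman--Weiss/Fefferman argument: testing atoms for the easy inclusion, then Riesz representation on $L^2_0(B)$, patching along an exhaustion by balls, and an oscillation estimate by duality. Your $p=1$ case is correct as written (and invoking Theorem \ref{thm2.15} to work with $(1,2)$-atoms is legitimate, since that theorem precedes \ref{dualhp}); this is exactly the route the paper describes as ``very technical'' and deliberately replaces, so what you lose is precisely the simplification that is the paper's point.

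The genuine gap is in the case $0<p<1$, at the step you yourself flag. Your construction gives the Campanato bound $\mu(B)^{-1-\alpha_p}\int_B|g-g_B|\,d\mu\lesssim\|L\|$, and the target space $\L_{\alpha_p}(\mu)$ is defined by the pointwise condition \eqref{lip}, so an upgrade is unavoidable; but the mechanism you sketch for it fails. If $h_\epsilon:=\mu(B(x,\epsilon))^{-1}1_{B(x,\epsilon)}-\mu(B(y,\epsilon))^{-1}1_{B(y,\epsilon)}$, then viewing $h_\epsilon$ as a single $(p,\infty)$-atom only gives $\|h_\epsilon\|_{H^p_{\rm at}(\mu)}\lesssim \mu(B)^{1/p}/\min\{\mu(B(x,\epsilon)),\mu(B(y,\epsilon))\}$, and likewise $|g_{B(x,\epsilon)}-g_{B(y,\epsilon)}|\le\|h_\epsilon\|_\infty\int_B|g-g_B|\,d\mu$ carries the same factor; both blow up as $\epsilon\to0$ unless $\mu$ has atoms at $x$ and $y$, so ``passing to the limit at Lebesgue points'' yields nothing. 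Making the dipoles uniformly bounded in $H^p_{\rm at}(\mu)$ (equivalently, running the Campanato chaining) requires a telescoping decomposition along balls whose \emph{measures} decrease geometrically; on a general space of homogeneous type reverse doubling may fail and $\mu$ may have atoms, so the radii must be chosen adapted to the measure and the atomic case treated separately --- this is the delicate content, not routine bookkeeping. You must either carry out that chaining, cite the Campanato--Lipschitz equivalence on homogeneous spaces as an external result (Mac\'{\i}as--Segovia; Meyers \cite{MN} in the Euclidean case), or use the paper's own device: Theorem \ref{thm4.5}, which obtains the $L^\infty$ oscillation bound by restricting to a cube, renormalizing to a probability space, and combining the embedding $L^1(\Omega)\subset H^p_m(\Omega)$ (Lemma \ref{lem4.2}) with duality (Corollary \ref{lam1}). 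As written, the crucial step is asserted with an incorrect justification.
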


The proofs of Theorem \ref{thm2.15} and Theorem \ref{dualhp} that appeared in \cite{Co} are very technical. In the following sections, by employing martingale methods, we give much simpler proofs of these facts. Our approach is based on the fact that $H^p_{\rm at}(\mu)$ for $0<p\leq 1$ is the finite sum of several dyadic martingale Hardy spaces.

\section{Dyadic systems on homogeneous spaces}\label{section6}
In this section, we start with introducing dyadic systems on homogeneous spaces, which first appeared in the work of Hyt\"{o}nen and Kairema \cite{Hy}. With the help of these dyadic structures, we then show that $H^p_{\rm at}(\mu)$ is exactly the finite sum of martingale Hardy spaces associated with some adjacent dyadic martingales, which extends Mei's result \cite{MT} to homogeneous spaces.

The following theorem concerning the existence of dyadic structures is due to  Hyt\"{o}nen and Kairema \cite{Hy}.
\begin{thm}\label{dyasys} Let $\Omega$ denote a homogeneous space. Suppose that the constants $0 < c_0 \leqslant C_0 < \infty $ and $\delta \in (0, 1) $ satisfy
$$ 12 A_0^3C_0\delta \leqslant c_0, $$
where $A_0$ is specified in the definition of quasi-metric, see \eqref{metric}. 

Given a set of reference points $\{z^k_\alpha\}_\alpha,\ \alpha \in \A_k $ (an index set), for every $k \in \ent$, with the properties that
$$ d(z^k_\alpha,z^k_\beta) \geqslant c_0\delta^k,\ (\alpha \neq \beta) \quad \min_\alpha d(x,z^k_\alpha) < C_0 \delta^k, \text{ for all } x\in \Omega, $$
one can construct families of sets $\tilde{Q}^k_\alpha \subseteq Q^k_\alpha \subseteq \bar{Q}^k_\alpha$, called open, half-open and closed dyadic cubes respectively, such that:
\begin{eqnarray}
&\tilde{Q}^k_\alpha \text{ and } \bar{Q}^k_\alpha \text{are the interior and closure of } Q^k_\alpha; \label{dya1}\\
&\text{if } k \leqslant l ,\text{ then either } Q^l_\beta \subseteq Q^k_\alpha \text{ or } Q^l_\beta \cap Q^k_\alpha = \emptyset; \\
&X = \bigcup\limits_\alpha Q^k_\alpha \text{ (disjoint union)  for all } k \in \ent; \\
&B(z^k_\alpha,c_1\delta^k) \subseteq Q^k_\alpha \subseteq B(z^k_\alpha,C_1\delta^k) =: B(Q^k_\alpha) \text{ where } c_1 = (3A_0^2)^{-1}c_0 \text{ and } C_1 = 2A_0C_0; \\
&\text{if } k \leqslant l \text{ and } Q^l_\beta \subseteq Q^k_\alpha \text{ then } B(Q^l_\beta) \subseteq B(Q^k_\alpha)\label{dya5}.
\end{eqnarray}
The open and closed cubes $ \tilde{Q}^k_\alpha $ and $ \bar{Q}^k_\alpha $ depend only on the points $ z^l_\beta $ for $ l \geqslant k $. The half-open cubes $Q^k_\alpha$ depend on $ z^l_\beta $ for $ l \geqslant \min(k, k_0) $, where $ k_0 \in \ent $ is a preassigned number entering the construction.
\end{thm}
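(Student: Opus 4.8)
The plan is to follow the greedy construction of Christ, as refined by Hyt\"onen and Kairema, producing the cubes scale by scale from the given reference points $\{z_\alpha^k\}$ and then organizing them into a nested family. First I would record that the stated hypotheses on $\{z_\alpha^k\}_{\alpha\in\A_k}$ are exactly the defining properties of a maximal $c_0\delta^k$-separated set: the separation $d(z_\alpha^k,z_\beta^k)\ge c_0\delta^k$ is assumed, while the covering bound $\min_\alpha d(x,z_\alpha^k)<C_0\delta^k$ says that the balls $B(z_\alpha^k,C_0\delta^k)$ cover $\Omega$. The next step is to organize the points across scales into a forest: to each center $z_\beta^{k+1}$ I assign a parent center $z_\alpha^k$ by choosing, with a fixed tie-breaking rule coming from a well-ordering of each $\A_k$, an $\alpha$ minimizing $d(z_\beta^{k+1},z_\alpha^k)$; by the covering property this parent satisfies $d(z_\beta^{k+1},z_\alpha^k)<C_0\delta^k$. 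Iterating the parent map gives every center a chain of ancestors and hence a tree structure on the disjoint union of the index sets.

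With the tree in hand, I would first form preliminary Voronoi-type cells $V_\alpha^k:=\{x\in\Omega : z_\alpha^k \text{ is the nearest scale-}k\text{ center to }x\}$, breaking ties by the fixed well-ordering; these partition $\Omega$ at each scale but are generally not nested across scales. The half-open cubes $Q_\alpha^k$ are then obtained by correcting the cells so as to force nesting through the parent tree: a point assigned at scale $k+1$ to a child of $z_\alpha^k$ must also be assigned at scale $k$ to $z_\alpha^k$, so one reassigns the boundary points up or down the ancestor chains until the family is consistent. This yields, for each $k$, a disjoint decomposition $\Omega=\bigsqcup_\alpha Q_\alpha^k$, together with nesting: for $k\le l$ one has $Q_\beta^l\subseteq Q_\alpha^k$ when $z_\beta^l$ descends from $z_\alpha^k$, and $Q_\beta^l\cap Q_\alpha^k=\emptyset$ otherwise. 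Taking interiors and closures produces $\tilde Q_\alpha^k$ and $\bar Q_\alpha^k$ satisfying \eqref{dya1}; that the open and closed cubes depend only on the $z_\beta^l$ with $l\ge k$ follows because these operations erase the boundary reassignments, which are the only place where coarser reference points could intervene.

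The quantitative heart is the two-sided inclusion $B(z_\alpha^k,c_1\delta^k)\subseteq Q_\alpha^k\subseteq B(z_\alpha^k,C_1\delta^k)$. For the inner inclusion I would show that any $x$ with $d(x,z_\alpha^k)<c_1\delta^k$ is, in the relevant greedy sense, strictly closer to $z_\alpha^k$ than to any competing scale-$k$ center, so it cannot be reassigned away; this plays the separation $d(z_\alpha^k,z_{\alpha'}^k)\ge c_0\delta^k$ against one application of the quasi-triangle inequality \eqref{metric}, and $c_1=(3A_0^2)^{-1}c_0$ is exactly the surviving threshold. For the outer inclusion I would bound $d(x,z_\alpha^k)$ for $x\in Q_\alpha^k$ by descending to a fine center containing $x$ and walking back up the ancestor chain; iterating \eqref{metric} along the chain introduces factors $A_0^{j}$, but the consecutive-center distances decay like $C_0\delta^{k+j}$, so the total is controlled by the geometric series $A_0C_0\delta^k\sum_{j\ge 0}(A_0\delta)^j$, which converges because the hypothesis forces $A_0\delta$ to be small, summing to the bound $C_1\delta^k$ with $C_1=2A_0C_0$.

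The hard part, and the reason for the hypothesis $12A_0^3C_0\delta\le c_0$, is reconciling these two inclusions simultaneously and consistently across all scales: the greedy assignment must never eject a point lying deep in $B(z_\alpha^k,c_1\delta^k)$ into a sibling or cousin cube, yet must keep every point of $Q_\alpha^k$ inside $B(z_\alpha^k,C_1\delta^k)$. Both requirements are dictated by how the quasi-triangle inequality degrades distances under the repeated comparisons of a point, its assigned center, and a competing center across adjacent scales, where up to three nested applications of $A_0$ appear, matching the factor $A_0^3$ in the hypothesis. The smallness condition is precisely what keeps these $A_0^3$-type errors, together with the tail of the geometric series in $A_0\delta$, below the separation gap $c_0\delta^k$, so that the nested assignment is geometrically consistent and the radii $c_1\delta^k$ and $C_1\delta^k$ can coexist. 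Finally \eqref{dya5} follows from the same ancestor-chain estimate applied to the enclosing balls $B(Q_\alpha^k)$, and the assertion about dependence on the reference points follows by tracking which $z_\beta^l$ enter the assignment of boundary versus interior points.
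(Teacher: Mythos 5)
The first thing to note is that the paper contains no proof of Theorem \ref{dyasys} at all: it is imported verbatim, constants included, from Hyt\"onen and Kairema \cite{Hy}, so your attempt can only be measured against the argument in that source. Parts of your outline are sound and do match that argument: the hypotheses are indeed those of maximal $c_0\delta^k$-separated sets; your nearest-center parent rule is legitimate, since the separation $d(z^k_\alpha,z^k_{\alpha'})\geq c_0\delta^k$ together with one application of \eqref{metric} shows that any $z^{k+1}_\beta$ with $d(z^{k+1}_\beta,z^k_\alpha)<(2A_0)^{-1}c_0\delta^k$ automatically has $z^k_\alpha$ as its unique nearest coarse center; and your outer inclusion is correct, because the hypothesis $12A_0^3C_0\delta\leq c_0\leq C_0$ gives $A_0\delta\leq 1/12$, so the ancestor-chain series $A_0C_0\delta^k\sum_{j\geq 0}(A_0\delta)^j$ is bounded by $2A_0C_0\delta^k=C_1\delta^k$.

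The genuine gap is in the middle step, which is the actual content of the theorem. You construct Voronoi cells at each scale and then assert that one ``reassigns the boundary points up or down the ancestor chains until the family is consistent.'' As described, this procedure is not well-defined: there are infinitely many scales and no finest one, so there is nothing to anchor the recursion on; a point's nearest-center assignments can conflict with the parent tree at infinitely many scales simultaneously; and you give no argument that the reassignment stabilizes, that each resulting $Q^k_\alpha$ is exactly the disjoint union of its children, or that the inner-ball inclusion $B(z^k_\alpha,c_1\delta^k)\subseteq Q^k_\alpha$ survives after points have been moved. The construction in \cite{Hy} avoids Voronoi cells entirely: the closed cubes are defined as closures of descendant reference-point sets, $\bar Q^k_\alpha:=\overline{\{z^l_\beta : (l,\beta)\preceq(k,\alpha)\}}$, the open cubes as complements of the other closed cubes of the same generation, and the half-open cubes are then extracted by a separate inductive selection between $\tilde Q^k_\alpha$ and $\bar Q^k_\alpha$. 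That architecture is also what explains the final sentence of the statement, which your sketch cannot reproduce: closed and open cubes depend only on $z^l_\beta$ with $l\geq k$ because descendants live at scales $\geq k$, whereas the half-open cubes depend on $l\geq\min(k,k_0)$ precisely because the boundary selection must be made coherently starting from a preassigned coarse generation $k_0$ and propagated to finer ones; in your version, corrections driven by nesting with finer scales would only ever involve $l\geq k$. Two further points are asserted rather than proved: the inner-ball property with $c_1=(3A_0^2)^{-1}c_0$ requires an induction over all finer scales (every fine center close to a point of $B(z^k_\alpha,c_1\delta^k)$ must have scale-$k$ ancestor equal to $z^k_\alpha$), not ``one application of the quasi-triangle inequality''; and your reading of $12A_0^3C_0\delta\leq c_0$ as ``three nested applications of $A_0$'' is a heuristic, not an estimate that appears anywhere in your argument.
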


It is obvious that the construction of the above dyadic systems is not unique, and it depends on the set of the reference points $\{z^k_\alpha\}_\alpha$. We denote this dyadic system by $ \D = \{ Q^k_\alpha \}_{k,\alpha} $. Let $ \F_k = \sigma( \{ Q^k_\alpha \}_{\alpha} ) $ be the $ \sigma $-algebra generated by $\{ Q^k_\alpha \}_{\alpha}$. Then it is clear that
$$ \cdots \subset \F_{k-1} \subset \F_k \subset \cdots , $$
which implies that $ \{ \F_k \}_{k \in \ent} $ is a filtration generated by atoms. Let $\F=\sigma{(\cup_{k\in \ent}\F_k)}$. Note that each $Q^k_\alpha \text{ is an atom of } \F_k$.

\begin{rem}  The standard dyadic grid on the real line is a dyadic system given by
$$  \F_k=\{[2^{-k}m,2^{-k}(m+1)) : m \in \ent\} \quad \text{ for all }  k \in \ent. $$
Similarly, an example of a dyadic system on  $ \real^n$ is given by the family of standard dyadic cubes in $ \real^n$.
\end{rem}

Recall that, for $f\in L^1_{\text{loc}}(\Omega, \F, \mu)$, the martingale maximal function, the square function and the conditional square function of $f$ associated with $ (\F_k)_{k \in \ent} $ are given by
$$ f^{*}: = \max\limits_{k \in \ent}|f_k|, \quad S(f):=\sk{ \sum\limits_{k \in \ent} |d_kf|^2}^\frac{1}{2} \quad  \text{and} \quad s(f):=\sk{ \sum\limits_{k \in \ent} \ce_{k-1}|d_kf|^2}^\frac{1}{2} , $$
respectively.

Let $0<p\leq 1$. The martingale Hardy space $ H^p_{m,\D}(\mu)$ is defined as the completion of the space consisting of all $f \in  L^1_{\text{loc}}  (\Omega) $ such that $ f^{\ast} \in L^p  (\Omega)$ with respect to the quasi-norm $\|f\|_{H^p_{m,\D}(\mu)}:=\|f^*\|_p$. 

We define $H^p_{\D}(\mu) $ and $h^p_{\D}(\mu)$ by the square functions and the conditional square functions respectively, with the additional assumption that
\begin{equation}\label{conmat}
\lim\limits_{n\rightarrow-\8} \int_{\Omega} \sup_{k\leq n}|f_k|^p d\mu=0 . 
\end{equation}
 From \eqref{conmat}, we have
 $$ \lim\limits_{n\rightarrow -\8} \sup_{ k\leq n} |f_k|=0. $$

Analogously, define the martingale atomic Hardy spaces $ H^{p, q}_{\rm at,\D}(\mu) $ $(0<p<1\leq q\leq\8 \ \text{or}\ p=1, 1<q\leq\8)$ like Definition \ref{ath}. 
 
In order to show Theorem \ref{dualhp}, we introduce the dual spaces of these atomic martingale Hardy spaces. For $0<p<1$, $q=1 \ \text{or} \ 2$ and $\alpha_p=\frac{1}{p}-1$, define

\begin{align*}
BMO^{\D}(\mu) &:= \lk{f\in L^1_{\text{loc}}(\Omega, \mu)\ : \ \| f \|_{BMO^{\D}(\mu)} := \sup_{Q \in \D} \dfrac{1}{\mu(Q)} \int_{Q} |f-f_Q|d\mu < \infty},\\
\Lambda_q^{\D}(\alpha_p) &:=\lk{f\in L^1_{\text{loc}}(\Omega, \mu)\ : \ \| f \|_{\Lambda_q^{\D}(\alpha_p)} := \sup_{Q \in \D} {\mu(Q)}^{-\frac{1}{q}-\alpha_p} \sk{\int_{Q} |f-f_Q|^q d\mu }^\frac{1}{q}< \infty}.
\end{align*}
The spaces $\Lambda_q^{\D}(\alpha_p)$ are called the martingale Lipschitz spaces with respect to $\D$. Note that $\Lambda_1^{\D}(\alpha_p) =\Lambda_2^{\D}(\alpha_p) $.
 
Arguing as in \cite{WF1}, one can show that
$$  \sk{H^{1}_{\rm at,\D}(\mu)}^*=BMO^{\D}(\mu),$$
and for $0<p<1$,
$$
  \sk{H^{p}_{\rm at,\D}(\mu)}^*=\Lambda_q^{\D}(\alpha_p).
$$
\begin{rem}
Since every simple $(p,q)$-atom is locally supported, by Corollary \ref{at of hp}, we conclude that for $ 0<p<1\leq q\leq\8 \ \text{or}\ p=1, 1<q\leq\8 $
$$ H^{p, q}_{\rm at,\D}(\mu)=H^{p, \8}_{\rm at,\D}(\mu).  $$
Thus we are only concerned with $H^{p}_{\rm at,\D}(\mu):=H^{p, \8}_{\rm at,\D}(\mu)$.
\end{rem}

\begin{proposition}\label{decHm}
For $ 0<p\leqslant 1 $, the martingale Hardy spaces defined above are mutually equivalent. Namely, $ H^p_{\D}(\mu)=H^p_{m,\D}(\mu)=h^p_{\D}(\mu)=H^{p}_{\rm at,\D}(\mu)$.
\end{proposition}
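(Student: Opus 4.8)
The plan is to reduce all four equivalences to a single structural fact — that the dyadic filtration $\{\F_k\}_{k\in\ent}$ is regular in the sense of Definition \ref{reg_filt} — and then to transfer (a suitably adapted version of) the classical theory of regular martingale Hardy spaces. The point is that regularity is precisely what forces the conditional square function, the square function, the maximal function and the atomic norm to be comparable; without it these spaces genuinely differ, as Theorem \ref{dec of H} already illustrates.

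First I would verify regularity. Each half-open cube $Q^k_\alpha$ is contained in a unique parent cube $Q^{k-1}_\beta$ at the previous level, and the nested ball inclusions $B(z^k_\alpha,c_1\delta^k)\subseteq Q^k_\alpha\subseteq B(z^k_\alpha,C_1\delta^k)$ from Theorem \ref{dyasys}, combined with the doubling property \eqref{double}, show that the ball $B(z^{k-1}_\beta,C_1\delta^{k-1})$ containing the parent and the ball $B(z^k_\alpha,c_1\delta^k)$ contained in the child have radii differing only by the fixed factor $\delta^{-1}$; hence $\mu(Q^{k-1}_\beta)\lesssim\mu(Q^k_\alpha)$, with an implicit constant depending only on $A_0,\delta,c_0,C_0$ and $C_\mu$. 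Given an arbitrary $F_k\in\F_k$, written as a disjoint union of atoms $Q^k_\alpha$, I would take $G_k\in\F_{k-1}$ to be the union of the corresponding parent cubes; disjointness of the children together with the parent–child estimate yields $\mu(G_k)\lesssim\mu(F_k)$, which is exactly the regularity of Definition \ref{reg_filt}.

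Granting regularity, the chain $H^p_{\D}(\mu)=H^p_{m,\D}(\mu)=h^p_{\D}(\mu)=H^{p}_{\rm at,\D}(\mu)$ would be established by adapting the standard arguments of \cite{WF1}, \cite{WF2} and \cite{Long}. The identity $H^p_{m,\D}(\mu)=H^p_{\D}(\mu)$ follows from the martingale Burkholder–Davis–Gundy inequalities, which relate $f^{*}$ and $S(f)$ for all $0<p<\infty$ and require no regularity. Regularity then bridges the gap between $S(f)$ and the conditional square function $s(f)$, giving $h^p_{\D}(\mu)=H^p_{\D}(\mu)$. For the atomic identity $H^{p}_{\rm at,\D}(\mu)=h^p_{\D}(\mu)$ I would run the usual stopping-time decomposition: with $\tau_i=\inf\{k:|f_k|>2^{i}\}$, the increments of $f$ across consecutive stopping regions, normalized by $2^{i}\mu(\{f^{*}>2^{i}\})^{1/p}$, are constant multiples of $(p,\infty)$-atoms supported on atoms $A\in\F$ with the correct size bound $\|a\|_{\infty}\lesssim\mu(A)^{-1/p}$ — here regularity produces the normalization — while the reverse inclusion, that each atom lies in $h^p_{\D}(\mu)$, is the local estimate already recorded in the probability setting in Corollary \ref{at of hp}.

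The main obstacle, and the genuine departure from the probabilistic setting, is that the filtration is two-sided (indexed over all of $\ent$) and $\mu$ may be infinite, so there is no trivial bottom $\sigma$-algebra and no automatic control of $f_k$ as $k\to-\infty$. This is exactly the role of the normalizing hypothesis \eqref{conmat}: it forces $f_k\to0$ as $k\to-\infty$ strongly enough that the series defining $S(f)$, $s(f)$ and the stopping-time decomposition run over the full range $k\in\ent$ with no boundary contribution, and that the telescoping identity $f=\sum_{k\in\ent}d_kf$ holds in the relevant quasi-norm. I would therefore carry out each step above over $\ent$ and, at every passage to a limit, verify that \eqref{conmat} annihilates the mass coming from $-\infty$; once this is in place, the finite-measure proofs transfer essentially verbatim.
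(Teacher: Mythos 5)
Your proposal is correct and follows essentially the same route as the paper: both arguments reduce the statement to classical regular-martingale theory --- Burkholder--Davis--Gundy-type inequalities linking $f^*$, $S(f)$ and $s(f)$, plus the Weisz stopping-time atomic decomposition for the identity with $H^{p}_{\rm at,\D}(\mu)$ --- and both use the normalization \eqref{conmat} to annihilate the boundary contribution $\int_\Omega \sup_{k\leq -n}|f_k|^p\,d\mu$ as $n\to\infty$ in the two-sided filtration. Your explicit verification that the dyadic filtration $\{\F_k\}_{k\in\ent}$ is regular (via the sandwich $B(z^k_\alpha,c_1\delta^k)\subseteq Q^k_\alpha\subseteq B(z^k_\alpha,C_1\delta^k)$ and the doubling property) is a useful addition that the paper leaves implicit in its appeal to ``a well-known inequality of Burkholder--Davis--Gundy.''

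One assertion needs correcting, although it does not derail your argument: the claim that the BDG inequalities ``relate $f^{*}$ and $S(f)$ for all $0<p<\infty$ and require no regularity'' is false. For general martingales the equivalence $\|f^*\|_p\approx\|S(f)\|_p$ holds only for $1\leq p<\infty$ (Burkholder--Gundy, Davis); for $0<p<1$ it fails without regularity, which is precisely why the three martingale Hardy spaces of Definition \ref{hardy} differ in general and why the paper restricts to regular martingales in Sections \ref{case1}--\ref{case2}. Since you establish regularity of $\{\F_k\}_{k\in\ent}$ before invoking the inequality, the version you actually need (BDG for regular martingales, valid for all $0<p<\infty$) is available, so the repair is simply to cite the correct form of the theorem rather than to assert that regularity plays no role in that step. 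A similar small caution applies to your appeal to Corollary \ref{at of hp} for the reverse atomic inclusion: its proof rests on Doob's inequality on a probability space, so on an infinite-measure $\Omega$ you should instead run the standard local $L^2$ estimate on the supporting cube (equivalently, renormalize the cube to a probability space), which gives $\|s(a)\|_p\lesssim 1$ uniformly over simple $(p,\infty)$-atoms $a$.
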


\begin{proof} Let $p \in (0,1]$ be fixed. First, we show $H^p_{\D}(\mu)=H^p_{m,\D}(\mu)$. Suppose that $f\in H^p_{m,\D}(\mu)$. Then for any $n>0$, by a well-known inequality of Burkholder--Davis--Gundy,
$$ \int_{\Omega} \sk{|f_{-n}|^2+\sum_{k=-n+1}^{n}|d_kf|^2}^\frac{p}{2} d\mu\lesssim \int_{\Omega}\sup_{-n\leq k\leq n}|f_k|^pd\mu \lesssim   \int_{\Omega}(f^*)^p d\mu $$
which yields by letting $n\rightarrow\8$ and by Fatou's lemma
$$ \|S(f)\|_p\lesssim \|f^*\|_p.  $$
Thus $H^p_{m,\D}(\mu)\subset H^p_{\D}(\mu)$.
	
Conversely, if $f\in H^p_{\D}(\mu)$, then for $n>0$,
$$ \int_{\Omega}\sup_{-n\leq k\leq n}|f_k|^pd\mu  \lesssim  \int_{\Omega} \sk{|f_{-n}|^2+\sum_{k=-n+1}^{n}|d_kf|^2}^\frac{p}{2} d\mu, $$
and hence
 \begin{equation}\label{6.7}
 \int_{\Omega}\sup_{-n\leq k\leq n}|f_k|^pd\mu \lesssim \int_{\Omega}\sup_{k\leq -n}|f_{k}|^p d\mu + \int_{\Omega} |S(f)|^p d\mu<\8  .
 \end{equation}
 Then by letting $n\rightarrow\8$ and applying Fatou's lemma, we obtain $ \|f^*\|_p <\8 $ and 
 $$  \|f^*\|_p\lesssim \|S(f)\|_p. $$
 Therefore, $H^p_{\D}(\mu)\subset H^p_{m,\D}(\mu)$ and   $H^p_{m,\D}(\mu)= H^p_{\D}(\mu)$. 
    
One shows $H^p_{m,\D}(\mu)= h^p_{\D}(\mu)$ in a completely analogous way. To show $h^p_{\D}(\mu)=H^{p}_{\rm at,\D}(\mu)$, one can argue by mimicking the corresponding proof in \cite{WF1} and \cite{WF2}. We omit the details.
\end{proof}

The following theorem can be found in \cite{Hy} and ensures that there exist enough dyadic cubes to cover all balls on homogeneous spaces.

\begin{thm}\label{kdya}
Given a set of reference points $\{z^k_\alpha\}, k \in \ent, \alpha \in \A_k$, suppose that there exists constant $\delta \in (0, 1) $ that satisfies $96A^6_0\delta \leqslant 1$. Then there exists a finite collection of families $\D^t, t = 1, 2, \cdots, K = K(A_0, A_1, \delta) < \infty $, where each $\D^t$ is a collection of dyadic cubes, associated to dyadic points $\{z^k_\alpha\}, k \in \ent, \alpha \in \A_k $, with the properties \eqref{dya1}-\eqref{dya5} in Theorem \ref{dyasys}. 
	
In addition, the following property is satisfied:
\begin{equation}\label{ballcube}
\text{for every }  B(x, r) \subseteq \Omega, \text{ there exist $ t $ and $Q \in \D^t$  with $B(x,r) \subseteq Q$ and $diam(Q) \leqslant Cr$.}
\end{equation}
The constant $C < \infty$ in \eqref{ballcube} only depends on the quasi-metric constant $A_0$ and the parameter $\delta$.
\end{thm}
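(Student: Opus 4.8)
The plan is to reduce the covering property \eqref{ballcube} to a statement about a single dyadic system and then to overcome the resulting boundary obstruction by passing to finitely many adjacent systems. For a fixed system $\D$ and a ball $B(x,r)$, the natural candidate cube is the unique $Q^k_\alpha \ni x$ at the generation $k$ for which $\delta^k \approx r$. By the inner inclusion $B(z^k_\alpha, c_1\delta^k) \subseteq Q^k_\alpha$ from Theorem \ref{dyasys}, one has $B(x,r) \subseteq Q^k_\alpha$ as soon as $x$ lies deep inside the cube, i.e. within distance of order $c_1\delta^k/A_0$ of the reference point $z^k_\alpha$; the quasi-triangle inequality \eqref{metric} is what lets one absorb the factor $A_0$ here. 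The diameter bound is then immediate from the outer inclusion $Q^k_\alpha \subseteq B(z^k_\alpha, C_1\delta^k)$, which gives $\mathrm{diam}(Q^k_\alpha) \le 2A_0 C_1 \delta^k \lesssim r$ with a constant depending only on $A_0$ and $\delta$. The trouble is that $x$ may sit near the boundary of its generation-$k$ cube, and then no cube of $\D$ at that generation contains $B(x,r)$.

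To remedy this I would construct $K$ families of reference points $\{z^{k}_{\alpha}\}$, one per system $\D^t$, arranged so that for every $x \in \Omega$ and every generation $k$ there is at least one index $t$ for which $x$ lies within order $c_1\delta^k$ of a generation-$k$ reference point of $\D^t$; that is, every point is deep inside a cube of at least one of the $K$ systems at every scale. These families are produced by perturbing a single maximal $c_0\delta^k$-separated net, and since the half-open cubes at generation $k$ depend only on the reference points at generations $\ge \min(k,k_0)$, the perturbations must be chosen coherently across scales so that the deep-inside property holds simultaneously at all generations. The finiteness $K = K(A_0, A_1, \delta)$ is forced by the geometric doubling hypothesis: inside any ball of radius $C_0\delta^k$ only boundedly many $c_0\delta^k$-separated points can lie, with a bound depending only on $A_1$ and $\delta$, so there are only finitely many possible ``offset classes'' of a point relative to a net, and a greedy selection over these classes yields finitely many systems that jointly cover all positions.

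With the systems in hand the verification of \eqref{ballcube} is routine. Given $B(x,r)$, fix the largest $k$ with $c_1\delta^k \ge A_0\, r$, so that $\delta^k \approx r$; pick the system $\D^t$ furnished by the construction in which $x$ lies within $c_1\delta^k$ of some $z^k_\alpha$, and set $Q := Q^k_\alpha \in \D^t$. Then $B(x,r) \subseteq B(z^k_\alpha, c_1 \delta^k) \subseteq Q$ and $\mathrm{diam}(Q) \le 2A_0 C_1 \delta^k \le C r$, with $C$ depending only on $A_0$ and $\delta$ through $c_1, C_1$. I expect the main obstacle to be precisely the construction-and-counting step of the second paragraph: producing reference-point families whose union guarantees the deep-inside property uniformly across all scales at once, while extracting a bound on $K$ purely from the doubling constant $A_1$ and the parameter $\delta$. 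This is exactly where the geometric doubling of $\Omega$ is indispensable, and it is what makes a \emph{deterministic finite} collection of adjacent systems available rather than merely a single (possibly boundary-straddling) system.
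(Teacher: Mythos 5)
First, a point of reference: the paper does not prove Theorem \ref{kdya} at all --- it is quoted from Hyt\"onen--Kairema \cite{Hy} (``The following theorem can be found in \cite{Hy}''), so your attempt can only be measured against the proof given there. Your first and third paragraphs, which reduce \eqref{ballcube} to a ``deep inside'' property --- at every scale $k$, every $x\in\Omega$ lies within a small multiple of $\delta^k$ of a generation-$k$ reference point of at least one system $\D^t$ --- do follow the strategy that underlies \cite{Hy}, and the diameter bound via the outer inclusion $Q^k_\alpha\subseteq B(z^k_\alpha,C_1\delta^k)$ is correct. There is, however, a quantitative slip in your verification: from $d(x,z^k_\alpha)\leqslant c_1\delta^k$ and $r\leqslant c_1\delta^k/A_0$ you conclude $B(x,r)\subseteq B(z^k_\alpha,c_1\delta^k)$, but the quasi-triangle inequality \eqref{metric} only yields $d(y,z^k_\alpha)\leqslant A_0\bigl(r+d(x,z^k_\alpha)\bigr)\leqslant (1+A_0)\,c_1\delta^k$ for $y\in B(x,r)$. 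The deep-inside radius must be of order $c_1\delta^k/(2A_0)$, as in your own first paragraph; this is fixable, but it strengthens the covering requirement that your construction of the systems has to deliver.

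The genuine gap is your second paragraph, and you flag it yourself: the existence of finitely many reference-point families, coherent across all scales, whose deep-inside regions jointly cover $\Omega$ at every scale, is not a routine perturbation-plus-pigeonhole fact --- it \emph{is} the theorem, and it is what the bulk of the corresponding proof in \cite{Hy} is devoted to. Two specific difficulties are untouched by your sketch. First, within each system $\D^t$ the cubes must be nested across generations (the second property in Theorem \ref{dyasys}), so the perturbations at different scales cannot be chosen independently; one must exhibit, for each $t$, a full family $\{z^{k,t}_\alpha\}_{k\in\ent,\,\alpha\in\A_k}$ satisfying the separation and covering hypotheses of Theorem \ref{dyasys} at every $k$ simultaneously, and only then can the construction of Theorem \ref{dyasys} be run to produce $\D^t$. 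Second, ``only finitely many possible offset classes of a point relative to a net'' is not a well-defined finite set: the position of a point relative to a net varies over a continuum, and geometric doubling only bounds the \emph{number of net points} near a given point, not the number of ``offsets''. Converting that local bound into a deterministic list of $K(A_0,A_1,\delta)$ systems whose deep-inside regions cover every point at every scale requires the combinatorial selection argument carried out in \cite{Hy} (new dyadic point families are built by choosing, for each coarse-scale point, among boundedly many nearby finer-scale candidates, with the count of configurations controlled by doubling); a greedy selection as you describe has no a priori reason to terminate after boundedly many steps uniformly over all scales and all points. In short, your proposal correctly reduces \eqref{ballcube} to the real core of the theorem, but the core itself is asserted rather than proved.
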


By virtue of Proposition \ref{decHm} and Theorem \ref{kdya}, we have the following theorem, which extends Mei's  result in \cite{MT}.
\begin{thm}\label{Hardy} For $ 0<p\leqslant 1$, we have
\begin{equation}\label{H to Hd}
H^p_{\rm at}(\mu) = \sum_{t=1}^{K} H^p_{\rm at,\D^t}(\mu) = \sum_{t=1}^{K} H^p_{\D^t}(\mu) = \sum_{t=1}^{K} H^p_{m,\D^t}(\mu)=\sum_{t=1}^{K} h^p_{\D^t}(\mu) .
\end{equation}
\end{thm}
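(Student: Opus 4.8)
The plan is to reduce the whole chain of equalities in \eqref{H to Hd} to its first link $H^p_{\rm at}(\mu) = \sum_{t=1}^{K} H^p_{\rm at,\D^t}(\mu)$. Indeed, Proposition \ref{decHm} applied to each individual dyadic system $\D^t$ gives $H^p_{\rm at,\D^t}(\mu) = H^p_{\D^t}(\mu) = H^p_{m,\D^t}(\mu) = h^p_{\D^t}(\mu)$ with equivalent quasi-norms; since replacing each summand of a \emph{finite} sum of quasi-normed spaces by an equivalent space yields the same sum with equivalent quasi-norm, the last three equalities in \eqref{H to Hd} follow immediately. Thus only the first equality requires proof, and I would establish it by two inclusions, matching atoms on each side.

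The guiding principle is that a martingale simple $(p,\infty)$-atom for some $\D^t$ and a Coifman--Weiss $(p,\infty)$-atom differ only in whether their support and cancellation are attached to a dyadic cube or to a ball, and on a homogeneous space a dyadic cube and a suitably chosen comparable ball have comparable measure. For the inclusion $\sum_{t=1}^K H^p_{\rm at,\D^t}(\mu) \subseteq H^p_{\rm at}(\mu)$ I would start from a martingale atom $a$ supported on a cube $Q = Q^k_\alpha \in \D^t$ with $\ce_k(a) = 0$ and $\|a\|_\infty \le \mu(Q)^{-1/p}$. The sandwich $B(z^k_\alpha, c_1\delta^k) \subseteq Q \subseteq B(Q) = B(z^k_\alpha, C_1\delta^k)$ from Theorem \ref{dyasys}, together with the doubling property \eqref{double} and the fact that $C_1/c_1$ is a structural constant, gives $\mu(B(Q)) \approx \mu(Q)$. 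Setting $B := B(Q)$ we have $\supp(a) \subseteq Q \subseteq B$, $\int_B a\, d\mu = \int_Q a\, d\mu = 0$ (the latter because $\ce_k(a)=0$ forces the average of $a$ over the atom $Q$ to vanish), and $\|a\|_\infty \le \mu(Q)^{-1/p} \approx \mu(B)^{-1/p}$; hence $a$ is a bounded multiple of a Coifman--Weiss $(p,\infty)$-atom. Applying this to an atomic decomposition yields the continuous inclusion for each $t$, and summing over $t=1,\dots,K$ gives the claim.

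For the reverse inclusion $H^p_{\rm at}(\mu) \subseteq \sum_{t=1}^K H^p_{\rm at,\D^t}(\mu)$, which is where the finiteness of the collection $\{\D^t\}$ is decisive, I would take $f \in H^p_{\rm at}(\mu)$ with a decomposition $f = \sum_j \lambda_j a^j$ into Coifman--Weiss atoms supported on balls $B_j = B(x_j,r_j)$ and with $(\sum_j |\lambda_j|^p)^{1/p} \approx \|f\|_{H^p_{\rm at}(\mu)}$. For each $j$, the covering property \eqref{ballcube} of Theorem \ref{kdya} supplies an index $t_j \in \{1,\dots,K\}$ and a cube $Q_j \in \D^{t_j}$ with $B_j \subseteq Q_j$ and $\mathrm{diam}(Q_j) \le C r_j$; combined with \eqref{metric} this forces $Q_j \subseteq B(x_j, C'r_j)$, whence $\mu(Q_j) \approx \mu(B_j)$ by doubling. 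Exactly as before, $\supp(a^j) \subseteq B_j \subseteq Q_j$ and $\int_{Q_j} a^j\, d\mu = \int_{B_j} a^j\, d\mu = 0$ show that $a^j$ is a bounded multiple of a martingale $(p,\infty)$-atom for $\D^{t_j}$. The key move is then to regroup the decomposition by system: writing $f = \sum_{t=1}^K f_t$ with $f_t := \sum_{j:\, t_j = t} \lambda_j a^j$, each $f_t$ is assembled from $\D^t$-atoms whose coefficients satisfy $\sum_{j:\, t_j = t} |\lambda_j|^p \le \sum_j |\lambda_j|^p < \infty$, so $f_t \in H^p_{\rm at,\D^t}(\mu)$ with $\|f_t\|_{H^p_{\rm at,\D^t}(\mu)} \lesssim \|f\|_{H^p_{\rm at}(\mu)}$ and therefore $f \in \sum_{t=1}^K H^p_{\rm at,\D^t}(\mu)$.

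The main obstacle is this reverse inclusion, and within it the faithful transfer of both atomic conditions from the ball $B_j$ to the enclosing cube $Q_j$: one must check $\mu(Q_j) \approx \mu(B_j)$ (which rests on combining $B_j \subseteq Q_j$, $\mathrm{diam}(Q_j) \lesssim r_j$ and doubling) and that the cancellation survives, $\int_{Q_j} a^j\, d\mu = 0$, which holds precisely because $\supp(a^j) \subseteq B_j \subseteq Q_j$. The conceptual heart---and the reason finitely many adjacent systems suffice---is Theorem \ref{kdya}: it guarantees that every ball is captured by a comparable cube in one of the $K$ systems, and the subsequent regrouping of the (possibly infinite) atomic series into $K$ subseries is harmless since $\ell^p$-summability of the coefficients passes to each subseries.
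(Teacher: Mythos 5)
Your proposal is correct and follows essentially the same route as the paper: reduce to the single equality $H^p_{\rm at}(\mu) = \sum_{t=1}^{K} H^p_{\rm at,\D^t}(\mu)$ via Proposition \ref{decHm}, then compare atoms in both directions, using the ball-in-comparable-cube property \eqref{ballcube} of Theorem \ref{kdya} for one inclusion and the cube-in-comparable-ball sandwich of Theorem \ref{dyasys} together with doubling for the other. Your write-up is somewhat more explicit than the paper's (e.g.\ the regrouping of the atomic series into $K$ subseries and the transfer of the cancellation condition between $\int_Q a\,d\mu=0$ and $\ce_k(a)=0$), but these are details the paper leaves implicit rather than a genuinely different argument.
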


\begin{proof} Let $p\in (0,1]$ be fixed. In view of  Proposition \ref{decHm}, it suffices to show $ H^p_{\rm at}(\mu) = \sum\limits_{t=1}^{K} H^p_{\rm at,\D^t}(\mu) $. We prove it via comparing the atoms. Let $ a$ be a $ (p,\8) $-atom in $H^p_{\rm at}(\mu) $. Then there exists a ball $ B $ such that
$$ \supp(a) \subset B,\ \| a \|_\8 \leqslant \sk{\mu(B)}^{-\frac1p},\ \int_B a(x) d\mu = 0. $$
	
By Theorem \ref{kdya}, there exist $ t $ and a cube $ Q \in \D^t $ such that $ B \subset Q $, and $ \mu(Q) \lesssim \mu(B)$. 
Then
$$ \supp(a) \subset B \subset Q,\ \| a \|_\8 \leqslant \sk{\mu(B)}^{-\frac1p} \lesssim \sk{\mu(Q)}^{-\frac1p} ,\ \int_Q a d\mu = 0, $$
which implies that $a$ is a constant multiple of a simple $(p,\8)$-atom in $ H^p_{\rm at,\D^t}(\mu) $. Thus
\begin{equation}\label{H to Hd1}
H^p_{\rm at}(\mu) \subset \sum_{t=1}^{K} H^p_{\rm at,\D^t}(\mu).
\end{equation}
	
For any $t=1,2,\cdots,K$ and for any given simple $ (p,\8) $-atom $ b$ in $H^p_{\rm at,\D^t}(\mu) $, there exists $ Q \in \D^t $ such that
$$ \supp(b) \subset Q,\ \| b \|_\8 \leqslant \sk{\mu(Q)}^{-\frac1p},\ \int_Q b d\mu =  0. $$
By Theorem \ref{dyasys}, there exists a ball $ B $ such that $ Q \subset B $ and $\mu(Q)\gtrsim \mu(B)$. Hence
 $$ \supp(b) \subset Q \subset B,\ \| b \|_\8 \leqslant \sk{\mu(Q)}^{-\frac1p} \lesssim \sk{\mu(B)}^{-\frac1p},\ \int_B b  d\mu =  0, $$
which implies that a multiple of $ b $ is also a $(p,\8)$-atom in $ H^p_{\rm at}(\mu) $, thus
\begin{equation}\label{H to Hd2}
\sum_{t=1}^{K} H^p_{\rm at,\D^t}(\mu) \subset H^p_{\rm at}(\mu).
\end{equation}
To complete the proof of the theorem, combine \eqref{H to Hd1} and \eqref{H to Hd2}.
\end{proof}

\begin{rem} Theorem \ref{thm2.15} follows immediately from Corollary \ref{at of hp}, Proposition \ref{decHm} and Theorem \ref{Hardy}, which simplifies the original proof by Coifman and Weiss in \cite{Co}.
\end{rem}

By duality and Theorem \ref{Hardy}, we recover  the following result of  \cite{Hy}, which is an extension of a result due to Mei \cite{MT}: 
\begin{equation}\label{bmo}
BMO(\mu) = \bigcap_{t=1}^{K} BMO^{\D^t}(\mu).
\end{equation}
We will now establish an analogous result for $\L_{\alpha_p}(\mu)$ $(0<p<1)$.

\begin{thm}\label{Lips}
For $0<p<1$,
$$ \L_{\alpha_p}(\mu) = \bigcap_{t=1}^{K} \Lambda_2^{\D^t}(\alpha_p). $$
\end{thm}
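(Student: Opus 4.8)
The plan is to argue by duality, in exact parallel with the derivation of the decomposition \eqref{bmo} for $BMO(\mu)$. All three ingredients are already available in the excerpt. First, Theorem \ref{dualhp} identifies $\L_{\alpha_p}(\mu)$ with $(H^p_{\rm at}(\mu))^*$ for $0<p<1$, via the integral pairing $\langle l,f\rangle=\int_\Omega lf\,d\mu$ acting on finite linear combinations of atoms. Second, Theorem \ref{Hardy} gives $H^p_{\rm at}(\mu)=\sum_{t=1}^{K}H^p_{\rm at,\D^t}(\mu)$. Third, the dyadic duality recorded in \S\ref{section6} identifies $(H^p_{\rm at,\D^t}(\mu))^*$ with $\Lambda_2^{\D^t}(\alpha_p)$, again through the same integral pairing. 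Assembling these should yield $\L_{\alpha_p}(\mu)=(H^p_{\rm at}(\mu))^*=(\sum_t H^p_{\rm at,\D^t}(\mu))^*=\bigcap_t(H^p_{\rm at,\D^t}(\mu))^*=\bigcap_t\Lambda_2^{\D^t}(\alpha_p)$.

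The only genuine lemma I need is the elementary principle that the dual of a finite sum of quasi-normed spaces is the intersection of the duals. If $X_1,\dots,X_K$ are continuously embedded in a common topological vector space and $\sum_t X_t$ carries the quasi-norm $\|x\|=\inf\{\sum_t\|x_t\|_{X_t}:x=\sum_t x_t\}$ of the Definition in \S\ref{notation}, then a functional $\phi$ is bounded on $\sum_t X_t$ iff each $\phi|_{X_t}$ is bounded, with $\|\phi\|_{(\sum_t X_t)^*}=\max_t\|\phi|_{X_t}\|_{X_t^*}$. One direction uses $\|x\|_{\sum}\le\|x\|_{X_t}$ for $x\in X_t$; the other uses $|\phi(\sum_t x_t)|\le\sum_t\|\phi|_{X_t}\|\,\|x_t\|_{X_t}\le(\max_t\|\phi|_{X_t}\|)\sum_t\|x_t\|_{X_t}$ and then takes the infimum over decompositions. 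This holds verbatim for quasi-norms, so the quasi-Banach nature of $H^p_{\rm at}(\mu)$ for $p<1$ causes no difficulty, and no Hahn--Banach extension is invoked.

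The point that needs care — and which I regard as the crux — is upgrading the abstract identity $(\sum_t X_t)^*=\bigcap_t X_t^*$ to an honest identity of function spaces. This hinges on consistency of the pairings: in all three duality statements the representing object is a locally integrable function $l$ and its action on a finite atomic $f$ is the single bilinear form $\int_\Omega lf\,d\mu$. Since $H^p_{\rm at,\D^t}(\mu)\subset H^p_{\rm at}(\mu)$ continuously and the pairing is unchanged under this inclusion, the restriction to $X_t=H^p_{\rm at,\D^t}(\mu)$ of the functional represented by $l$ is represented by the same $l$. Hence, for a fixed $l$, the equivalences $l\in\L_{\alpha_p}(\mu)$ $\iff$ $\langle l,\cdot\rangle$ is bounded on $H^p_{\rm at}(\mu)$ $\iff$ $\langle l,\cdot\rangle$ is bounded on every $H^p_{\rm at,\D^t}(\mu)$ $\iff$ $l\in\Lambda_2^{\D^t}(\alpha_p)$ for all $t$ are genuine, and the max-norm formula gives $\|l\|_{\L_{\alpha_p}}\approx\max_t\|l\|_{\Lambda_2^{\D^t}(\alpha_p)}$, which is exactly $\L_{\alpha_p}(\mu)=\bigcap_{t=1}^K\Lambda_2^{\D^t}(\alpha_p)$ with equivalent quasi-norms.

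Finally, I would note that a duality-free proof is possible but less efficient. The inclusion $\L_{\alpha_p}(\mu)\subseteq\bigcap_t\Lambda_2^{\D^t}(\alpha_p)$ is direct: by the containments of Theorem \ref{dyasys} each cube $Q\in\D^t$ sits in a ball $B(Q)$ with $\mu(B(Q))\approx\mu(Q)$, so the pointwise Lipschitz bound controls $\mu(Q)^{-1/2-\alpha_p}(\int_Q|l-l_Q|^2\,d\mu)^{1/2}$ by $\|l\|_{\L_{\alpha_p}}$. The reverse inclusion is the hard part: one must pass from control of the mean oscillations of $l$ over the cubes of all $K$ systems to a pointwise Lipschitz estimate on arbitrary balls, using Theorem \ref{kdya} to trap a given ball in a cube of comparable measure together with a Campanato-type telescoping argument along shrinking cubes. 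The duality route bypasses precisely this Campanato embedding, which is why I would adopt it.
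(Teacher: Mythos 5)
Your main route has a genuine problem in the context of this paper: it is circular. The chain you run, $\L_{\alpha_p}(\mu)=(H^p_{\rm at}(\mu))^*=\big(\sum_{t}H^p_{{\rm at},\D^t}(\mu)\big)^*=\bigcap_{t}(H^p_{{\rm at},\D^t}(\mu))^*=\bigcap_{t}\Lambda_2^{\D^t}(\alpha_p)$, is exactly the chain displayed in the remark immediately following Theorem \ref{Lips} in the paper --- but there it is used in the opposite direction: Theorem \ref{Hardy} and Theorem \ref{Lips} are the inputs, and the Coifman--Weiss duality $(H^p_{\rm at}(\mu))^*=\L_{\alpha_p}(\mu)$ (the $0<p<1$ part of Theorem \ref{dualhp}) is the output. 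The paper states explicitly at the end of Section \ref{case3} that the proofs of Theorems \ref{thm2.15} and \ref{dualhp} in \cite{Co} are very technical and that the martingale machinery of Section \ref{section6} is being built precisely to give simpler proofs of them; for that program Theorem \ref{Lips} must be proved \emph{without} invoking Theorem \ref{dualhp}. Your proof can only be salvaged by importing the original Coifman--Weiss argument as a black box, which makes the statement formally true but inverts the intended dependency, renders the paper's subsequent remark vacuous, and rests everything on the very proof the theorem is meant to replace. (Two of your ingredients are unobjectionable: the dyadic duality $(H^p_{{\rm at},\D^t}(\mu))^*=\Lambda_2^{\D^t}(\alpha_p)$ is a martingale fact proved as in \cite{WF1}, independent of \cite{Co}, and your lemma on duals of finite sums is correct as stated, including the consistency-of-pairings point. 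Note also that the paper does use your style of argument, but only at $p=1$, where it is content to \emph{recover} the known decomposition \eqref{bmo} of \cite{Hy} from the known duality; at $p<1$ the logical roles are reversed.)

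The irony is that the ``less efficient'' duality-free argument you sketch and discard in your last paragraph is precisely the paper's proof, and it is shorter than you fear because the hard step is already available. The inclusion $\L_{\alpha_p}(\mu)\subset\bigcap_{t}\Lambda_2^{\D^t}(\alpha_p)$ follows at once from Theorem \ref{dyasys}: each cube $Q\in\D^t$ lies in a ball $B$ with $\mu(B)\lesssim\mu(Q)$, so the pointwise Lipschitz bound dominates the $L^2$ mean oscillation over $Q$. For the reverse inclusion no new Campanato-type telescoping is required: Theorem \ref{thm4.5}, applied on each cube $Q\in\D^t$ after normalizing $(Q,\mu/\mu(Q))$ to a probability space with the filtration of dyadic subcubes, upgrades the mean-oscillation bound to the pointwise estimate $|f(x)-f_Q|\lesssim\mu(Q)^{\alpha_p}\|f\|_{\Lambda_2^{\D^t}(\alpha_p)}$ for a.e.\ $x\in Q$, hence $|f(x)-f(y)|\lesssim\mu(Q)^{\alpha_p}\|f\|_{\Lambda_2^{\D^t}(\alpha_p)}$ for $x,y\in Q$; then Theorem \ref{kdya} traps an arbitrary ball $B$ inside a cube $Q\in\D^t$ with $\mu(Q)\lesssim\mu(B)$, giving $\|f\|_{\L_{\alpha_p}}\lesssim\sum_{t}\|f\|_{\Lambda_2^{\D^t}(\alpha_p)}$. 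Since Theorem \ref{thm4.5} is proved via Weisz's martingale duality rather than via \cite{Co}, this route has no circularity, and it is the one you should carry out.
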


\begin{proof} By Theorem \ref{dyasys}, for any $ Q \in \D^t $ (and $ t = 1,2,\cdots,K $), there exists a ball $B$ such that $ Q\subset B$ and $\mu(B)\lesssim \mu(Q)$.
If $ f \in \L_{\alpha_p}(\mu) $, then for any $x, y\in Q$, we have
$$ |f(x) - f(y)| \leq \| f \|_{\L_{\alpha_p}(\mu)} \mu(B)^{\alpha_p}\lesssim \| f \|_{\L_{\alpha_p}(\mu)} \mu(Q)^{\alpha_p}. $$
We thus have
\begin{align*}\label{lamd2}
\| f \|_{\Lambda_2^{\D^t}(\alpha_p)} &\leq \sup_{Q\in \D^t} \sk{ \mu(Q) }^{-\frac12-\alpha_p}\sk{ \mu(Q)^{-2}\int_Q \left( \int_Q |f(x) - f(y)| d\mu(y) \right)^2 d\mu(x) }^\frac12  \\
 & \leqslant\sup_{Q\in \D^t} \sk{ \mu(Q) }^{-\frac12-\alpha_p}\sk{ \int_Q \| f \|^2_{\L_{\alpha_p}(\mu)} \mu\sk{ Q }^{2\alpha_p} d\mu }^\frac12  \\
& \lesssim \| f \|_{\L_{\alpha_p}(\mu)},
\end{align*}
which yields
\begin{equation}\label{LIP1}
\L_{\alpha_p}(\mu) \subset \bigcap_{t=1}^{K} \Lambda_2^{\D^t}(\alpha_p) .
\end{equation}

Conversely, let $f\in  \bigcap\limits_{t=1}^{K} \Lambda_2^{\D^t}(\alpha_p) .$ For $Q\in \D^t$, by Theorem \ref{thm4.5}, 
$$ | f(x) - f_Q | \lesssim \mu(Q)^{\alpha_p}\| f \|_{\Lambda_2^{\D^t}(\alpha_p)} \ \ \ \forall x\in Q, $$
which implies that for any $x, y\in Q$,
\begin{equation}\label{hcm}
| f(x) - f(y) | \lesssim \mu(Q)^{\alpha_p}\| f \|_{\Lambda_2^{\D^t}(\alpha_p)}.
\end{equation}
	
For any ball $B\subset \Omega$, by Theorem \ref{kdya}, there exist $ t $ and $ Q \in \D^t $ such that $ B \subset Q$ and $  \mu(Q) \lesssim \mu(B).$ Then for any $x, y\in B$, by \eqref{hcm}
$$ | f(x) - f(y) | \lesssim\mu(B)^{\alpha_p}\| f \|_{\Lambda_2^{\D^t}(\alpha_p)}.   $$
Thus
\begin{equation*}
\| f \|_{\L_{\alpha_p}} \lesssim \sum_{t=1}^{K} \| f \|_{\Lambda_2^{\D^t}(\alpha_p)},
\end{equation*}
which implies
\begin{equation}\label{LIP2}
\bigcap_{t=1}^{K} \Lambda_2^{\D^t}(\alpha_p) \subset \L_{\alpha_p}(\mu).
\end{equation}
	
The theorem follows from \eqref{LIP1} and \eqref{LIP2}.
\end{proof}

\begin{remark}
Theorem \ref{Hardy} and Theorem \ref{Lips} give a simple proof of Theorem \ref{dualhp} originally established by Coifman and Weiss \cite{Co}:
$$ (H^p_{\rm at}(\mu))^* = \sk{ \sum_{t=1}^{K} H^p_{\rm at,\D^t}(\mu)}^* = \bigcap_{t=1}^{K}(H^p_{\rm at,\D^t}(\mu))^*  = \bigcap_{t=1}^{K} \Lambda_2^{\D^t}(\alpha_p) = \L_{\alpha_p}(\mu). $$
\end{remark}

\section{Bilinear decompositions for dyadic martingales on homogeneous spaces}\label{section7}

In this section, we focus on bilinear decompositions arising in the study of products between elements in spaces of dyadic martingales on homogeneous spaces introduced in the previous section. In the setting of homogeneous spaces, due to their quasi-metrics and measures, the dyadic martingales behave worse than martingales in probability spaces and the underlying analysis is more intricate.

In \S  \ref{gHI_hs} we prove appropriate generalized H\"{o}lder-type inequalities on homogeneous spaces (see Lemmas \ref{Holder1} and \ref{Holder2} below). We then introduce a class of pointwise multipliers of $ \Lambda_{1,+}^\D(\alpha_p)$ and $BMO^\D(\mu)$; see Theorem \ref{thm7.4} below. Using Theorem \ref{thm7.4}, we define products between dyadic martingale Hardy spaces on homogeneous spaces and their duals and then, in \S \ref{bd_hs} we establish analogues of the results of Sections \ref{case1} and \ref{case2} in the setting of homogeneous spaces.

\subsection{A generalized H\"{o}lder-type inequality}\label{gHI_hs} Let $0<p\leq 1$ and $\D$ be a dyadic system, constructed as in Theorem \ref{dyasys}. The martingale Musielak--Orlicz Hardy spaces $H^{\Psi_p}_\D(\mu)$ consist of all measurable functions $f$ on $(\Omega, \F, \mu)$ such that $s(f)\in L^{\Psi_p}(\Omega)$ where $ O \in \Omega $ is a fixed point, and
\begin{align*}
\Psi_1 (x,t)& := \dfrac{t}{\log\sk{e+d(x,O)} + \log(e+t) }, \\
 \Psi_p(x,t)& := \frac{t}{ 1+\lk{ t[1 + \mu(B(O,d(x,O)))] }^{1-p} } \quad (0<p<1).
\end{align*}
Note that $L^{\Psi_p}(\Omega)$ is a quasi-normed space.

Let $M : = (C_\mu+1)\log \sk {e+d(x,O)} $. By \eqref{Mst} we obtain
\begin{equation}\label{Psi}
\Psi_1(x,st) \lesssim (e+d(x,O))^{-(C_\mu+1)} e^t + s \lesssim w(x) e^t + s, \quad \text{for all}\ x\in \Omega, s,t>0,
\end{equation}
where $ w: \Omega\to \real_+ $ is a weight function with
\begin{equation}\label{weight}
w(x) \lesssim \min\lk{ 1, d(x,O)^{-(C_\mu+1)} }.
\end{equation}

Let $Q^0 \in \F_0$ be the dyadic cube such that $ O \in Q^0 $. For $g \in BMO^\D(\mu)$, define
$$
\| g \|_{BMO^\D_+(\mu)} := \sup\limits_{\alpha \in \A_0} \frac{|g_{Q_{\alpha}^0}-g_{Q^0}|}{\log\sk{e+d(z_{\alpha}^0,O)}}+|g_{Q^0}| + \| g \|_{BMO^\D(\mu)},
$$
where $Q_{\alpha}^0 \in \F_0$ is a dyadic cube with its center $z_{\alpha}^0$ and $\A_0$ is the index set in Theorem \ref{dyasys}. Denote by $BMO^\D_+(\mu)$ the space consisting of all $g\in BMO^\D(\mu)$ such that $\| g \|_{BMO^\D_+(\mu)} <\8$. It is not difficult to verify that $\| \cdot \|_{BMO^\D_+(\mu)}$ is a norm on $BMO^\D_+(\mu)$.

\begin{rem} If we consider the dyadic martingales on $\real^n$, by taking appropriate cubes $Q^0$ one shows that if $g\in BMO^\D(\mu)$, then $g\in BMO^\D_+(\mu)$. Note that if $g\in BMO(\mu)$, then $g\in BMO^\D_+(\mu)$. Moreover,
$$ \| g \|_{BMO^\D_+(\mu)}\lesssim \|g\|_{BMO(\mu)}+|g_{Q^0}|. $$
\end{rem}

We now introduce the following generalized H\"{o}lder inequality for $L^1(\Omega, \F, \mu)$ and $BMO^\D_+(\mu)$.

\begin{lem}\label{Holder1}
If $f \in L^1(\Omega,\F,\mu)$ and $ g \in BMO^\D_+(\mu) $, then $f\cdot g\in L^{\Psi_1}(\Omega)$. Moreover,
\begin{equation}\label{Holder}
\| fg \|_{L^{\Psi_1}(\Omega)} \lesssim \| f \|_{1}\| g \|_{BMO^\D_+(\mu)}.
\end{equation}
\end{lem}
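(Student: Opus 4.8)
The plan is to follow the template of Lemma~\ref{HolderP}, replacing the probabilistic John--Nirenberg inequality by its dyadic counterpart on the doubling space and exploiting the auxiliary inequality \eqref{Psi} together with the decay of the weight $w$ recorded in \eqref{weight}. I would first dispose of the trivial cases $\|f\|_1 = 0$ and $\|g\|_{BMO^\D_+(\mu)} = 0$ (in the latter case all three quantities defining $\|\cdot\|_{BMO^\D_+(\mu)}$ vanish, forcing $g = 0$ $\mu$-a.e.). Assuming both quantities strictly positive, the key idea is to split $g$ across the level-$0$ $\sigma$-algebra as $g = (g - g_0) + g_0$, where $g_0 = \ce_0(g) = \sum_{\alpha} g_{Q_\alpha^0}\, 1_{Q_\alpha^0}$, and to estimate $f g_0$ and $f(g-g_0)$ separately; since $\Psi_1$ has uniformly lower type $p<1$ and upper type $1$, the quasi-triangle inequality for $\|\cdot\|_{L^{\Psi_1}(\Omega)}$ then recombines the two bounds.

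For the term $f g_0$ I would first record the pointwise bound $|g_0(x)| \lesssim \log(e + d(x,O))\,\|g\|_{BMO^\D_+(\mu)}$. Indeed, on $Q_\alpha^0$ one has $|g_0| = |g_{Q_\alpha^0}| \le |g_{Q_\alpha^0} - g_{Q^0}| + |g_{Q^0}|$, and the two summands are controlled by $\|g\|_{BMO^\D_+(\mu)}$ times, respectively, $\log(e + d(z_\alpha^0, O))$ and $1$; the quasi-triangle inequality \eqref{metric} together with $\mathrm{diam}(Q_\alpha^0) \lesssim 1$ gives $\log(e + d(z_\alpha^0,O)) \lesssim \log(e + d(x,O))$ for $x \in Q_\alpha^0$. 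Discarding the term $\log(e+d(x,O))$ in the denominator of $\Psi_1$ then yields $\Psi_1(x, |f g_0|/\lambda) \le |f g_0|/(\lambda \log(e + d(x,O))) \lesssim |f|\,\|g\|_{BMO^\D_+(\mu)}/\lambda$, so that choosing $\lambda \approx \|f\|_1 \|g\|_{BMO^\D_+(\mu)}$ makes the $\Psi_1$-integral at most $1$, and hence $\|f g_0\|_{L^{\Psi_1}(\Omega)} \lesssim \|f\|_1\|g\|_{BMO^\D_+(\mu)}$.

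For the term $f(g - g_0)$ I would apply \eqref{Psi} with $s = |f|/\|f\|_1$ and $t = c\,|g - g_0|/\|g\|_{BMO^\D_+(\mu)}$ for a small absolute constant $c$, which (using $\|g\|_{BMO^\D(\mu)} \le \|g\|_{BMO^\D_+(\mu)}$ and $\int s\, d\mu = 1$) reduces matters to the integrability estimate $\int_\Omega w(x)\, e^{c|g - g_0|/\|g\|_{BMO^\D(\mu)}} \, d\mu \lesssim 1$. To obtain it I would decompose $\Omega = \bigsqcup_\alpha Q_\alpha^0$: on each $Q_\alpha^0$ the function $g - g_{Q_\alpha^0}$ has vanishing average, and renormalising $\mu$ to the probability measure $\mu/\mu(Q_\alpha^0)$ turns it into a regular martingale $BMO$ function (as in the construction in the proof of Theorem~\ref{thm4.5}), so Theorem~\ref{MJN} furnishes $\mu(Q_\alpha^0)^{-1}\int_{Q_\alpha^0} e^{c|g - g_{Q_\alpha^0}|/\|g\|_{BMO^\D(\mu)}}\,d\mu \lesssim 1$ for $c$ small. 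Since $w$ is comparable to the constant $\min\{1, d(z_\alpha^0,O)^{-(C_\mu+1)}\}$ on $Q_\alpha^0$, summing these local estimates bounds the integral by $\sum_\alpha (\sup_{Q_\alpha^0} w)\,\mu(Q_\alpha^0) \lesssim \int_\Omega w\, d\mu$, which is finite by \eqref{weight} and the doubling property: a dyadic-annulus decomposition gives $\int_{d(x,O) > 1} d(x,O)^{-(C_\mu+1)}\,d\mu \lesssim \sum_{j \ge 0} 2^{-j(C_\mu+1)}\mu(B(O,2^{j+1})) \lesssim \sum_{j \ge 0} 2^{-j} < \infty$.

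I expect the main obstacle to be precisely this last integrability step on the possibly infinite-measure space $\Omega$. It is here that the exact decay rate $d(x,O)^{-(C_\mu+1)}$ of $w$ in \eqref{weight} is indispensable to defeat the at-most-polynomial growth $\mu(B(O,R)) \lesssim R^{C_\mu}$ of the doubling measure, and here that the passage to the renormalised probability spaces $(Q_\alpha^0, \mu/\mu(Q_\alpha^0))$ is needed in order to legitimately invoke the martingale John--Nirenberg inequality in a setting where the ambient measure is not a probability measure. Once this integrability is secured, the remaining manipulations are routine applications of \eqref{Psi}, the scaling relation \eqref{Psic}, and the quasi-norm properties of $L^{\Psi_1}(\Omega)$.
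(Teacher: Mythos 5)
Your proof is correct, and it reorganizes the paper's ingredients along a genuinely different decomposition, so a comparison is worthwhile. The paper estimates the single integral $\int_\Omega \Psi_1(x,|fg|)\,d\mu$ directly, decomposing $\Omega$ into the annuli $S_k = B(O,C_0\delta^k)\setminus B(O,C_0\delta^{k+1})$ intersected with the level-$0$ cubes, applying \eqref{Psi} to the whole product, and splitting \emph{multiplicatively inside the exponential}, $e^{\nu|g|}\leqslant e^{\nu|g-g_{Q^0_\alpha}|}\,e^{\nu|g_{Q^0_\alpha}|}$; the second factor is bounded by $(e+d(z^0_\alpha,O))^{1/2}$ (this is where the choice $\nu\leqslant\tfrac12$ enters), so the weight's decay $\delta^{-(k+1)(C_\mu+1)}$ must simultaneously defeat the doubling growth $\sum_{\alpha\in\B_k}\mu(Q^0_\alpha)\lesssim\delta^{C_\mu k}$ \emph{and} this extra half power. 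You instead split $g=g_0+(g-g_0)$ additively: the piece $fg_0$ is killed by a purely pointwise estimate, since the logarithmic growth $|g_0(x)|\lesssim \log(e+d(x,O))\,\|g\|_{BMO^\D_+(\mu)}$ is exactly cancelled by the $\log(e+d(x,O))$ in the denominator of $\Psi_1$ --- no exponential integrability and no weight are needed for that term, which makes transparent why the normalization $\sup_\alpha |g_{Q^0_\alpha}-g_{Q^0}|/\log(e+d(z^0_\alpha,O))$ in the definition of $\|\cdot\|_{BMO^\D_+(\mu)}$ is the right one. For $f(g-g_0)$ your weight then only has to beat the doubling growth, which you isolate as the clean standalone fact $\int_\Omega w\,d\mu<\infty$; the modest price is an extra appeal to the quasi-triangle inequality and the lower-type scaling \eqref{Psic} to recombine the two pieces. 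Both arguments hinge on the same key step, which you in fact justify more carefully than the paper: the uniform exponential integrability of $g-g_{Q^0_\alpha}$ on each $Q^0_\alpha$ via Theorem \ref{MJN} applied on the renormalized probability spaces $(Q^0_\alpha,\F^{Q^0_\alpha},\mu/\mu(Q^0_\alpha))$, exactly as in the proof of Theorem \ref{thm4.5} (the paper's citation at this point is a mislabel and should point to Theorem \ref{MJN}). One step that both you and the paper leave implicit is the comparison of the $L^1$-oscillation norm $\|g\|_{BMO^\D(\mu)}$ with the $L^2$-based martingale $BMO$ norm of the localized martingale, which is what Theorem \ref{MJN} formally requires; this follows from the regularity of the dyadic filtration (a consequence of doubling and Theorem \ref{dyasys}) together with the dyadic John--Nirenberg inequality, and is standard, so it does not constitute a gap in either argument.
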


\begin{proof} Without loss of generality, assume $\|f\|_1\leq 1$, $\| g \|_{BMO^\D_+(\mu)}\leq 1$ and $ g_{Q^0} = 0 $. It suffices to show that
$$ \int_{\Omega} \Psi_1(x,|f(x)g(x)|) d\mu \lesssim 1. $$
	
Let $ S_k := B(O,C_0\delta^k)\setminus B(O,C_0\delta^{k+1}) $ for $k<0$ and $ S_0 := B(O,C_0) $, where $\delta \in (0,1)$ is the constant in Theorem \ref{dyasys}. Then for each $k \leq 0$, there exists a finite index subset $\B_k \subset \A_0$ such that $ B(O,C_0\delta^k) \subset \bigcup\limits_{\alpha \in \B_k}Q_{\alpha}^0 $ (where $Q^0_\alpha \in \F_0$) and 
$$ \sum\limits_{\alpha \in \B_k} \mu\sk{Q_{\alpha}^0}=\mu\sk{\bigcup\limits_{\alpha \in \B_k}Q_{\alpha}^0}  \leq  \mu\sk{B(O, 2A_0C_0\delta^{k})} \lesssim \delta^{C_\mu k} .$$
	
Take $ s= \nu^{-1}|f(x)|,t=\nu|g(x)| $ in (\ref{Psi}), one has
\begin{align*} 
\int_{\Omega} \Psi_1(x,|f(x)g(x)|) d\mu & = \sum_{k=-\8}^{0}\sum\limits_{\alpha \in \B_k} \int_{S_k\cap Q^0_{\alpha}} \Psi_1(x,|f(x)g(x)|) d\mu  
 \\
 & \lesssim \sum_{k=-\8}^{0}\sum\limits_{\alpha \in \B_k} \int_{S_k\cap Q^0_{\alpha}} w(x)e^{\nu|g(x)|}d\mu + \nu^{-1} \| f \|_1. 
\end{align*}
Therefore,
\begin{equation}\label{fg}
\int_{\Omega} \Psi_1(x,|f(x)g(x)|) d\mu \lesssim T_1 + \nu^{-1} \| f \|_1, 
\end{equation}
where
$$ T_1:= \sum_{k=-\8}^{0}\sum\limits_{\alpha \in \B_k} \int_{S_k\cap Q^0_{\alpha}} w(x)e^{\nu\jdz{g(x)-g_{Q^0_{\alpha}}}} e^{\nu\jdz{g_{Q^0_{\alpha}}}} d\mu .  $$
	
Let $ \nu := \frac{\min\{\kappa, 1\} }{2} >0 $ (where $\kappa$ is defined in Theorem \ref{MJN}), by \eqref{weight} and Theorem \ref{JN}, one has
\begin{align*} 
T_1 & \lesssim \sum_{k=-\8}^{0}\sum\limits_{\alpha \in \B_k} \dfrac{ \mu(Q^0_{\alpha}) \sk{e+d(z^0_{\alpha},O)}^{\frac{1}{2}} }{\delta^{(k+1)(C_\mu+1)}} \\
& \lesssim \sum_{k=-\8}^{0}\sum\limits_{\alpha \in \B_k} \dfrac{ \mu(Q^0_{\alpha}) \delta^{\frac{k}{2}} }{\delta^{(k+1)(C_\mu+1)}} \lesssim \sum_{k=-\8}^{0} \dfrac{\delta^{C_\mu k}\delta^{\frac{k}{2}}}{ \delta^{C_\mu k + k} } \\
& \lesssim \sum_{k=-\8}^{0} \delta^{-\frac12 k}, 
\end{align*}
and hence	
\begin{equation}\label{fg2}	
T_1 \lesssim 1.	
\end{equation}

Combine (\ref{fg}), (\ref{fg2}) and the fact that  $ \nu^{-1} \| f \|_1 \lesssim 1$, and the proof is complete.
\end{proof}

We consider the case $0<p<1$. Define
$$ \| g \|_{\Lambda_{1,+}^\D(\alpha_p)} := \sup\limits_{\alpha \in \A_0} \frac{|g_{Q_{\alpha}^0}-g_{Q^0}|}{1+\mu\lk{B\sk{O,d(z^0_\alpha,O)}}^{\alpha_p}}+|g_{Q^0}| + \| g \|_{\Lambda_1^\D(\alpha_p)}, $$
Denote by $\Lambda_{1,+}^\D(\alpha_p)$ the space consisting of all $g\in \Lambda_{1}^\D(\alpha_p)$ such that $\| g \|_{\Lambda_{1,+}^\D(\alpha_p)} <\8$. It is easy to verify that $\| \cdot \|_{\Lambda_{1,+}^\D(\alpha_p)}$ is a norm on $\Lambda_{1,+}^\D(\alpha_p)$.

\begin{rem} If we consider the dyadic martingales on $\real^n$, by taking appropriate cubes $Q^0$ one can show that if  $g\in\Lambda_{1}^\D(\alpha_p)$, then $g\in\Lambda_{1,+}^\D(\alpha_p)$.	Note that if $g\in \L_{\alpha_p}(\mu)$, then $g\in \Lambda_{1,+}^\D(\alpha_p)$. Moreover,
$$ \| g \|_{\Lambda_{1,+}^\D(\alpha_p)} \lesssim \|g\|_{\L_{\alpha_p}(\mu)}+|g_{Q^0}|.   $$
\end{rem}

Next we present a generalized H\"{o}lder inequality for $L^p(\Omega, \F, \mu)$ and $\Lambda_{1,+}^\D(\alpha_p)$ for $0<p<1$.
\begin{lemma}\label{Holder2}
If $f \in L^p(\Omega,\F, \mu)$ and $ g \in \Lambda_{1,+}^{\D}(\alpha_p) $ for $0<p<1$, then  $f\cdot g \in L^{\Psi_p}(\mu)$. Moreover,
\begin{equation}
\| fg \|_{L^{\Psi_p}(\Omega)} \lesssim \| f \|_p\| g \|_{\Lambda_{1,+}^{\D}(\alpha_p)}.
\end{equation}
\end{lemma}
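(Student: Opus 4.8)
The plan is to reduce everything to a single pointwise bound on $g$ together with a one-line majorization of the Musielak--Orlicz function. Writing $A(x) := 1 + \mu(B(O,d(x,O)))$, the first observation is the elementary estimate
$$ \Psi_p(x,t) = \frac{t}{1 + (tA(x))^{1-p}} \leqslant \frac{t}{(tA(x))^{1-p}} = t^p A(x)^{p-1}, \qquad t\geqslant 0, $$
obtained by simply discarding the $1$ in the denominator. The point of this bound is that it is genuinely $p$-homogeneous in $t$, so it will handle the scaling in the Luxembourg quasi-norm automatically.

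The heart of the argument is the pointwise estimate $|g(x)| \lesssim_p A(x)^{\alpha_p}\,\| g \|_{\Lambda_{1,+}^{\D}(\alpha_p)}$ for a.e. $x\in\Omega$. To prove it, I would fix $x$, let $Q^0_\alpha\in\F_0$ be the level-$0$ dyadic cube containing $x$, with center $z^0_\alpha$, and split
$$ |g(x)| \leqslant |g(x)-g_{Q^0_\alpha}| + |g_{Q^0_\alpha}-g_{Q^0}| + |g_{Q^0}|. $$
The first term is controlled by the Lipschitz characterization (the analogue of Theorem \ref{thm4.5} already invoked in the proof of Theorem \ref{Lips}), which gives $|g(x)-g_{Q^0_\alpha}| \lesssim_p \mu(Q^0_\alpha)^{\alpha_p}\|g\|_{\Lambda_1^\D(\alpha_p)}$; the second term is exactly the quantity appearing in the definition of $\|\cdot\|_{\Lambda_{1,+}^\D(\alpha_p)}$, hence is $\lesssim [1+\mu(B(O,d(z^0_\alpha,O)))^{\alpha_p}]\,\|g\|_{\Lambda_{1,+}^\D(\alpha_p)}$; and the third is simply $\leqslant \|g\|_{\Lambda_{1,+}^\D(\alpha_p)}$. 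Note that, because the last term is built into the norm, no normalization $g_{Q^0}=0$ is required.

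It then remains to convert the two measures $\mu(Q^0_\alpha)$ and $\mu(B(O,d(z^0_\alpha,O)))$ into $A(x)$. Since $x\in Q^0_\alpha\subset B(z^0_\alpha,C_1)$ one has $d(x,z^0_\alpha)<C_1$, and a routine application of the quasi-triangle inequality \eqref{metric} together with the doubling property \eqref{double} yields $\mu(Q^0_\alpha)\lesssim A(x)$ and $\mu(B(O,d(z^0_\alpha,O)))\lesssim A(x)$, the implied constants depending only on $A_0$, $C_\mu$, $\delta$ and the fixed data $O,\D$. Here one distinguishes the cases $d(x,O)$ large and $d(x,O)$ bounded, the ``$1+$'' in $A(x)$ absorbing the bounded case. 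Since $\alpha_p>0$, the displayed pointwise bound on $g$ follows.

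Finally, I would combine the two ingredients. Because $p\alpha_p = p(\tfrac1p-1)=1-p$, the powers of $A(x)$ cancel exactly:
$$ |g(x)|^p A(x)^{p-1} \lesssim_p \|g\|_{\Lambda_{1,+}^\D(\alpha_p)}^p\, A(x)^{p\alpha_p+p-1} = \|g\|_{\Lambda_{1,+}^\D(\alpha_p)}^p. $$
Hence for any $\lambda>0$,
$$ \int_\Omega \Psi_p\big(x,\lambda^{-1}|f(x)g(x)|\big)\,d\mu \leqslant \lambda^{-p}\int_\Omega |f|^p|g|^p A(x)^{p-1}\,d\mu \lesssim_p \lambda^{-p}\,\|g\|_{\Lambda_{1,+}^\D(\alpha_p)}^p\,\|f\|_p^p, $$
and choosing $\lambda$ a fixed multiple of $\|f\|_p\|g\|_{\Lambda_{1,+}^\D(\alpha_p)}$ makes the right-hand side at most $1$, which by definition of the Luxembourg quasi-norm gives $\|fg\|_{L^{\Psi_p}(\Omega)}\lesssim \|f\|_p\|g\|_{\Lambda_{1,+}^\D(\alpha_p)}$. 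The main obstacle is the geometric measure comparison in the third paragraph: unlike the probability-space setting, here the reference point $O$ and the unbounded growth of $g$ interact with the doubling measure, and one must check that dyadic cubes and metric balls are comparable uniformly in $\alpha$; once this is in place the rest is bookkeeping, driven by the clean cancellation $p\alpha_p+p-1=0$.
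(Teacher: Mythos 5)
Your proof is correct, and it takes a genuinely more streamlined route than the paper's. Both arguments rest on the same two inputs---the dyadic analogue of Theorem \ref{thm4.5} to control $|g(x)-g_{Q^0_\alpha}|$ by $\mu(Q^0_\alpha)^{\alpha_p}\|g\|_{\Lambda_1^{\D}(\alpha_p)}$, and the extra term in the $\Lambda_{1,+}^{\D}(\alpha_p)$-norm to control $|g_{Q^0_\alpha}-g_{Q^0}|$---combined with the exponent identity $p\alpha_p=1-p$ and doubling. The difference is structural: the paper normalizes $g_{Q^0}=0$, decomposes $\Omega$ into the annuli $S_k=B(O,C_0\delta^k)\setminus B(O,C_0\delta^{k+1})$ covered by finitely many level-$0$ cubes, bounds $g$ scale-by-scale by $\mu(B(O,2A_0C_0\delta^k))^{\alpha_p}+1$, and exploits the cancellation through a ratio of measures of balls of comparable radii $2A_0C_0\delta^k$ and $C_0\delta^{k+1}$. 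You instead prove one pointwise estimate $|g(x)|\lesssim_p A(x)^{\alpha_p}\|g\|_{\Lambda_{1,+}^{\D}(\alpha_p)}$ with $A(x):=1+\mu(B(O,d(x,O)))$, majorize $\Psi_p(x,t)\leqslant t^pA(x)^{p-1}$ by discarding the $1$, and let the cancellation $p\alpha_p+p-1=0$ happen pointwise, so neither annuli nor the normalization $g_{Q^0}=0$ are needed; your geometric comparison $\mu(Q^0_\alpha)\lesssim A(x)$ and $\mu(B(O,d(z^0_\alpha,O)))\lesssim A(x)$ is a routine quasi-triangle/doubling check and is sound, including the absorption of the bounded-distance case into the ``$1+$''. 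Your version is shorter and makes transparent that for $0<p<1$ the scale decomposition is superfluous; what the paper's scheme buys is uniformity with its proof of Lemma \ref{Holder1} (the case $p=1$), where the exponential John--Nirenberg bound genuinely forces a decomposition into scales and a weight, so your crude majorization has no analogue there. One point to make explicit in a write-up: the Theorem \ref{thm4.5}-type bound in the present $\sigma$-finite setting is justified by restricting to a dyadic cube (which has finite measure) and normalizing to a probability space with a regular filtration---the same implicit justification the paper uses in the proofs of Theorem \ref{Lips} and of this lemma.
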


\begin{proof}
Without loss of generality, assume $\|f\|_p\leq 1$, $\| g \|_{\Lambda^\D_{1,+}(\alpha_p)}\leq 1$ and $ g_{Q^0} = 0 $. It suffices to show that
$$ \int_{\Omega} \Psi_p(x,|f(x)g(x)|) d\mu \lesssim  1. $$
	
Take the same family of sets $\{ S_k \}_{k\leq 0} $ as above. From Theorem \ref{thm4.5}, we know that for $ x\in Q^0_{\alpha} $,	
\begin{align*}
|g(x)| &= | g(x) - g_{Q^0} | \leqslant \jdz{g(x)-g_{Q^0_{\alpha}} } +\jdz{ g_{Q_{\alpha}^0}-g_{Q^0}}\notag\\
& \leq  \sk{ \mu( Q^0_{\alpha} ) }^{\alpha_p} + \mu\lk{B\sk{O,d(z^0_\alpha,O)}}^{\alpha_p}+1\\
&\lesssim   \mu\sk{B(O,2A_0C_0\delta^{k})}^{\alpha_p}+1 .
\end{align*}
Therefore
\begin{align*}
\int_{\Omega} \Psi_p(x,|f(x)g(x)|) d\mu & = \sum_{k=-\8}^{0} \sum\limits_{\alpha \in \B_k} \int_{S_k\cap Q^0_{\alpha}}  \frac{ |g(x) | |f(x)|}{ 1+ \lk{|g(x) | |f(x)|\mk{1+ \mu\sk{B(O,d(x,O)} } }^{1-p}} d\mu \notag\\
& \lesssim \sum_{k=-\8}^{0} \sum\limits_{\alpha \in \B_k} \int_{S_k\cap Q^0_{\alpha}}  \frac{ |g(x) |^p |f(x)|^p}{ 1+ \mu\sk{B(O,d(x,O) }^{1-p}}  d\mu \\
 & \lesssim  \sum_{k=-\8}^{0} \sum\limits_{\alpha \in \B_k} \int_{S_k\cap Q^0_{\alpha}}  \frac{  \mu\sk{B(O,2A_0C_0\delta^{k})}^{\alpha_p p}+1  }{ \lk{ 1 + \mu\sk{B(O,C_0\delta^{k+1}} }^{1-p} }|f(x)|^p d\mu   \\
& \lesssim 1,
\end{align*}
which finishes the proof.
\end{proof}

We are now about to present the analogues of the results in Sections \ref{case1} and \ref{case2} concerning bilinear decompositions for dyadic martingales on homogeneous spaces. To this end, we need to define the product between martingale Hardy spaces and their dual spaces first. As in the probability setting, we regard the product in the sense of distribution as follows: for $0<p<1$,
$$
\la f \times g, h\ra := \la h\cdot g, f\ra,\quad f \in H^p_{\rm{at}, \D}(\mu),\  g\in \Lambda_{1,+}^\D(\alpha_p),
$$
where $ h $ is a test function such that $ h\cdot g $ is in $ \Lambda_{1,+}^\D(\alpha_p) $. For $p=1$, we may define the product between $H^1_{\rm{at}, \D}(\mu)$ and $BMO^\D(\mu)$ analogously. To this end, we need to introduce some pointwise multipliers of $ \Lambda_{1,+}^\D(\alpha_p)$ and $BMO^\D(\mu)$.

Denote the space of test functions by $ \H(\alpha_p) $ $(0<p\leq 1)$, and a measurable function $ h $ is a test function if it satisfies the following properties:
\begin{equation}
| h(x) | \lesssim \frac{1}{ \sk{ 1+\mu (B(O,d(x, O)))^{\alpha_p} } \log(e+ d(x, O) ) }, \quad \forall x\in \Omega,
\end{equation}
and
\begin{equation}
|h(y)-h(z)| \lesssim \frac{\mu( B )^{\alpha_p}}{ \sk{1+ \mu[B(O,1+r+d(c_B, O))]^{\alpha_p} } \log(e+r+d(c_B, O))  }
\end{equation}
whenever $ y,z $ are both contained in a ball $ B $  with center $ c_B $ and radius $ r \leq \frac{d(c_B, O)}{2A_0}+1 $.

It is obvious that $ \H(\alpha_p) \subset L^{\infty}(\Omega) $. The following theorem shows that if  $h\in \H(\alpha_p)$, then $h$ is a pointwise multiplier of $ \Lambda_{1,+}^\D(\alpha_p)$.

\begin{thm}\label{thm7.4}
For $ 0<p<1 $ and any dyadic system $\D$, $ \H(\alpha_p) $ is a space of pointwise multipliers of $ \Lambda_{1,+}^\D(\alpha_p) $. For $ p=1 $, $ \H(0) $ is a space of pointwise multipliers of $ BMO^\D_+(\mu) $. More precisely, for any $ g \in \Lambda_{1,+}^{\D}(\alpha_p) $ and $ h \in \H(\alpha_p) $, we have
$$ \| g\cdot h \|_{\Lambda_{1,+}^{\D}(\alpha_p)} \lesssim \| g \|_{\Lambda_{1,+}^{\D}(\alpha_p)} \sk{ \| h \|_{L^{\infty}(\Omega)}+1},$$
and for any $ g \in BMO^\D_+(\mu) $ and $ h \in \H(0) $, we have
$$ \| g\cdot h \|_{BMO^\D_+(\mu)} \lesssim \| g \|_{BMO^\D_+(\mu)} \sk {\| h \|_{L^{\infty}(\Omega)}+1}. $$
\end{thm}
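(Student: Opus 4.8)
The plan is to treat the two cases $0<p<1$ and $p=1$ in parallel, since they differ only in the relevant weight (a power $\mu(B(O,\cdot))^{\alpha_p}$ for the Lipschitz scale, a logarithm $\log(e+d(\cdot,O))$ for $BMO$); I describe the Lipschitz case in detail. By the very definition of $\Lambda_{1,+}^\D(\alpha_p)$, bounding $\|g\cdot h\|_{\Lambda_{1,+}^\D(\alpha_p)}$ amounts to controlling three quantities: the Lipschitz seminorm $\|gh\|_{\Lambda_1^\D(\alpha_p)}=\sup_{Q\in\D}\mu(Q)^{-1-\alpha_p}\int_Q|gh-(gh)_Q|\,d\mu$; the anchored quotients $\sup_{\alpha\in\A_0}|(gh)_{Q^0_\alpha}-(gh)_{Q^0}|/(1+\mu(B(O,d(z^0_\alpha,O)))^{\alpha_p})$; and the anchor value $|(gh)_{Q^0}|$. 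Each will be estimated with the same algebraic device.

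That device is the pointwise splitting, valid on any cube $Q$,
\[
gh-g_Q h_Q=(g-g_Q)\,h+g_Q\,(h-h_Q).
\]
Integrating $|gh-g_Qh_Q|$ over $Q$ (which dominates $\int_Q|gh-(gh)_Q|$ up to a factor $2$), the first summand is at once bounded by $\|h\|_{L^\infty(\Omega)}\int_Q|g-g_Q|\,d\mu\le\|h\|_{L^\infty(\Omega)}\|g\|_{\Lambda_1^\D(\alpha_p)}\mu(Q)^{1+\alpha_p}$. For the second summand I would use $\int_Q|h-h_Q|\,d\mu\le 2\mu(Q)\,\mathrm{osc}_Q(h)$, so the whole term is at most $2\,|g_Q|\,\mu(Q)\,\mathrm{osc}_Q(h)$; the task thereby reduces to bounding $|g_Q|$ and $\mathrm{osc}_Q(h)$.

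The crux is the bound on $|g_Q|$. Writing $c_Q$ for the centre of $Q$, I would relate $g_Q$ to the anchor through a telescoping sum along the dyadic ancestors of $Q$: when $Q$ lies below the reference level, chaining from $Q$ up to the level-$0$ cube $Q^0_\alpha\supset Q$ gives $|g_Q-g_{Q^0_\alpha}|\lesssim\|g\|_{\Lambda_1^\D(\alpha_p)}\mu(Q^0_\alpha)^{\alpha_p}$ (the telescoped terms form a geometric series dominated by the coarsest one, using the doubling property and the bounded parent/child measure ratio); combining this with the two ``$+$'' ingredients $|g_{Q^0_\alpha}-g_{Q^0}|$ and $|g_{Q^0}|$ of the norm yields
\[
|g_Q|\lesssim\|g\|_{\Lambda_{1,+}^\D(\alpha_p)}\bigl(1+\mu(B(O,d(c_Q,O)))^{\alpha_p}\bigr).
\]
On the other hand, whenever the radius of $B(Q)$ obeys the restriction $r\le d(c_Q,O)/(2A_0)+1$ built into the definition of $\H(\alpha_p)$, the regularity hypothesis on $h$ furnishes exactly the reciprocal weight, $\mathrm{osc}_Q(h)\lesssim\mu(Q)^{\alpha_p}/(1+\mu(B(O,d(c_Q,O)))^{\alpha_p})$. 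Multiplying, the two weights cancel and $2\,|g_Q|\,\mu(Q)\,\mathrm{osc}_Q(h)\lesssim\|g\|_{\Lambda_{1,+}^\D(\alpha_p)}\mu(Q)^{1+\alpha_p}$, as required for the seminorm.

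The residual family of cubes that are large relative to their distance from $O$, for which the radius restriction fails, I would handle separately: there one discards the oscillation, uses the crude size bound $\int_Q|h-h_Q|\,d\mu\le 2\|h\|_{L^\infty(\Omega)}\mu(Q)$, and bounds $|g_Q|\lesssim\|g\|_{\Lambda_{1,+}^\D(\alpha_p)}(1+\mu(Q)^{\alpha_p})$ by chaining from $Q^0$; since such cubes are coarse, $\mu(Q)^{\alpha_p}\gtrsim 1$ and the factor $(1+\mu(Q)^{\alpha_p})$ is absorbed into $\mu(Q)^{\alpha_p}$. The two anchored pieces are then treated by the same splitting at level $0$: expanding $(gh)_{Q^0_\alpha}=g_{Q^0_\alpha}h_{Q^0_\alpha}+O(\|h\|_{L^\infty(\Omega)}\|g\|_{\Lambda_1^\D(\alpha_p)}\mu(Q^0_\alpha)^{\alpha_p})$ and likewise for $Q^0$, the decay of $|h_{Q^0_\alpha}|$ supplied by the \emph{size} condition on $h$ cancels the weight in the denominator, while $|(gh)_{Q^0}|\lesssim\|h\|_{L^\infty(\Omega)}\|g\|_{\Lambda_{1,+}^\D(\alpha_p)}$ is immediate. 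The main obstacle, and the only genuinely delicate point, is the chaining estimate for $|g_Q|$ together with the verification that the weight it produces is precisely the reciprocal of the decay guaranteed by $\H(\alpha_p)$; the case $p=1$ is identical, with $\log(e+d(c_Q,O))$ replacing $\mu(B(O,d(c_Q,O)))^{\alpha_p}$ throughout, the logarithmic growth of $|g_Q|$ in $BMO^\D(\mu)$ matching the logarithmic weights of $\H(0)$ and $BMO^\D_+(\mu)$.
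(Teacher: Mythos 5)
Your proposal is correct, and its mathematical core coincides with the paper's proof: both hinge on the same telescoping/chaining estimate for $|g_Q|$ through the reference level (with the geometric-series summation), the same dichotomy according to whether the radius of the containing ball satisfies $r\leq d(c_Q,O)/(2A_0)+1$, and the same cancellation of the weight $1+\mu(B(O,d(c_Q,O)))^{\alpha_p}$ produced by the chaining against the reciprocal decay built into the definition of $\H(\alpha_p)$, with the crude bound $\mu(Q)\gtrsim 1$ handling the residual coarse cubes. The packaging differs: the paper invokes Nakai's pointwise-multiplier criterion \cite{Na} to reduce the whole statement to the single condition $\sup_{Q}|g_Q|\,\mu(Q)^{-1-\alpha_p}\int_Q|h-h_Q|\,d\mu<\infty$ and then verifies that condition, whereas you verify the multiplier property from scratch via the splitting $gh-g_Qh_Q=(g-g_Q)h+g_Q(h-h_Q)$, and in addition you explicitly estimate the anchored components $|(gh)_{Q^0_\alpha}-(gh)_{Q^0}|$ and $|(gh)_{Q^0}|$ of the $\Lambda_{1,+}^{\D}(\alpha_p)$-norm, which the paper's proof leaves implicit in the citation. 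What your route buys is self-containedness, and arguably it fills a small expository gap, since the ``$+$''-parts of the norm are not literally covered by the seminorm-type criterion of \cite{Na}; what it costs is redoing a standard computation that the citation disposes of in one line. One caveat applies equally to both arguments: the existence of a chain $Q=Q_0\subset\cdots\subset Q_N=Q^0_\beta$ with $\mu(Q_{k-1})\leq\delta'\mu(Q_k)$ for a universal $\delta'<1$ (your ``bounded parent/child measure ratio'') is asserted rather than proved in the paper as well, so relying on it does not constitute a gap specific to your proposal.
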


\begin{proof} First, we consider the case $0<p<1$. Assume that $ g \in \Lambda_{1,+}^{\D}(\alpha_p) $ and $ h \in \H(\alpha_p) $. According to \cite{Na}, it suffices to show that
\begin{equation}\label{PWM}
\sup_{Q}\frac{|g_Q|}{\mu(Q)^{\alpha_p+1}} \sk{ \int_Q |h(x)-h_Q| dx } < \infty,
\end{equation}
where $Q$ runs over all dyadic cubes in $ \D $.

If $ Q \subset Q^0_\beta $ for some $\beta\in \A_0$, there exists a collection of cubes $ Q = Q_0 \subset Q_1 \subset \cdots \subset Q_N = Q^0_\beta $ such that there exists a universal constant $0<\delta^{'}<1$ with $ \mu(Q_{k-1}) \leq \delta^{'} \mu (Q_k)$. Hence
 \begin{align*}
 |g_Q-g_{Q^0_\beta}| & \leq \sum_{k=1}^{N}| g_{Q_k} - g_{Q_{k-1}} | \lesssim \sum_{k=1}^{N} \mu(Q_k)^{\alpha_p} \| g \|_{\Lambda_{1,+}^{\D}(\alpha_p)} \\
& \lesssim \| g \|_{\Lambda_{1,+}^{\D}(\alpha_p)} \sum_{k=1}^{N} \int^{\mu(Q_k) }_{\mu(Q_{k-1})}  t^{\alpha_p-1} dt   \\
& \lesssim \mu(Q^0_\beta)^{\alpha_p} \| g \|_{\Lambda_{1,+}^{\D}(\alpha_p)}.
 \end{align*}
Similarly, if $ Q_\beta^0 \subset Q $, we have
$$ | g_Q - g_{Q^0_\beta} | \lesssim \mu(Q)^{\alpha_p} \| g \|_{\Lambda_{1,+}^{\D}(\alpha_p)}. $$

By Theorem \ref{dyasys}, there exists a ball $ B $, with center $c_B$ and radius $r$, such that $ Q \subset B $ and $ \mu(B) \lesssim \mu(Q) $. 
  
 If $ Q^0_\beta \subset Q $ and $ r > \frac{d(O, c_B)}{2A_0} + 1 $, for any $ x \in B(O, r) $, we have $ d(c_B, x) \leq A_0( d(c_B, O) + d(O, x) ) < (2A_0^2+A_0) r $. Then $ \mu(Q) \gtrsim \mu(B) \gtrsim C_{\mu}^{ -(2A_0^2+A_0) } \mu \sk{ B(O, r) } \gtrsim 1 $. Similarly, we also have $ d(z_{\beta}^0, O) < (2A_0^2+A_0) r $ and $ \mu\lk{B\sk{O,d(z^0_\beta,O)}} \lesssim \mu(B) \lesssim \mu(Q) $. Thus
\begin{align*}
 \frac{|g_Q|}{\mu(Q)^{\alpha_p+1}} \sk{ \int_Q |h(x)-h_Q| dx } & \lesssim \frac{|g_Q-g_{Q_{\beta}^0}|+ |g_{Q_{\beta}^0}-g_{Q^0}| + |g_{Q^0}| }{\mu(Q)^{\alpha_p}} \cdot \| h \|_{ L^{\infty}(\Omega) } \\
  & \lesssim \frac{ \mu(Q)^{\alpha_p} +\mu\lk{B\sk{O,d(z^0_\beta,O)}}^{\alpha_p}+1  }{ \mu(Q)^{\alpha_p} } \cdot \| g \|_{\Lambda_{1,+}^{\D}(\alpha_p)}  \| h \|_{L^{\infty}(\Omega)} \\
 & \lesssim \| g \|_{\Lambda_{1,+}^{\D}(\alpha_p)}  \| h \|_{L^{\infty}(\Omega)}. 
\end{align*}

If $ Q^0_\beta \subset Q $ and $ r \leq \frac{d(O, c_B)}{2A_0} +1 $, for any $ x \in B $, we have $ d(x,O) \leq A_0 (d(O,c_B) + r) $, then $ \mu(Q) \lesssim \mu \sk{ B(O, A_0( d(O, c_B)+r ) } $. Thus
\begin{align*}
\frac{|g_Q|}{\mu(Q)^{\alpha_p+1}} \sk{ \int_Q |h(x)-h_Q| dx }  & \lesssim \frac{|g_Q-g_{Q_{\beta}^0}|+ |g_{Q_{\beta}^0}-g_{Q^0}| + |g_{Q^0}| }{\mu(Q)^{\alpha_p+1}} \frac{\mu( B )^{\alpha_p+1}}{ \sk{ 1+\mu[B(O,1+r+d(O,c_B))]  }^{\alpha_p} } \\
& \lesssim \frac{ \sk{ \mu(Q)^{\alpha_p} +\mu\lk{B\sk{O,d(z^0_\beta,O)}}^{\alpha_p}+1 } \| g \|_{\Lambda_{1,+}^{\D}(\alpha_p)}  }{ \sk{ 1+\mu[B(O,1+r+d(O,c_B))]  }^{\alpha_p} } \\
& \lesssim \| g \|_{\Lambda_{1,+}^{\D}(\alpha_p)}.
\end{align*}

If $Q \subset Q^0_\beta $, from Theorem \ref{dyasys}, we can choose $ C_0 $ sufficiently small such that $ C_1 = 2A_0C_0 \leq 1$, then $ r \leq C_1 \leq \frac{d(c_B,O)}{2A_0}+1 $. For any $ x \in Q^0_\beta $, we have $ d(O ,x) \leq A_0( d(O, z^0_\beta) + C_1 ) $. Then
$$ \mu(Q^0_\beta) \lesssim \mu \lk{ B\sk{ O, A_0( d(O, z_\beta^0) + C_1 ) } }. $$ 
By a calculation similar to the one presented above, we get the desired result. 

Combining the above estimates, we finish our proof for $ 0<p<1 $. The case for $ p=1 $ is similar.
\end{proof}

\begin{remark}
Note that in Theorem \ref{thm7.4}, the dyadic system $\D$ is arbitrary. Then from Theorem \ref{Lips} and (\ref{bmo}),  we conclude that $ \H(\alpha_p) $ is a space of pointwise multipliers of $ \L_{\alpha_p}(\mu) $ and $ \H(0) $ is a space of pointwise multipliers of $ BMO(\mu) $.
\end{remark}

\subsection{Bilinear decompositions}\label{bd_hs} Assume $ f \in H^1_{\D}(\mu), g \in BMO^{\D}_+(\mu) $ or $f\in H^p_{ \D}(\mu), g\in \Lambda^\D_{1,+}(\alpha_p) $, $0<p<1$. 

Denote by $ \mathcal{H}^p_{\D, \text{fin}} (\mu) \ (0<p\leq 1)$ the linear space consisting of all functions which can be written as a finite sum of simple $(p, \8)$-atoms. Thus if $f\in \mathcal{H}^p_{\D, \text{fin}} (\mu)$, $f$ is locally supported, $f\in L^1(\Omega)\cap L^\8(\Omega)$ and $\int_{\Omega}fd\mu=0$. Note that $ \mathcal{H}^p_{\D, \text{fin}} (\mu)$ is dense in $H^p_{\D}(\mu)$ with respect to the norm $\|\cdot \|_{H^p_\D(\mu)}$. 

In the following, we shall only consider the case where $f\in  \mathcal{H}^p_{\D, \text{fin}} (\mu)$. Then $f\cdot g\in L^1(\Omega)$, and we can write
\begin{equation}\label{dec1}
 f\cdot g  = \Pi_1(f,g) + \Pi_2(f,g) + \Pi_3(f,g), 
\end{equation}
where
$$ \Pi_1(f,g) : =  \sum_{k=1}^{\infty} d_k f d_k g, \quad \Pi_2(f,g) : = \sum_{k=1}^{\infty} f_{k-1}d_k g  \quad \text{and} \quad \Pi_3(f,g) : =  \sum_{k=1}^{\infty} g_{k-1}d_k f. $$

\begin{thm}\label{thm7.5} We have the following:
\begin{enumerate}
\item $\Pi_1$ is a bilinear bounded operator from $H^1_{\D}(\mu)\times BMO^{\D}_+(\mu)$ to $L^1(\Omega)$,  and \\
when $0<p<1$, $\Pi_1$ is a bilinear bounded operator from $H^p_{\D}(\mu)\times \Lambda_{1,+}^{\D}(\alpha_p)$ to $L^1(\Omega)$.
\item $\Pi_2$ is a bilinear bounded operator from $H^1_{\D}(\mu)\times BMO^{\D}_+(\mu)$ to $H^1_\D(\mu)$,  and \\
when $0<p<1$, $\Pi_2$ is a bilinear bounded operator from $H^p_{\D}(\mu)\times \Lambda_{1,+}^{\D}(\alpha_p)$ to $H^1_\D(\mu)$.
\item $\Pi_3$ is a bilinear bounded operator from $H^1_{\D}(\mu)\times BMO^{\D}_+(\mu)$ to $H^{\Psi_1}_\D(\mu)$,  and \\ when $0<p<1$, $\Pi_3$ is a bilinear bounded operator from $H^p_{\D}(\mu)\times \Lambda_{1,+}^{\D}(\alpha_p)$ to $H^{\Psi_p}_\D(\mu)$.
\end{enumerate}
\end{thm}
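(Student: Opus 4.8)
The plan is to follow the proofs of Theorems~\ref{Theorem A} and~\ref{Theorem C} line by line, now for the two-sided dyadic filtration $\{\F_k\}_{k\in\ent}$ attached to $\D$, reducing every analytic step to three ingredients already in place: the atomic identification $H^p_\D(\mu)=H^{p}_{\rm at,\D}(\mu)$ from Proposition~\ref{decHm}; the Lipschitz/$BMO$ characterization of Theorem~\ref{thm4.5} (whose proof only localizes to a single cube and so is valid verbatim for dyadic martingales on $\Omega$); and the generalized H\"older inequalities of Lemmas~\ref{Holder1} and~\ref{Holder2}, which take over the role played by Lemma~\ref{HolderP} in the probability setting. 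Since $\mathcal{H}^p_{\D, \text{fin}}(\mu)$ is dense in $H^p_\D(\mu)$, it suffices to prove the three norm estimates for $f\in\mathcal{H}^p_{\D, \text{fin}}(\mu)$, for which \eqref{dec1} is a \emph{finite} algebraic identity and $f\cdot g\in L^1(\Omega)$; the bilinear operators on the full product space and their continuity then follow by the same limiting argument as in Section~\ref{case1}. Throughout I expand $f=\sum_j\lambda_j a^j$ into simple $(p,\infty)$-atoms with $\supp(a^j)\subset Q_{n_j}\in\D$, and localize each paraproduct using that $d_k a^j$ is supported in $Q_{n_j}$ for $k>n_j$.

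For $\Pi_1$ the $L^1$ bound follows from the dyadic dualities $(H^1_{\rm at,\D}(\mu))^*=BMO^\D(\mu)$ and $(H^p_{\rm at,\D}(\mu))^*=\Lambda_1^\D(\alpha_p)$ recorded in Section~\ref{section6}, exactly as in \S\ref{Pi_1_est} and Section~\ref{case2}; for $p=1$ one may instead localize $\Pi_1(a^j,g)=\sum_{k>n_j}\mathbf 1_{Q_{n_j}}d_k a^j\,d_k g$ and use Cauchy--Schwarz with the atom size condition and the $BMO^\D$-bound on $\ce_{n_j}\big(\sum_{k>n_j}|d_kg|^2\big)$. For $\Pi_2$ I reproduce the square-function estimate $S(\Pi_2(a^j,g))\le (a^j)^{*}\big(\sum_{k>n_j}\mathbf 1_{Q_{n_j}}|d_kg|^2\big)^{1/2}$ followed by Cauchy--Schwarz; the atom size condition combined with $\ce_{n_j}\big(\sum_{k>n_j}|d_kg|^2\big)=\ce_{n_j}|g-g_{n_j}|^2$ and the $BMO^\D$/$\Lambda_2^\D(\alpha_p)$ control then gives $\|\Pi_2(a^j,g)\|_{H^1_\D(\mu)}\lesssim\|g\|_{BMO^\D_+(\mu)}$ (resp.\ $\lesssim_p\|g\|_{\Lambda_{1,+}^\D(\alpha_p)}$), where the exponent identity $1+\alpha_p-\tfrac1p=0$ makes the powers of $\mu(Q_{n_j})$ cancel exactly as in \eqref{pip-2}. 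Summing over atoms---linearly when $p=1$ and through the $p$-subadditivity of $\|S(\cdot)\|_p^p$ when $0<p<1$---yields boundedness into $H^1_\D(\mu)$.

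The term $\Pi_3$ is the crux. Since $d_k(\Pi_3(f,g))=g_{k-1}d_kf$, I split pointwise
\begin{equation*}
S(\Pi_3(f,g))\ \le\ |g|\,S(f)\ +\ \Big(\sum_{k}|g_{k-1}-g|^2\,|d_kf|^2\Big)^{1/2}\ =:\ P+R .
\end{equation*}
For the principal term $P$, Lemma~\ref{Holder1} gives $\|P\|_{L^{\Psi_1}(\Omega)}\lesssim\|S(f)\|_1\,\|g\|_{BMO^\D_+(\mu)}=\|f\|_{H^1_\D(\mu)}\|g\|_{BMO^\D_+(\mu)}$, and for $0<p<1$ Lemma~\ref{Holder2} gives $\|P\|_{L^{\Psi_p}(\Omega)}\lesssim\|S(f)\|_p\,\|g\|_{\Lambda_{1,+}^\D(\alpha_p)}$; this is exactly what the Musielak weights $\Psi_1,\Psi_p$ and the $+$-norms were built to handle. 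For the commutator term $R$, I estimate on each atom: by Theorem~\ref{thm4.5} together with the telescoping-of-averages bound used in the proof of Theorem~\ref{thm7.4}, one has $|g_{k-1}-g|\lesssim\mu(Q_{n_j})^{\alpha_p}\|g\|_{\Lambda_{1,+}^\D(\alpha_p)}$ pointwise on $Q_{n_j}$, so that $R\lesssim\mu(Q_{n_j})^{\alpha_p}\|g\|_{\Lambda_{1,+}^\D(\alpha_p)}\,S(a^j)\lesssim\mu(Q_{n_j})^{-1}\|g\|_{\Lambda_{1,+}^\D(\alpha_p)}$ on $Q_{n_j}$ (using $\alpha_p-\tfrac1p=-1$ and $S(a^j)\lesssim\mu(Q_{n_j})^{-1/p}$). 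When $p=1$ this is an $L^1$ estimate and the embedding $L^1(\Omega)\subset L^{\Psi_1}(\Omega)$ finishes it; when $0<p<1$ the surplus factor $\mu(Q_{n_j})^{\alpha_p}$ is absorbed by the position weight $1+\mu(B(O,d(\cdot,O)))$ in $\Psi_p$, since for a cube far from $O$ one has $\mu(Q_{n_j})\approx\mu(B(O,d(x,O)))$ and hence $\int_{Q_{n_j}}\Psi_p(x,R)\,d\mu\lesssim\|g\|_{\Lambda_{1,+}^\D(\alpha_p)}^p$; summation over atoms in the $p$-subadditive Luxembourg quasi-norm then gives boundedness into $H^{\Psi_p}_\D(\mu)$.

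\textbf{Main obstacle.} The difficulty concentrates in $\Pi_3$ and is structural: unlike the probability setting of Theorem~\ref{Theorem C}, where $\|g^*\|_\infty\lesssim_p\|g\|_{\Lambda_1(\alpha_p)}$ forces $\Pi_3$ back into $H^p$, here $g$ is genuinely unbounded because $\mu$ may be infinite and the filtration is two-sided, so $\Pi_3$ can land only in the larger Musielak--Orlicz space $H^{\Psi_p}_\D(\mu)$ (resp.\ $H^{\Psi_1}_\D(\mu)$). The real content is therefore to check that the growth of $g$ away from the base point $O$ is matched \emph{exactly} by the weights defining $\Psi_1,\Psi_p$: the generalized H\"older inequalities of Lemmas~\ref{Holder1} and~\ref{Holder2} do this for the principal part $P$, while the scale cancellation $\alpha_p-\tfrac1p=-1$ combined with the doubling comparison $\mu(Q)\approx\mu(B(O,d(\cdot,O)))$ does it for the commutator part $R$. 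Once these fits are verified, convergence of the paraproduct series and the passage from $\mathcal{H}^p_{\D, \text{fin}}(\mu)$ to $H^p_\D(\mu)$ are routine.
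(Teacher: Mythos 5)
Your overall architecture coincides with the paper's: the paper's proof of Theorem \ref{thm7.5} consists precisely of rerunning the atomic argument of Theorem \ref{Theorem A}, with Lemmas \ref{Holder1} and \ref{Holder2} replacing Lemma \ref{HolderP}, and your treatment of $\Pi_1$, of $\Pi_2$, and of the principal part $P=|g|\,S(f)$ of $\Pi_3$ is correct. However, your commutator term $R$ rests on the pointwise bound $S(a^j)\lesssim \|a^j\|_\infty\leqslant \mu(Q_{n_j})^{-1/p}$, and this is false: the martingale square function of a bounded function need not be bounded. Only the maximal function obeys $\|(a^j)^*\|_\infty\leqslant\|a^j\|_\infty$, which is exactly why the paper's proofs place $(a^j)^*$ (never $S(a^j)$) in $L^\infty$, and estimate $S(a^j)$ only in $L^2$ via $\|S(a^j)\|_2=\|a^j\|_2$. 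For a counterexample on the standard dyadic system of $[0,1)$: with $h_k:=1_{[0,2^{-k})}-1_{[2^{-k},2^{-k+1})}$, the function $a:=\sum_{k\geqslant 1}(-1)^k k^{-1/2}h_k$ is bounded, has mean zero and is supported on $[0,1)$ (hence is a multiple of an atom), yet $S(a)^2=\sum_{k\leqslant m}k^{-1}\approx \log m$ on $[2^{-m},2^{-m+1})$, so $S(a)\notin L^\infty$. Moreover, in the case $p=1$ your argument also needs $|g_{k-1}-g|\lesssim\|g\|_{BMO^{\D}_+(\mu)}$ pointwise on $Q_{n_j}$, which fails as well: Theorem \ref{thm4.5} requires $\alpha_p>0$, and $BMO$ functions are not locally bounded (John--Nirenberg gives only exponential integrability). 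The final absorption step, which relies on $\mu(Q_{n_j})\approx\mu(B(O,d(x,O)))$, is also unjustified for small cubes far from $O$.

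The repair is the paper's $L^2$-based handling of the terms $I_1$ and $I_4$ in \S\ref{pi}, and it makes the absorption step unnecessary. Since $\Psi_p(x,t)\leqslant t$ and $\Psi_1(x,t)\leqslant t$, one has $\|h\|_{L^{\Psi_p}(\Omega)}\leqslant\|h\|_1$, so an $L^1$ bound on $R$ suffices. Writing $Q=Q_{n_j}$, $a=a^j$, and noting that $R$ is supported on $Q$, Cauchy--Schwarz and the tower property give
\begin{equation*}
\int_\Omega R\, d\mu \leqslant \mu(Q)^{\frac12}\Big(\int_\Omega \sum_{k>n_j}|d_ka|^2\,\ce_k\big(|g-g_{k-1}|^2\big)\,d\mu\Big)^{\frac12}
\lesssim \mu(Q)^{\frac12+\alpha_p}\,\| g \|_{\Lambda_2^{\D}(\alpha_p)}\,\| a \|_2
\leqslant \| g \|_{\Lambda_2^{\D}(\alpha_p)},
\end{equation*}
where one uses $\ce_k\big(|g-g_{k-1}|^2\big)\lesssim \mu(Q)^{2\alpha_p}\|g\|^2_{\Lambda_2^{\D}(\alpha_p)}$ on $Q$ for $k>n_j$ (for $p=1$, $\lesssim\|g\|^2_{BMO^{\D}(\mu)}$, which holds by regularity of dyadic filtrations on doubling spaces together with dyadic John--Nirenberg), as well as $\|a\|_2\leqslant\mu(Q)^{\frac12-\frac1p}$ and the cancellation $1+\alpha_p-\frac1p=0$. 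Summation over atoms then proceeds through $\ell^p\subset\ell^1$ exactly as you indicate, and the rest of your argument stands.
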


\begin{proof} For $\Pi_1$ and $\Pi_2$, we can argue as in the corresponding part of the proof of Theorem \ref{Theorem A}. As for $\Pi_3$, we can also argue as in the corresponding part of the proof Theorem \ref{Theorem A}, where in the homogeneous setting one needs to apply Lemma \ref{Holder1} and Lemma \ref{Holder2}. We omit the details.
\end{proof}

\begin{rem} For $\Pi_1$ and $\Pi_2$, the condition $H^1_{\D}(\mu)\times BMO^{\D}_+(\mu)$ and $H^p_{\D}(\mu)\times \Lambda_{1,+}^{\D}(\alpha_p)$ can be in fact replaced by $H^1_{\D}(\mu)\times BMO^{\D}(\mu)$ and $H^p_{\D}(\mu)\times \Lambda_{1}^{\D}(\alpha_p)$, respectively.
\end{rem}

\section{Applications to Homogeneous spaces}\label{section8}
In the first part of this section we show that $ H^{\Psi_p}_\D(\mu) $ admits an atomic decomposition for $0<p<1$, which allows us to integrate several adjacent dyadic systems on homogeneous spaces. 

For a given dyadic system $ \D$ on $\Omega$, we define the dyadic $ H^{\Psi_p}_{\rm at,\D} $-atom as follows.

\begin{definition} A measurable function $a$ is said to be an $H^{\Psi_p}_{\rm at,\D}$-atom if
\begin{enumerate}[(i)]
\item $ \supp(a) \subset Q $ where $ Q \in \D $ is a cube;
\item $\int_{\Omega}a d\mu = 0$;
\item $ \| a\|_{\infty} \leqslant \| 1_Q \|_{L^{\Psi_p}(\Omega)}^{-1} $.
\end{enumerate}
\end{definition}
The atomic dyadic martingale Musielak--Orlicz Hardy spaces $H^{\Psi_p}_{\rm at,\D}(\mu) \ (0<p< 1)$ are defined in a way analogous to \eqref{De1} and \eqref{De2}. We first introduce the space $ BMO_{\Psi_p}^\D(\mu) $, which is a subspace of continuous linear functionals on finite sums of atoms.

\begin{definition} A locally integrable function $ g $ is said to be a dyadic $ BMO_{\Psi_p}^\D(\mu) $ function associated with a dyadic system $ \D $ if
$$ \| g \|_{BMO_{\Psi_p}^\D(\mu)} := \sup_{k \in \ent}\sup_{Q \in \F_k} \frac{1}{\| 1_{Q} \|_{L^{\Psi_p}(\Omega)}} \int_Q |g(x)-g_k(x)| dx  < \infty. $$
\end{definition}

Then we define the atomic Musielak--Orlicz martingale Hardy spaces $ H^{\Psi_p}_{\rm at,\D}(\mu) $ as follows:
\begin{multline*}
H^{\Psi_p}_{\rm at,\D}(\mu) := \\
\left\{ f \in \sk{BMO_{\Psi_p}^\D(\mu)}^* : \ f = \sum_{i=0}^{\infty} \lambda_i a_{i}, \text{ where $ a_{i} $ is an $ H^{\Psi_p}_{\rm at,\D}(\mu) $-atom supported on a cube $ Q_i. $} \right\},
\end{multline*}
where
$$ \sum_{i=0}^{\infty} \int_{Q_i}\Psi_p(x,|\lambda_i|\| a_{i} \|_{\infty})d\mu < \infty. $$
Moreover,
$$ \| f \|_{H^{\Psi_p}_{\rm at,\D}(\mu)} := \inf\lk{ \rho>0 : \sum_{i=0}^{\infty} \int_{Q_i}\Psi_p(x,\rho^{-1}|\lambda_i|\| a_{i} \|_{\infty})d\mu \leqslant 1} $$

Arguing as in \cite{XJY}, one can show that for $0<p< 1$
\begin{equation}\label{atp}
H^{\Psi_p}_{\D}(\mu) = H^{\Psi_p}_{\rm at,\D}(\mu).
\end{equation}

We shall now introduce the atomic Musielak--Orlicz Hardy spaces $ H^{\Psi_p}_{\rm at}(\mu) \ (0<p\leq 1)$ on the homogeneous space $\Omega$. First, we present the definition of atoms for $ H^{\Psi_p}_{\rm at}(\mu) $.

\begin{definition}
A measurable function $a(x)$ is said to be an $H^{\Psi_p}_{\rm at}(\mu)$-atom if
\begin{enumerate}[(i)]
\item $ \supp(a) \subset B $ where $ B \subset \Omega $ is a ball;
\item $\int_{\Omega}a d\mu = 0$;
\item $ \| a \|_{\infty} \leqslant \| 1_B \|_{L^{\Psi_p}(\Omega)}^{-1} $.
\end{enumerate}
\end{definition}

\begin{definition}
A locally integrable function $ g $ is said to be a $ BMO_{\Psi_p}(\mu) $ function  if
$$
\| g \|_{BMO_{\Psi_p}(\mu)} := \sup_{B} \frac{1}{\| 1_{B} \|_{L^{\Psi_p}(\Omega)}} \int_B |g(x)-g_B|dx < \infty,
$$
where $B$ runs over all balls in $ \Omega $.
\end{definition}
	
\begin{definition}
The atomic Musielak--Orlicz Hardy spaces $H^{\Psi_p}_{\rm at} (\mu)\ (0<p\leq 1)$ are defined as follows:
\begin{multline*}
H^{\Psi_p}_{\rm at} (\mu) := \\
\left\{ f \in \sk{BMO_{\Psi_p}(\mu)}^* : f = \sum_{i=0}^{\infty} \lambda_i a_{i}, \text{ where $ a_{i} $ is an $ H^{\Psi_p}_{\rm at}(\mu) $-atom supported on a ball $B_i$} \right\}, 
\end{multline*}
where
$$ \sum_{i=0}^{\infty} \int_{B_i}\Psi_p(x,|\lambda_i|\| a_{i} \|_{\infty})d\mu < \infty. $$
Moreover,
$$ \| f \|_{H^{\Psi_p}_{\rm at}(\mu)} := \inf\lk{ \rho>0 : \sum_{i=0}^{\infty} \int_{B_i}\Psi_p(x,\rho^{-1}|\lambda_i|\| a_{i} \|_{\infty})d\mu \leqslant 1} . $$	
\end{definition}

Let $\D^t \ (1\leq t\leq K)$ be the adjacent systems of Theorem \ref{kdya}. By arguing as in the proof of Theorem \ref{Hardy}, we have the following:
\begin{lemma}\label{5.10} For $0<p< 1$,
$ H^{\Psi_p}_{\rm at}(\mu) = H^{\Psi_p}_{\rm at, \D^1} (\mu)+ H^{\Psi_p}_{\rm at, \D^2}(\mu) + \cdots + H^{\Psi_p}_{\rm at, \D^K}(\mu) $.
\end{lemma}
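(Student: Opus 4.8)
The plan is to mimic the two--inclusion, atom--by--atom comparison used in the proof of Theorem \ref{Hardy}, replacing the role of the size condition $\|a\|_\infty\le\mu(B)^{-1/p}$ by the Musielak--Orlicz normalisation $\|a\|_\infty\le\|1_B\|_{L^{\Psi_p}(\Omega)}^{-1}$. The two geometric facts that drive the argument are exactly those used before: by Theorem \ref{kdya} every ball $B$ is contained in a dyadic cube $Q\in\D^t$ for some $t\in\{1,\dots,K\}$ with $\mu(Q)\lesssim\mu(B)$, and by Theorem \ref{dyasys} every dyadic cube $Q$ is contained in a ball $B$ with $\mu(B)\lesssim\mu(Q)$. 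Thus, as sets, atoms for $H^{\Psi_p}_{\rm at}(\mu)$ and for the $H^{\Psi_p}_{\rm at,\D^t}(\mu)$ are interchangeable up to passing between a ball and a comparable cube; the only genuinely new point is to check that the \emph{size normalisations} transform compatibly, i.e.
\begin{equation*}
\|1_B\|_{L^{\Psi_p}(\Omega)}\approx\|1_Q\|_{L^{\Psi_p}(\Omega)}\qquad\text{whenever }B\subseteq Q\ \ (\text{or }Q\subseteq B)\ \text{ and }\mu(B)\approx\mu(Q).
\end{equation*}

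First I would prove this norm equivalence. One inequality is free: if $B\subseteq Q$ then $1_B\le 1_Q$ pointwise, so monotonicity of the Luxembourg quasi-norm gives $\|1_B\|_{L^{\Psi_p}}\le\|1_Q\|_{L^{\Psi_p}}$. For the reverse I would unravel $\|1_Q\|_{L^{\Psi_p}}=\inf\{\lambda>0:\int_Q\Psi_p(x,\lambda^{-1})\,d\mu\le1\}$ and compare $\int_Q\Psi_p(x,\lambda^{-1})\,d\mu$ with $\int_B\Psi_p(x,\lambda^{-1})\,d\mu$. Here the difficulty is that $\Psi_p(x,t)$ carries the location--dependent weight $1+\mu(B(O,d(x,O)))$ from \eqref{weight}, so unlike in the $H^p$ setting the integrand is not constant in $x$. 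I would control its oscillation on $Q$ by splitting into the case where $Q$ is far from $O$ (say $d(c_B,O)\ge 2A_0 r$, with $c_B,r$ the centre and radius of $B$), where the quasi-triangle inequality forces $d(x,O)\approx d(c_B,O)$ for all $x\in Q$ so that $\Psi_p(x,\cdot)\approx\Psi_p(y,\cdot)$ on $Q$ and the estimate collapses to the measure comparison $\mu(Q)\approx\mu(B)$; and the case where $Q$ is close to $O$, where the doubling property \eqref{double} bounds $\mu(B(O,d(x,O)))$ over $x\in Q$ by $\mu(B(O,r))$ and a direct computation again reduces matters to $\mu(Q)\approx\mu(B)$.

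Granting the equivalence, I would close both inclusions as in Theorem \ref{Hardy}. Given an $H^{\Psi_p}_{\rm at}(\mu)$--atom $a$ supported on $B$, choose $t$ and $Q\in\D^t$ with $B\subseteq Q$ and $\mu(Q)\lesssim\mu(B)$; then $\supp a\subseteq Q$, $\int_\Omega a\,d\mu=0$, and $\|a\|_\infty\le\|1_B\|_{L^{\Psi_p}}^{-1}\lesssim\|1_Q\|_{L^{\Psi_p}}^{-1}$, so a fixed constant multiple of $a$ is an $H^{\Psi_p}_{\rm at,\D^t}(\mu)$--atom. Reassembling a decomposition $f=\sum_i\lambda_i a_i$ this way and using that $\Psi_p$ is of uniformly lower type $p$ and upper type $1$, so that $\Psi_p(x,ct)\approx_c\Psi_p(x,t)$ as in \eqref{Psic}, the defining sum $\sum_i\int_{Q_i}\Psi_p(x,\rho^{-1}|\lambda_i|\|a_i\|_\infty)\,d\mu$ is preserved up to constants, giving $H^{\Psi_p}_{\rm at}(\mu)\subseteq\sum_{t=1}^K H^{\Psi_p}_{\rm at,\D^t}(\mu)$ with control of quasi-norms. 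The reverse inclusion is symmetric, using Theorem \ref{dyasys} to replace each dyadic atom's supporting cube by a comparable ball.

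The main obstacle, and the only place where the argument departs from Theorem \ref{Hardy}, is the norm equivalence of the indicators: because $\Psi_p$ is a genuine Musielak--Orlicz function whose growth depends on $d(x,O)$, the clean scaling $\mu(B)^{-1/p}\approx\mu(Q)^{-1/p}$ available for $H^p$ is unavailable, and one must instead quantify the variation of the weight $1+\mu(B(O,d(\cdot,O)))$ across $Q$. The near--$O$ regime is the delicate one, since there $d(x,O)$ need not be comparable to a single scale; the doubling condition \eqref{double}, together with the fact that $Q$ lies in a ball of radius $\lesssim r$, is what keeps the weight within bounded factors and lets the measure comparison finish the job.
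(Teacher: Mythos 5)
Your overall architecture is the same as the paper's: both reduce the lemma to an atom-by-atom comparison driven by Theorems \ref{kdya} and \ref{dyasys}, and both isolate as the one genuinely new ingredient a comparison of Luxembourg norms of indicators, $\|1_Q\|_{L^{\Psi_p}(\Omega)}\lesssim \|1_B\|_{L^{\Psi_p}(\Omega)}$ for a ball $B$ and a comparable cube $Q$ (this is exactly what the paper's estimate \eqref{Ball} encodes, phrased there for $B$ and a dilate $DB$), proved by a dichotomy according to the distance of the support from the base point $O$. Your far-from-$O$ case is handled exactly as in the paper; note only that the threshold must scale with the dilation constant: if $Q\subseteq C'B$ you need $d(c_B,O)\gtrsim A_0C'r$ rather than $2A_0r$, or else points of $Q\setminus B$ may still be arbitrarily close to $O$ and the weight fails to be comparable across $Q$.

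The near-$O$ case, however, contains a genuine gap, and it is precisely where the real work in the paper's proof lies. Your mechanism there --- the weight $1+\mu(B(O,d(x,O)))$ stays ``within bounded factors'' on $Q$, after which ``the measure comparison $\mu(Q)\approx\mu(B)$ finishes the job'' --- is not correct: in the near regime the weight oscillates between values $\approx 1$ (for $x$ close to $O$) and $\approx 1+\mu(B(O,r))$, and $\mu(B(O,r))$ is unbounded a priori (take $B$ centred at $O$ with huge radius). Moreover, containment plus comparable measure plus an upper bound $W_0$ on the weight genuinely does not imply the norm comparison: if $B$ is the union of a tiny-measure set of weight $0$ and a unit-measure set of weight $W_0$, while $Q$ adjoins to $B$ a unit-measure set of weight $0$, then $\|1_B\|_{L^{\Psi_p}}\approx (1+W_0)^{-(1-p)/p}$ whereas $\|1_Q\|_{L^{\Psi_p}}\approx 1$, so the ratio blows up as $W_0\to\infty$. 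What rescues the geometric situation is structure your sketch never invokes: in the near regime all the balls $B(O,d(x,O))$, $x\in Q$, lie in a fixed dilate of $B$, so the weight is at most $C\mu(B)$; feeding this into the normalisation $\int_B\Psi_p(x,t)\,d\mu\le 1$ gives $t\mu(B)\le 1+[Ct(1+\mu(B))]^{1-p}$, and because the exponent $1-p$ is strictly less than $1$ this inequality self-improves to $t\mu(B)\lesssim_p 1$; only then does the trivial bound $\int_Q\Psi_p(x,t)\,d\mu\le t\mu(Q)\lesssim t\mu(B)$ close the argument. This self-improving inequality (run with $t=1$ in the paper, yielding $\mu(B)\le C(D,p,A_0,C_\mu)$ in the case $d\le 2A_0Dr$) is the heart of the proof of \eqref{Ball}; without it, the near case does not reduce to the measure comparison, so as written your proposal would fail exactly at the step you yourself identify as the delicate one.
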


\begin{proof}

It suffices to show that any dyadic $ H^{\Psi_p}_{\D^t} $-atom  $ a $ is a constant multiple of an $ H^{\Psi_p}(\mu) $-atom, and any $ H^{\Psi_p}(\mu) $-atom $ b $ is a constant multiple of a dyadic $ H^{\Psi_p}_{\D^t} $-atom.
	
If $B:= B(x_0,r)$, then denote the ball $ B(x_0,Dr) $ by $DB$ for $D\geq 1$. Denote $ d: =d(x_0, O) $. In what follows, $C(D,p,A_0,C_\mu)$ denotes a constant that depends on $D,p,A_0,C_\mu$ and may differ from line to line.
We first show that if  
$$ \int_{B}\frac{1}{ 1+[1 + \mu(B(O,d(x,O)))]^{1-p} } d\mu(x) = 1, $$ then
\begin{equation}\label{Ball}
\int_{DB}\frac{1}{ 1+[1 + \mu(B(O,d(x,O)))]^{1-p} } d\mu(x) \leqslant C(D,p,A_0,C_\mu) .
\end{equation}
	
Notice that
\begin{align*}
1 &= \int_{B}\frac{1}{ 1+[1 + \mu(B(O,d(x,O)))]^{1-p} } d\mu(x) \notag \\
& \geqslant \frac{\mu(B)}{\sup\limits_{x\in B} \lk{ 1+[1 + \mu(B(O,d(x,O)))]^{1-p}} } \notag\\
& \geqslant \frac{\mu(B)}{  1+[1 + \mu(B(O,A_0(d+r)))]^{1-p}  },
\end{align*}
which implies
\begin{equation*}
\mu(B) \leqslant 1+[1 + \mu(B(O,A_0(d+r)))]^{1-p}.
\end{equation*}
	
If $ d \leqslant 2A_0Dr $, we have
\begin{align*}
\mu(B) & \leqslant 1 + [1 + \mu(B(O,A_0(2A_0D+1)r))]^{1-p} \\
&\leqslant 1+ \lk{1 + \mu[B(x_0,A_0(A_0+1)(2A_0D+1)r)]}^{1-p} \\
 & \leqslant 1 + \lk{1 + \mk{A_0( A_0 +1 )(2A_0D+1)}^{C_\mu}\mu(B)}^{1-p},
\end{align*}
and thus $ \mu(B) \leqslant C(D,p,A_0,C_\mu). $
	
Then
\begin{equation}\label{dsmall}
\int_{DB}\frac{1}{ 1+[1 + \mu(B(O,d(x,O)))]^{1-p} } d\mu(x) \leqslant  \mu(DB)
\leqslant  D^{C_\mu}\mu(B) \leqslant C(D,p,A_0,C_\mu).
\end{equation}
	
If $d > 2A_0Dr$, then
\begin{align*}
&\int_{DB}\frac{1}{ 1+[1 + \mu(B(O,d(x,O)))]^{1-p} } d\mu(x) \notag\\
\leqslant & \frac{\mu(DB)}{\inf\limits_{x\in DB} \lk{1+[1 + \mu(B(O,d(x,O)))]^{1-p}}} \notag\\
\leqslant & \frac{D^{C_\mu}\mu(B)}{ 1+[1 + \mu(B(O,d/A_0-Dr))]^{1-p} } , \notag\\
\leqslant &  \frac{D^{C_\mu}\lk{ 1+\mu[B\sk{O,[A_0+1/(2D)]d}] }^{1-p} + D^{C_\mu} }{ 1+\lk{1 + \mu[B(O,d/(2A_0))] }^{1-p}  } \notag\\
\leqslant & \frac{D^{C_\mu}\lk{ 1+ \mk{ (2A_0+1/D)A_0 }^{C_\mu}\mu[B(O,d/(2A_0))] }^{1-p} }{ 1+\lk{1 + \mu[B(O,d/(2A_0))] }^{1-p} } + D^{C_\mu}.
\end{align*}
Hence
\begin{equation}\label{dbig}
\int_{DB}\frac{1}{ 1+[1 + \mu(B(O,d(x,O)))]^{1-p} } d\mu(x) \leqslant  C(D,p,A_0,C_\mu)
\end{equation}
	
Combining \eqref{dsmall} with \eqref{dbig}, we get (\ref{Ball}).
	
Assume $ a $ is an $ H^{\Psi_p}(\mu) $-atom supported on $ B $. By Theorem \ref{kdya}, there exist $ t $ and a cube $ Q \in \D^t $ such that $ B \subset Q $ and $  \mathrm{diam}  (Q) \leqslant Cr $, hence $ B\subset Q \subset CB $.
	
Note that $  \supp(a )  \subset Q, \int_Q a (x) d\mu (x)= 0 $ and
$$ \| 1_Q \|_{L^{\Psi_p}(\mu)} \leqslant \| 1_{CB} \|_{L^{\Psi_p}(\mu)} \leqslant C(C,p, A_0,C_\mu)\| 1_{B} \|_{L^{\Psi_p}(\mu)}, $$
which follows from \eqref{Ball}. Thus
$$ \| a \|_{\infty} \leqslant \| 1_{B} \|_{L^{\Psi_p}(\mu)}^{-1} \lesssim \| 1_Q \|_{L^{\Psi_p}(\mu)}^{-1}, $$
which implies $ a $ is a multiple of dyadic $  H^{\Psi_p}_{\D^t} $-atom supported on $Q$.
	
For any $ t =1,2 \cdots,K $, assume $ b $ is a dyadic $ H^{\Psi_p}_{\D^t} $-atom supported on ${Q^k_{\beta}} $. By Theorem \ref{dyasys}, there exists two balls such that $ B(z_{\beta}^k,c_1\delta^k) \subset Q^k_{\beta} \subset B(z_{\beta}^k,C_1\delta^k)$.
	
Thus $ \supp ( b ) \subset B(z_{\beta}^k,C_1\delta^k), \int_{B(z_{\beta}^k,C_1\delta^k)}  b (x) d\mu (x) = 0 $ and
$$ \| 1_{B(z_{\beta}^k,C_1\delta^k)} \|_{L^{\Psi_p}(\mu)} \leqslant C\sk{\frac{C_1}{c_1},p,A_0,C_\mu}\| 1_{B(z_{\beta}^k,c_1\delta^k)} \|_{L^{\Psi_p}(\mu)} \lesssim \| 1_{Q^k_{\beta}} \|_{L^{\Psi_p}(\mu)}, $$
which follows from \eqref{Ball}. Therefore,
$$ \|  b \|_{\infty} \leqslant \| 1_{Q^k_{\beta}} \|_{L^{\Psi_p}(\mu)}^{-1} \lesssim  \| 1_{B(z_{\beta}^k,C_1\delta^k)} \|_{L^{\Psi_p}(\mu)}^{-1} , $$
which implies $  b $ is a multiple of dyadic $H^{\Psi_p}$-atom supported on $B(z_{\beta}^k,C_1\delta^k)$.
\end{proof}

\begin{remark}
In \cite{FMY}, Fu, Ma and Yang defined another kind of Musielak--Orlicz Hardy spaces by grand maximal function and they also proved that these Musielak--Orlicz Hardy spaces are equivalent to $ H^{\Psi_p}_{\rm at}(\mu) $ with respect to the corresponding norms when $p\in (\frac{C_\mu}{C_\mu+1}, 1]$.
\end{remark}

Let $B_1:=B(O, 1)$. Define
$$  \|g\|_{BMO_+(\mu)}:= |g_{B_1}|  + \|g\|_{BMO(\mu)}, \quad \text{for } g\in BMO(\mu), $$
and
$$ \|g\|_{\L_{+, \alpha}(\mu)}:= |g_{B_1}|+ \|g\|_{\L_{ \alpha_p}(\mu)},\quad \text{for } g\in \L_{ \alpha_p}(\mu). $$
Thus $\|\cdot\|_{BMO_+(\mu)}$ and $\|\cdot \|_{\L_{+, \alpha_p}(\mu)}$ are quasi-norms on $BMO(\mu)$ and $\L_{ \alpha_p}(\mu)$, respectively.

\begin{thm} 
Let $0<p<1$ and $f\in H^p_{\rm at}(\mu)$. There exist three linear continuous operators $\Pi^f_1: \L_{\alpha_p}(\mu)\rightarrow L^1(\Omega)$, $\Pi^f_2: \L_{\alpha_p}(\mu)\rightarrow H^1_{\rm at}(\mu)$ and $\Pi^f_3: \L_{\alpha_p}(\mu)\rightarrow H^{\Psi_p}_{\rm at}(\mu)$ such that
$$  f\cdot g= \Pi^f_1(g)+\Pi^f_2(g)+\Pi^f_3(g) \quad \text{for all } g\in \L_{\alpha_p}(\mu), $$
where $\L_{ \alpha_p}(\mu)$ is endowed with the quasi-norm $\|\cdot \|_{\L_{+, \alpha_p}(\mu)}$.
\end{thm}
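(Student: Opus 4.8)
The plan is to reduce the statement to the single–dyadic–system decomposition of Theorem~\ref{thm7.5}, by splitting $f$ across the finitely many adjacent dyadic systems $\D^1,\dots,\D^K$ supplied by Theorem~\ref{kdya}. First I would fix, using Theorem~\ref{Hardy}, a decomposition
$$ f = \sum_{t=1}^{K} f^t, \qquad f^t \in H^p_{\D^t}(\mu), \qquad \sum_{t=1}^{K}\|f^t\|_{H^p_{\D^t}(\mu)} \lesssim \|f\|_{H^p_{\rm at}(\mu)}. $$
Since $f$ is fixed, the right–hand side is a fixed finite constant. On the dual side, Theorem~\ref{Lips} gives $g\in \L_{\alpha_p}(\mu)=\bigcap_{t=1}^{K}\Lambda_2^{\D^t}(\alpha_p)$, so $g$ lies in each $\Lambda_1^{\D^t}(\alpha_p)=\Lambda_2^{\D^t}(\alpha_p)$. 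Comparing the dyadic cube $Q^0\in\D^t$ containing $O$ with the ball $B_1=B(O,1)$ via the Lipschitz estimate of Theorem~\ref{thm4.5}, both being contained in a fixed ball around $O$, yields $|g_{Q^0}|\lesssim |g_{B_1}|+\|g\|_{\L_{\alpha_p}(\mu)}$; combined with the remark following Lemma~\ref{Holder2} this gives the uniform bound $\|g\|_{\Lambda_{1,+}^{\D^t}(\alpha_p)}\lesssim \|g\|_{\L_{+,\alpha_p}(\mu)}$ for every $t$.

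With these preliminaries I would define, for $i=1,2,3$,
$$ \Pi^f_i(g) := \sum_{t=1}^{K} \Pi^{\D^t}_i(f^t,g), $$
where $\Pi^{\D^t}_i$ are the bilinear operators of Theorem~\ref{thm7.5} attached to $\D^t$; each $\Pi^f_i$ is then linear in $g$. Continuity follows term by term: for $\Pi^f_1$ one sums the $L^1$–bounds of Theorem~\ref{thm7.5}; for $\Pi^f_2$ one uses that each summand lies in $H^1_{\D^t}(\mu)$, which embeds continuously into $H^1_{\rm at}(\mu)$ by the case $p=1$ of Theorem~\ref{Hardy}; and for $\Pi^f_3$ one uses the identification $H^{\Psi_p}_{\D^t}(\mu)=H^{\Psi_p}_{\rm at,\D^t}(\mu)$ from \eqref{atp} together with the inclusion $H^{\Psi_p}_{\rm at,\D^t}(\mu)\subset H^{\Psi_p}_{\rm at}(\mu)$ from Lemma~\ref{5.10}. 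In each case the operator norm is $\lesssim \|f\|_{H^p_{\rm at}(\mu)}$, by the bound on $\sum_t\|f^t\|_{H^p_{\D^t}(\mu)}$ above and the control of $\|g\|_{\Lambda_{1,+}^{\D^t}(\alpha_p)}$ by $\|g\|_{\L_{+,\alpha_p}(\mu)}$.

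It remains to verify the identity $f\cdot g=\Pi^f_1(g)+\Pi^f_2(g)+\Pi^f_3(g)$, interpreted distributionally, that is, tested against $h\in\H(\alpha_p)$; note $h\cdot g\in\L_{\alpha_p}(\mu)$ by the pointwise–multiplier property of $\H(\alpha_p)$ (Theorem~\ref{thm7.4} and the subsequent remark). For $f^t$ a finite combination of simple atoms — a class dense in $H^p_{\D^t}(\mu)$ since $\mathcal{H}^p_{\D^t,\text{fin}}(\mu)$ is dense there — the pointwise identity $f^t\cdot g=\sum_i\Pi^{\D^t}_i(f^t,g)$ of \eqref{dec1} holds, and the $\D^t$–duality pairing $\langle h\cdot g,f^t\rangle$ agrees with the global $(\L_{\alpha_p}(\mu),H^p_{\rm at}(\mu))$–pairing because both are realized by the same integration pairing on atoms. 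Summing over $t$ and using $f=\sum_t f^t$ gives
$$ \langle f\times g,h\rangle = \langle h\cdot g,f\rangle = \sum_{t=1}^{K}\langle h\cdot g,f^t\rangle = \sum_{t=1}^{K}\sum_{i=1}^{3}\langle \Pi^{\D^t}_i(f^t,g),h\rangle = \sum_{i=1}^{3}\langle \Pi^f_i(g),h\rangle. $$
I expect this last step to be the main obstacle: justifying the consistency of the several duality pairings and extending the decomposition from finite atomic $f^t$ to general $f^t\in H^p_{\D^t}(\mu)$ by a limiting argument, given that the splitting of $f$ across the $\D^t$ is highly non-unique and the product is only defined in the distributional sense. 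The continuity estimates of the preceding paragraph are precisely what make this limiting passage legitimate.
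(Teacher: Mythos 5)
Your proposal is correct and follows essentially the same route as the paper's own proof: decompose $f=\sum_{t=1}^{K}f^t$ via Theorem \ref{Hardy}, set $\Pi_i^f(g):=\sum_{t=1}^{K}\Pi_i(f^t,g)$, and combine the bounds of Theorem \ref{thm7.5} with the embeddings $H^1_{\D^t}(\mu)\subset H^1_{\rm at}(\mu)$ (Theorem \ref{Hardy}) and $H^{\Psi_p}_{\D^t}(\mu)\subset H^{\Psi_p}_{\rm at}(\mu)$ (Lemma \ref{5.10}), together with the control $\|g\|_{\Lambda_{1,+}^{\D^t}(\alpha_p)}\lesssim\|g\|_{\L_{+,\alpha_p}(\mu)}$. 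Your final paragraph on consistency of the duality pairings and the limiting passage from $\mathcal{H}^p_{\D^t,\text{fin}}(\mu)$ to general $f^t$ is more explicit than the paper, which simply asserts the identity after its reduction to finite atomic sums in \S\ref{bd_hs}, but this is added care within the same argument rather than a different approach.
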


\begin{proof} Let $ f \in H^p_{\rm at}(\mu) $. By Theorem  \ref{Hardy} there exist $ f^t \in H^p_{\D^t}(\mu)\ (t = 1,2,\cdots,K) $ such that $ f = f^1 + f^2 + \cdots + f^K $, and
$$  \sum_{t=1}^{K} \|f^t\|_{H^p_{\D^t}(\mu)}\approx \|f\|_{H^p_{\rm at}(\mu)}. $$
Define $\Pi_i^f(g) :=\sum\limits_{t=1}^{K} \Pi_i(f^t, g)$ for $i=1,2,3$ and $g\in \L_{ \alpha_p}(\mu)$ ($\Pi_i$ defined as in Theorem \ref{thm7.5}). Then
\begin{equation*}
f\cdot g = \Pi_1^f(g) + \Pi_2^f(g) + \Pi_3^f(g).
\end{equation*}

By Theorem \ref{thm7.5}, Theorem \ref{Hardy} and Lemma \ref{5.10}, we have
\begin{align*}
&\| \Pi_1^f(g) \|_1 \lesssim \sum_{t=1}^{K}\| \Pi_1(f^t, g)\|_{1}\lesssim  \sum_{t=1}^K
\| f^t \|_{H^p_{\D^t}(\mu)} \| g \|_{\Lambda_{1,+}^{\D^t}(\alpha_p)} \lesssim  \| f \|_{H^p_{\rm at}(\mu)} \| g \|_{\L_{+, \alpha_p}(\mu)} ,\\
&\| \Pi_2^f(g) \|_{H^1_{\rm at}(\mu)} \lesssim \sum_{t=1}^{K}\| \Pi_2(f^t, g)\|_{H^1_{\D^t}(\mu)}\lesssim  \sum_{t=1}^K
\| f^t \|_{H^p_{\D^t}(\mu)} \| g \|_{\Lambda_{1,+}^{\D^t}(\alpha_p)} \lesssim  \| f \|_{H^p_{\rm at}(\mu)} \| g \|_{\L_{+, \alpha_p}(\mu)},\\
&\| \Pi_3^f(g) \|_{H^{\Psi_p}_{\rm at}(\mu)} \lesssim \sum_{t=1}^K\| \Pi_3(f^t, g)\|_{H^{\Psi_p}_{\D^t}(\mu)}\lesssim \sum_{t=1}^K
\| f^t \|_{H^p_{\D^t}(\mu)} \| g \|_{\Lambda_{1,+}^{\D^t}(\alpha_p)} \lesssim  \| f \|_{H^p_{\rm at}(\mu)} \| g \|_{\L_{+, \alpha_p}(\mu)}.
\end{align*}
which finishes the proof.
\end{proof}

\begin{rem}
If the homogeneous space $(\Omega, \mu)$ satisfies the reverse doubling condition, then Lemma \ref{5.10} holds for $p=1$. Then we conclude the following.

Let $f\in H^1_{\rm at}(\mu)$. There exist three linear continuous operators $\Pi^f_1: BMO(\mu)\rightarrow L^1(\Omega)$, $\Pi^f_2: BMO(\mu)\rightarrow H^1_{\rm at}(\mu)$ and $\Pi^f_3: BMO(\mu)\rightarrow H^{\Psi_1}_{\rm at}(\mu)$ such that
$$  f\cdot g= \Pi^f_1(g)+\Pi^f_2(g)+\Pi^f_3(g)\quad \text{for all } g\in BMO(\mu),  $$
where $BMO(\mu)$ is endowed with the norm $\|\cdot\|_{BMO_+(\mu)}$.
\end{rem}

\bigskip

\subsection*{Acknowledgments} We thank Professor Quanhua Xu for helpful discussions and suggestions. O.B. and Y.Z. would like to express their gratitude to Professor Xu for his kind invitation and hospitality during their visit to Besan\c{c}on in March 2022. 

We would also like to thank Yong Jiao, Guangheng Xie, Dachun Yang, Dejian Zhou for personal communications on their work with us.

\bigskip

\end{document}